\newlength{\defbaselineskip} \setlength{\defbaselineskip}{\baselineskip}
\theoremstyle{plain}
\newtheorem{thm}{Theorem}[section]
\newtheorem{cor}[thm]{Corollary}
\newtheorem{con}[thm]{Conjecture}
\newtheorem{lema}[thm]{Lemma}
\newtheorem{prop}[thm]{Proposition}
\newtheorem{obs}[thm]{Proposition}
\newtheorem{fact}[thm]{Fact}
\theoremstyle{remark}
\newtheorem{rem}[thm]{Remark}
\newtheorem{pr}{Algorithm}
\newtheorem{exm}[thm]{Example}
\theoremstyle{definition}  \newtheorem{df}[thm]{Definition}
\theoremstyle{definition}  %
 \numberwithin{equation}{section}
\def\Z{\mathbb{Z}}
\def\N{\mathbb{N}}
\def\fa{\begin{fact}}
\def\kfa{\end{fact}}
\def\R\mathcal{R}
\def\NZQ{\mathbb}
\def\QQ{{\NZQ Q}}
\def\eb{{\bold e}}
\DeclareMathOperator{\Imi}{Im}
\DeclareMathOperator{\Rel}{Re}
\DeclareMathOperator{\Spec}{Spec}
\def\ob{\begin{obs}}
\def\kob{\end{obs}}
\def\dow{\begin{proof}}
\def\kdow{\end{proof}}
\def\tw{\begin{thm}}
\def\ktw{\end{thm}}
\def\hip{\begin{con}}
\def\khip{\end{con}}
\def\lem{\begin{lema}}
\def\klem{\end{lema}}
\def\ex{\begin{exm}}
\def\prog{\begin{pr}}
\def\kprog{\end{pr}}
\def\wn{\begin{cor}}
\def\kwn{\end{cor}}
\def\uwa{\begin{rem}}
\def\kuwa{\end{rem}}
\def\kex{\end{exm}}
\def\dfi{\begin{df}}
\def\kdfi{\end{df}}
\definecolor{zielony}{rgb}{0.5, 0.9, 0.1}
\definecolor{czerwony}{rgb}{0.9, 0.2, 0.1}
\definecolor{niebieski}{rgb}{0.3, 0.1, 0.9}
\newenvironment{red}{\color{czerwony}}{}
\newcommand{\bred}{\begin{red}}
 \newcommand{\ered}{\end{red}}
\title{Interlacing Ehrhart Polynomials of Reflexive Polytopes}
\author[A.~Higashitani]{Akihiro Higashitani}
 \thanks{The first author is partially supported by JSPS Grant-in-Aid for Young Scientists (B) 26800015}
\address{Department of Mathematics,
Kyoto Sangyo University, Motoyama, Kamigamo, Kita-ku, Kyoto 603-8555, Japan}
\email{ahigashi@cc.kyoto-su.ac.jp}
 \author[M.~Kummer]{Mario Kummer}
 \thanks{}
 \address{Max Planck Institute for Mathematics in the Sciences\\
 Inselstra\ss{}e 22\\
 04103 Leipzig, Germany}
 \email{kummer@mis.mpg.de}
\author[M.~Micha{\l}ek]{Mateusz Micha{\l}ek}
 \thanks{The third author is supported by the Polish National Science Center grant 2013/08/A/ST1/00804}
 \address{Institute of Mathematics of
 Polish Academy of Sciences\\
 ul. \'Sniadeckich 8\\
 00-956 Warszawa, Poland}
 \email{wajcha2@poczta.onet.pl}
\newcommand{\RR}{\mathbb{R}}
\newcommand{\CC}{\mathbb{C}}
\newcommand{\ZZ}{\mathbb{Z}}
\definecolor{Zielony}{RGB}{78,104,18} 
\definecolor{JasnoZielony}{RGB}{195,204,110} 
\keywords{}
\subjclass[2010]{Primary: 26C10 Secondary: 52B20, 12D10, 30C15, 05C31, 33C45, 65H04}
\begin{document}
\begin{abstract}
It was observed by Bump et al.~that Ehrhart polynomials in a special family exhibit properties similar to the Riemann $\zeta$ function. The construction was generalized by Matsui et al.~to a larger family of reflexive polytopes coming from graphs. We prove several conjectures confirming when such polynomials have zeros on a certain line in the complex plane. Our main new method is to prove a stronger property called \emph{interlacing}.
\end{abstract}

\maketitle

\section{Introduction}
The aim of the article is to investigate relations among three classical mathematical objects: graphs, polytopes and polynomials.
There are known constructions that associate to a graph a lattice polytope - \emph{the symmetric edge polytope} \cite{MHNOH, ohsugi2014centrally, ohsugi2012smooth}. Furthermore, to each lattice polytope $P$ one associates the \emph{Ehrhart polynomial} $H_P$ \cite{ehrhart1962geometrie, beck2007computing, bruns2009polytopes, stanley1997enumerative} that computes the number of lattice points in dilations of $P$. This is also the Hilbert polynomial of the normal toric variety $\Spec \CC[C]$, where $C$ is the cone over $P$ \cite{Stks, cox2011toric}. Both constructions are briefly recalled in Section \ref{sec:PPG}. 

Furthermore, the roots of Ehrhart polynomials are also an object of intensive studies \cite{braun2008norm, beck2005coefficients, bey2007notes, higashitani2012counterexamples}.
In case of many graphs these roots have a remarkable property: They lie on a line $R=\{z:\Rel(z)=-\frac{1}{2}\}$. Still, proving that this property holds for a family of graphs is often very hard. One of the first positive results was the case of the complete $(1,n)$-bipartite graphs (trees) proved independently by Kirschenhofer, Peth{\"o} and Tichy \cite[Thm.~3.4]{kirschenhofer1999algebra} and by Bump, Choi, Kurlberg and Vaaler \cite[Thm.~4, Thm.~6]{BCKV}. In the former, the authors studied this family of polynomials in relation to finiteness results on the number of solutions of diophantine equations and in relation to Meixner polynomials.
In the latter this family of polynomials was studied in the context of the local Riemann hypothesis. Indeed, while the polynomials $f$ that we associate to trees are symmetric with respect to $R$, the polynomials $f(-x)$ appear as Mellin's transforms of Laguerre functions and have properties similar to the Riemann $\zeta$ function - cf.~\cite[Ch.~3]{BCKV}. Related results were obtained by Rodriguez-Villegas, who further speculated about relations to Riemann hypothesis \cite[Ch.~4]{rod02}. In fact, the study of functions with similar properties to $\zeta$ (but simpler) and their zero loci goes back even to P{\'o}lya \cite{polya1926bemerkung} and some of his methods apply to particular cases of polynomials we study \cite[p.~4,5]{BCKV}.

While our methods work easily for trees, they extend to many other classes. 
Our new approach is based on the observation that the polynomials we obtain come in families with roots having stronger properties than just belonging to the line $R$. Here we refer to the theory of \emph{interlacing polynomials} \cite{fisk06}, briefly discussed in Section \ref{sec:Interlacing}. This theory has attracted much attention recently since Marcus, Spielman and Srivastava used it to solve the Kadison--Singer problem as well as to show the existence of bipartite Ramanujan graphs of all degrees \cite{intI,intII}. It turns out it is also very useful while studying roots of special Ehrhart polynomials.
One of the examples is a generalization of the theorem for trees presented in Lemma \ref{lem1}. Namely, consider two polynomials associated to complete bipartite graphs $(1,n)$ and $(1,n+1)$.
If we go along $R$ the roots of these two polynomials interchange, i.e.~we never encounter consecutively two roots of one polynomial. The story does not end on graphs. Indeed, other reflexive polytopes exhibit similar properties. In particular, among other families, we present proofs for classical root polytopes of type C \cite{ABHPS} and the duals of Stasheff polytopes \cite{FZ}.

Our general method consists of following steps:
\begin{enumerate}
\item Determine the Ehrhart polynomials for a family of graphs. Here we either rely on known results, or we apply a Gr\"obner degeneration of the associated toric algebra to a monomial ideal, reducing to a combinatorial problem - cf.~Proposition \ref{prop:HS}.
\item Find recursive formulas for the Hilbert series/Ehrhart polynomials. The recursive formulas may involve auxiliary polynomials.
\item Deduce the interlacing property from the recursive formulas. 
\end{enumerate}
Our main results show that:
\begin{itemize}
\item The polynomials $H_{K_n}$ associated to complete graphs have all roots on $R$ - Corollary \ref{cor:complete}. In particular, \cite[Conj.~4.8]{MHNOH} holds. 
\item $H_G$ has all roots on $R$ for any bipartite graph of type $(2,n)$ - Theorem \ref{thm:any(2,n)}. In particular, \cite[Conj.~4.7]{MHNOH} holds.
\item $H_G$ has all roots on $R$ for any complete bipartite graph of type $(3,n)$ - Theorem \ref{thm:H3n}.
\item The polynomials $H_P$ associated to classical root polytopes of type C and to dual Stasheff polytopes have all roots on $R$.
\end{itemize}
Apart from already mentioned methods like interlacing polynomials, recursive relations and Gr\"obner degenerations our research also relates to topics like orthogonal polynomial systems, hypergeometric functions and reflexive polytopes. 
We strongly believe that our techniques will be further applied to many other families of polynomials. 
\section*{Acknowledgements}
Micha{\l}ek and Kummer would like to thank the Max Planck Institute for Mathematics in the Sciences in Leipzig, in particular J{\"u}rgen Jost and Bernd Sturmfels. Micha{\l}ek also wants to thank the Research Institute for Mathematical Sciences in Kyoto, in particular Takayuki Hibi and Hiraku Nakajima, where he worked on this project.
\section{Preliminaries}\label{sec:Notation}
\subsection{Graphs, polytopes and polynomials}\label{sec:PPG}
Let us fix a graph $G=(V,E)$ and a lattice $\ZZ^{|V|}$ with basis elements $e_v$ for $v\in V$.
The associated lattice polytope $P_G\subset\RR^{|V|}$ is the convex hull of:
$$\{e_{v_1}-e_{v_2}: \{v_1,v_2\}\in E\}.$$
We note that each edge of $G$ corresponds to two vertices of $P_G$, and apart from the vertices $P_G$ contains one more lattice point $0\in \ZZ^{|V|}$. The construction above is most common when $G$ is an oriented graph. However for nonoriented graphs (or put differently for oriented graphs with the property that $(v_1,v_2)\in E\Rightarrow (v_2,v_1)\in E$) we obtain a much more symmetric situation. Recall that a lattice polytope $P$ is \emph{reflexive} if $0$ is its only interior point and the dual polytope $P^\vee$ is also integral. Furthermore, it a \emph{terminal} if every lattice point on the boundary is its vertex. For (nonoriented) graphs $G$ the polytope $P_G$ is
\emph{reflexive and terminal} \cite[Prop. 4.2]{MHNOH}
. These are very important and appear not only in combinatorics and algebra but also play a prominent role in algebraic geometry e.g.~through mirror symmetry \cite{Batyrev}. In particular, the associated toric variety is Gorenstein.

Given a lattice polytope $P\subset\RR^d$ we denote by $H_P$ its Ehrhart polynomial, i.e.~
$$H_P(s)=|sP\cap \ZZ^d|\;\;\text{ for integer }\;\; s \geq 0.$$
If $P=P_G$, to simplify notation we write $H_G$ for the associated Ehrhart polynomial. Furthermore, if $G$ is a complete $k$-partite graph of type $(a_1,\dots,a_k)\in \Z_+^k$, we write $H_{(a_1,\dots,a_k)}$. 

If $P$ is $d$-dimensional, then $H_P$ is of degree $d$ and the Ehrhart series is a rational function:
$$HS_P(t)=\frac{\sum_{i=0}^d \delta_it^i}{(1-t)^{d+1}}.$$ 
We use similar subscript notation for the Ehrhart series. 
The sequence $(\delta_0,\delta_1,\dots,\delta_d)$ is called the $\delta$-vector of $P$. Given a $\delta$-vector for $P$ the Ehrhart polynomial can be reconstructed by: $H_P(s)=\sum_{i=0}^d \delta_i{{s+d-i}\choose d}$.

We are now ready to see how the duality for reflexive polytopes reflects in the symmetry properties on the level of algebra.
\begin{prop}[cf. \cite{Batyrev, Hibidual}]\label{equiv}
Let $P$ be a lattice polytope of dimension $d$, 
$H_P(m)=a_dm^d+a_{d-1}m^{d-1}+\cdots+1$ its Ehrhart polynomial and 
$\delta(P)=(\delta_0,\delta_1,\ldots,\delta_d)$ its $\delta$-vector. 
Then the following four conditions are equivalent: 
\begin{enumerate}
\item[(a)] $P$ is a reflexive polytope; 
\item[(b)] $\delta(P)$ is palindromic, i.e., $\delta_j = \delta_{d-j}$ for $0 \leq j \leq d$;
\item[(c)] the functional equation $H_P(m) = ( - 1 )^d H_P(- m - 1)$ holds; 
\item[(d)] $d a_d=2 a_{d-1}$. 
\end{enumerate}
\end{prop}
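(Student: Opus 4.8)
The plan is to prove the four conditions equivalent by establishing the cycle $(a)\Rightarrow(b)\Rightarrow(c)\Rightarrow(d)\Rightarrow(a)$, with Ehrhart--Macdonald reciprocity as the central tool. Recall that reciprocity gives, as an identity of polynomials, $H_P(-m)=(-1)^d\,\overline{H}_P(m)$, where $\overline{H}_P(m)=|\inte(mP)\cap\ZZ^d|$ counts interior lattice points. The equivalence $(a)\Leftrightarrow(b)$ is classical, and I would simply invoke Hibi's characterization of reflexive polytopes via palindromic $\delta$-vectors \cite{Batyrev, Hibidual}; the genuinely new content is the inclusion of the coefficient relation $(d)$, so I would concentrate on the arc $(b)\Rightarrow(c)\Rightarrow(d)\Rightarrow(a)$.

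For $(b)\Rightarrow(c)$ I would argue on generating functions: palindromicity $\delta_j=\delta_{d-j}$ says the numerator $\sum_i\delta_i t^i$ of $HS_P$ is self-reciprocal, and feeding this through the interior Ehrhart series (the reversed $h^*$-polynomial, by reciprocity) yields $\overline{H}_P(m)=H_P(m-1)$, which is exactly the functional equation $(c)$ after the shift $m\mapsto m+1$. For $(c)\Rightarrow(d)$ the cleanest route is the substitution $x=m+\tfrac12$: setting $\widetilde H(x)=H_P(x-\tfrac12)$, the functional equation $(c)$ reads $\widetilde H(-x)=(-1)^d\widetilde H(x)$, so $\widetilde H$ has pure parity $(-1)^d$. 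Extracting the coefficient of $x^{d-1}$ in $\widetilde H=\sum_i a_i(x-\tfrac12)^i$ gives $a_{d-1}-\tfrac{d}{2}a_d$, whose vanishing is precisely $(d)$.

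The crux is $(d)\Rightarrow(a)$, and here I would leave the formal algebra of coefficients and pass to geometry. I would use the volume interpretation of the two leading Ehrhart coefficients, $a_d=\operatorname{vol}(P)$ and $a_{d-1}=\tfrac12\sum_F\operatorname{relvol}(F)$, the sum over facets $F$ of their lattice-normalized volumes. Choosing the interior lattice point as apex (for a reflexive polytope this is the origin) and decomposing $P$ into pyramids over its facets gives $d\,\operatorname{vol}(P)=\sum_F\ell_F\operatorname{relvol}(F)$, where $\ell_F\in\ZZ_{\ge 1}$ is the lattice distance from the apex to the supporting hyperplane of $F$. Condition $(d)$ rewrites as $d\,\operatorname{vol}(P)=\sum_F\operatorname{relvol}(F)$, so subtracting yields $\sum_F(\ell_F-1)\operatorname{relvol}(F)=0$; since each summand is nonnegative and $\operatorname{relvol}(F)>0$, every $\ell_F=1$. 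That all facets lie at lattice distance one from the unique interior lattice point is exactly reflexivity, closing the cycle.

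The main obstacle I anticipate is precisely this last arc: the relation $(d)$ is a single linear condition on the coefficients, whereas palindromicity $(b)$ and the functional equation $(c)$ impose $\lfloor(d+1)/2\rfloor$ independent conditions, so no purely formal manipulation of $\widetilde H$ can derive $(c)$ from $(d)$. The point is that being an Ehrhart polynomial supplies the missing rigidity --- the integrality of the facet lattice distances $\ell_F$ and the positivity of the relative volumes --- which upgrades one numerical identity into full reflexivity. A secondary technical point I would need to address is the existence and uniqueness of the interior lattice point used as apex: I would extract this from $\delta_d=(-1)^dH_P(-1)=|\inte(P)\cap\ZZ^d|$ together with Stanley's nonnegativity of the $\delta$-vector, ensuring the pyramid decomposition is legitimate before running the volume comparison.
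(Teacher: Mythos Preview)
The paper does not prove this proposition; it is stated with a ``cf.'' to Batyrev and Hibi and then used. So there is no proof to compare against, and your outline is essentially the standard argument found in the literature.

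Your implications $(a)\Leftrightarrow(b)$, $(b)\Rightarrow(c)$, and $(c)\Rightarrow(d)$ are correct as written. The genuine gap is in $(d)\Rightarrow(a)$, precisely at the point you call ``secondary'': Stanley nonnegativity gives only $\delta_d\ge 0$, not $\delta_d\ge 1$, so it does not produce the interior lattice point your pyramid argument needs. This gap cannot be closed, because the implication $(d)\Rightarrow(a)$ is in fact \emph{false} without that extra hypothesis. Reeve's tetrahedron $T_4=\operatorname{conv}(0,e_1,e_2,e_1+e_2+4e_3)$ has Ehrhart polynomial $\tfrac{2}{3}m^3+m^2+\tfrac{4}{3}m+1$, so $d\,a_d=3\cdot\tfrac{2}{3}=2=2a_{d-1}$ and $(d)$ holds; yet its $\delta$-vector is $(1,0,3,0)$, so it has no interior lattice point and is not reflexive. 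One checks directly that $(c)$ fails as well: $(-1)^3 H_{T_4}(-m-1)=\tfrac{2}{3}m^3+m^2+\tfrac{4}{3}m\neq H_{T_4}(m)$.

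Thus the proposition, to be literally correct, needs the standing hypothesis that $P$ has an interior lattice point (this is how the equivalence with $(d)$ is stated in, e.g., Bey--Henk--Wills). With that hypothesis your pyramid argument for $(d)\Rightarrow(a)$ goes through exactly as you describe: taking the apex at an interior lattice point makes every $\ell_F$ a positive integer, the identity $\sum_F(\ell_F-1)\operatorname{relvol}(F)=0$ forces all $\ell_F=1$, and reflexivity follows. Uniqueness of the interior point then comes a posteriori from $(b)$, since $\delta_d=\delta_0=1$.
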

Property (c) of the previous proposition shows exactly (skew)symmetry around $-\frac{1}{2}$. In the following we will denote $R_{a}=\{z\in\CC: \Rel(z)=a\}$. 
Our goal will be to show that polynomials we consider are not only symmetric, but have all of their roots on $R_{-\frac{1}{2}}$. Thus, we will denote $R=R_{-\frac{1}{2}}$. 
\subsection{Interlacing Polynomials}\label{sec:Interlacing}
%
The theory of interlacing polynomials has proved to be very useful in various areas of mathematics. It was a crucial ingredient for the construction of bipartite Ramanujan graphs of all degrees \cite{intI}, the solution of the Kadison--Singer problem \cite{intII} as well as the proof of the Johnson conjectures \cite{petter} to name only a few. More related to our work, recently it was also used for proving the real rootedness of polynomials appearing as the numerator of certain Hilbert series \cite{jochemko2016real}. For a comprehensive treatment of interlacing polynomials and their properties we refer to \cite{fisk06}.
\begin{df}[interlacing]
 Let $L=\alpha+\RR\cdot \beta$ be a line in $\CC$ with $\alpha,\beta\in\CC$ and let $f,g\in\CC[x]$ be univariate polynomials with $d=\deg f=\deg g+1$. Assume that both $f$ and $g$ have all their zeros on $L$. Let $\alpha+t_1\cdot \beta,\ldots,\alpha+t_d\cdot \beta$ and $\alpha+s_1\cdot \beta,\ldots,\alpha+s_{d-1}\cdot \beta$ with $t_i,s_j\in\RR$ be the zeros of $f$ and $g$ respectively. We say that $f$ is $L$-\textit{interlaced} by $g$ if 
 (after possibly relabeling) we have \[
                           t_1\leq s_1\leq t_2 \leq \cdots \leq t_{d-1}\leq s_{d-1} \leq t_d . \]
\end{df}


\begin{lema}\label{lem:interlcone}
  Let $L\subseteq\CC$ be a line, let $f,g_1,\ldots, g_r\in\CC[x]$ be monic polynomials and let $\lambda_1,\ldots,\lambda_r \in \RR$ be nonnegative real numbers. If $f$ is $L$-interlaced by each $g_i$, then $f$ is $L$-interlaced by \[\lambda_1 g_1+\ldots+\lambda_r g_r.\]
\end{lema}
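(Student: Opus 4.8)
The plan is to transport the statement to the real line, where interlacing is the classical notion, and then to exploit the nonnegativity of the $\lambda_i$ through a single sign computation at the zeros of $f$.

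\emph{Reduction to $\RR$.} Writing $L=\alpha+\RR\cdot\beta$ with $\beta\neq 0$, I would substitute $z=\alpha+\beta x$. If $f$ is monic of degree $d$ with zeros $\alpha+t_j\beta$ ($t_j\in\RR$), then $f(\alpha+\beta x)=\beta^{d}p(x)$, where $p(x)=\prod_{j=1}^{d}(x-t_j)$ is monic with real zeros $t_1\le\cdots\le t_d$; likewise each $g_i(\alpha+\beta x)=\beta^{d-1}q_i(x)$ with $q_i$ monic, real and real-rooted of degree $d-1$, and $L$-interlacing of $f$ by $g_i$ becomes ordinary interlacing of $p$ by $q_i$. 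Under the same substitution $\sum_i\lambda_i g_i$ becomes $\beta^{d-1}Q(x)$ with $Q:=\sum_i\lambda_i q_i$. We may assume the $\lambda_i$ are not all zero (otherwise there is nothing to prove), so that $Q$ has positive leading coefficient $\sum_i\lambda_i$ and $\deg Q=d-1$. It therefore suffices to show that $Q$ is real-rooted and interlaces $p$, and then to translate back through $z=\alpha+\beta x$.

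\emph{The sign pattern.} The heart of the argument is to evaluate each $q_i$ at the zeros of $p$. Writing $s^{(i)}_1\le\cdots\le s^{(i)}_{d-1}$ for the zeros of $q_i$, the interlacing inequalities $t_k\le s^{(i)}_k\le t_{k+1}$ show that, among the factors of $q_i(t_j)=\prod_k\bigl(t_j-s^{(i)}_k\bigr)$, precisely $d-j$ are nonpositive, whence
\[(-1)^{d-j}\,q_i(t_j)\ \ge\ 0\qquad\text{for all }i\text{ and all }j=1,\dots,d.\]
This is exactly where the hypothesis $\lambda_i\ge 0$ is used: multiplying by $\lambda_i$ and summing preserves the inequality, so that $(-1)^{d-j}Q(t_j)\ge 0$ for every $j$.

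\emph{From the sign pattern to interlacing.} When the $t_j$ are distinct and no value $Q(t_j)$ vanishes, consecutive values $Q(t_j),Q(t_{j+1})$ have opposite signs, so the intermediate value theorem produces a zero of $Q$ in each of the $d-1$ open intervals $(t_j,t_{j+1})$; since $\deg Q=d-1$ these are all of its zeros and they interlace the $t_j$, which is what we want. The main obstacle is the degenerate situation in which $p$ has repeated zeros or $Q$ vanishes at some $t_j$, for then the naive count can miss or double-count a root. I would resolve this in two steps. First, interlacing forces each $q_i$, hence $Q$, to be divisible by $\prod_\ell(x-\tau_\ell)^{\mu_\ell-1}$, where the $\tau_\ell$ are the distinct zeros of $p$ of multiplicity $\mu_\ell$; dividing out this common factor reduces the problem to a $p$ with only simple zeros. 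Second, to handle zeros of $Q$ landing on a zero of $p$, I would perturb: adding to $Q$ a small positive multiple of the monic polynomial having exactly one zero strictly inside each interval between consecutive zeros of $p$ makes all the sign inequalities strict, and hence yields the required number of simple zeros, one strictly inside each interval; letting the perturbation tend to $0$ and invoking continuity of the zeros places one zero of $Q$ in each closed interval $[t_j,t_{j+1}]$. This is precisely the statement that $Q$ interlaces $p$, and translating back through $z=\alpha+\beta x$ completes the argument.
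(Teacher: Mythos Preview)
Your argument is correct and begins exactly as the paper does: an affine change of variables sending $L$ to $\RR$ reduces the claim to real interlacing. At that point the paper simply cites \cite[Lem.~1.10]{fisk06}, whereas you unpack a standard proof of that lemma via the sign pattern $(-1)^{d-j}q_i(t_j)\ge 0$ at the zeros of $p$, together with a factor-out-and-perturb argument for the degenerate cases; this is essentially how the cited result is proved, so the two approaches coincide.
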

\begin{proof}
 Apply an affine
 transformation that sends $L$ to $\RR$ and use \cite[Lem.~1.10]{fisk06}.
\end{proof}

\begin{lema}\label{lem:simple rec interlace}
  Let $f,g,h\in\RR[x]$ be real monic polynomials such that $\deg f=\deg g+1=\deg h+2$. Assume that there is an identity
 \[f=(x+a)\cdot g+b\cdot h\] for some $a,b\in\RR$, $b<0$. Then the following are equivalent:
 \begin{enumerate}
  \item $f$ is $\RR$-interlaced by $g$.
  \item $g$ is $\RR$-interlaced by $h$.
 \end{enumerate}
\end{lema}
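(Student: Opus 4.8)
The plan is to work directly with the three-term relation $f=(x+a)g+bh$ and to analyse the signs of the polynomials at the roots of $g$, combining the intermediate value theorem with a degree count. Write $n=\deg f$, so that $\deg g=n-1$ and $\deg h=n-2$. Before starting I would reduce to the case in which the relevant roots are simple and the polynomials are pairwise coprime: from the relation and $b\neq 0$, any common root of two of $f,g,h$ is a common root of all three (for instance a common root $\beta$ of $f$ and $g$ gives $b\,h(\beta)=0$, hence $h(\beta)=0$). Thus $d:=\gcd(f,g)=\gcd(g,h)$, and dividing through by $d$ yields monic $f_1,g_1,h_1$ satisfying the same relation $f_1=(x+a)g_1+b\,h_1$ with the same degree pattern, and with both interlacing statements equivalent to the original ones, since removing a common real-rooted factor preserves interlacing. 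After this reduction the polynomials are pairwise coprime, and coprimality together with the interlacing hypothesis forces every root to be simple and every interlacing inequality strict: a coincidence of two consecutive roots, or of a root of one polynomial with a root of the other, would---by the squeeze in the interlacing chain---produce a common root.

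For (1)$\Rightarrow$(2), let $\alpha_1<\cdots<\alpha_n$ and $\beta_1<\cdots<\beta_{n-1}$ be the roots of $f$ and $g$, strictly interlaced as $\alpha_i<\beta_i<\alpha_{i+1}$. Evaluating the relation at $\beta_i$ gives $f(\beta_i)=b\,h(\beta_i)$. Since $f$ is monic with simple roots $\alpha_j$, exactly $n-i$ of the factors $\beta_i-\alpha_j$ are negative, so $\operatorname{sgn}f(\beta_i)=(-1)^{n-i}$; as $b\neq 0$ this shows $\operatorname{sgn}h(\beta_i)$ alternates with $i$. Hence $h$ changes sign on each interval $(\beta_i,\beta_{i+1})$, $i=1,\dots,n-2$, producing $n-2$ real roots; as $\deg h=n-2$ these are all of them, and they interlace $g$.

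For (2)$\Rightarrow$(1), let $\beta_1<\cdots<\beta_{n-1}$ and $\gamma_1<\cdots<\gamma_{n-2}$ be the strictly interlaced roots of $g$ and $h$. The same count gives $\operatorname{sgn}h(\beta_i)=(-1)^{n-1-i}$, whence $\operatorname{sgn}f(\beta_i)=\operatorname{sgn}(b)\,(-1)^{n-1-i}=(-1)^{n-i}$, using $b<0$. This alternating pattern yields one root of $f$ in each $(\beta_i,\beta_{i+1})$ for $i=1,\dots,n-2$. The two remaining roots come from the ends: $f$ is monic, so $f(x)\to+\infty$ as $x\to+\infty$ while $\operatorname{sgn}f(x)=(-1)^n$ for $x\to-\infty$; comparing with $\operatorname{sgn}f(\beta_{n-1})=-1$ and $\operatorname{sgn}f(\beta_1)=(-1)^{n-1}$ produces a sign change, hence a root, in $(\beta_{n-1},+\infty)$ and in $(-\infty,\beta_1)$. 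This accounts for all $n$ roots of $f$, real and interlacing $g$. Here the hypothesis $b<0$ is essential: it is precisely what makes the signs at the extreme roots match the monic behaviour at $\pm\infty$, whereas in (1)$\Rightarrow$(2) only $b\neq 0$ was used.

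The main obstacle is not the sign bookkeeping but the reduction to the coprime, simple-root case described in the first paragraph; once that is in place, the intermediate value theorem and the degree count finish the argument. The two delicate points are the claim that removing a common factor preserves each interlacing property, and the verification that coprimality upgrades every non-strict interlacing inequality to a strict one. I would carry out the latter carefully by the squeeze argument above, and invoke the standard theory of interlacing polynomials (as in \cite{fisk06}) for the former.
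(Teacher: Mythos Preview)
Your argument is correct and is, in substance, the same as the paper's. The paper's proof is a one-line appeal to Sturm sequences: writing the relation as $f=(x+a)g-(-b)h$ with $-b>0$ exhibits $f,g,(-b)h$ as three consecutive terms of a Sturm chain for $f$ and $g$, and the equivalence of the two interlacing statements is then the classical property of such chains. What you have written out is exactly the sign-variation and intermediate-value argument that underlies this property, together with the preliminary reduction to the coprime case via $\gcd(f,g)=\gcd(g,h)$, so you have unpacked the citation rather than taken a different route. Your side remark that (i)$\Rightarrow$(ii) uses only $b\neq 0$ while (ii)$\Rightarrow$(i) genuinely needs $b<0$ is correct and worth keeping; the only steps you defer to the literature, namely that dividing out (and multiplying back) a common real-rooted factor preserves interlacing, are indeed standard facts in \cite{fisk06}.
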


\begin{proof}
 Look at the Sturm sequence associated to $f$ and $g$, cf. \cite[\S 2.2.2]{sturm}.
\end{proof}

\begin{lema}\label{lem:interl}
  Let $f_1,f_2,f_3\in\RR[x]$ be real monic polynomials such that $\deg f_1=\deg f_2+1=\deg f_3+2$. Assume that there is an identity
 \[f_1=(x+a)\cdot f_2+b\cdot f_3\] for some $a,b\in\RR$, $b>0$. Furthermore, let \[(-1)^{\deg f_i} f_i(x)=f_i(2d-x)\] for some $d\in\RR$ and $i=1,2,3$. Then the following are equivalent:
 \begin{enumerate}
  \item $f_1$ is $R_d$-interlaced by $f_2$.
  \item $f_2$ is $R_d$-interlaced by $f_3$.
 \end{enumerate}
 If $(i)$ and $(ii)$ are satisfied, then $(x-d)f_3$ $R_d$-interlaces $f_1$.
\end{lema}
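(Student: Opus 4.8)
The plan is to reduce the whole statement to the real line, where Lemma \ref{lem:simple rec interlace} is available. Since each $f_i$ has all its zeros on $R_d$ and satisfies $(-1)^{\deg f_i}f_i(x)=f_i(2d-x)$, I would substitute $x=d+\mathrm{i}y$ and set $\phi_i(y)=\mathrm{i}^{-\deg f_i}\,f_i(d+\mathrm{i}y)$. A short computation, in which the functional equation of $f_i$ combines with the reality of its coefficients, shows that each $\phi_i$ is a \emph{real} monic polynomial, and that its real zeros are exactly the numbers $t_j$ for which $d+\mathrm{i}t_j$ is a zero of $f_i$. Because this correspondence is order preserving, $f_1$ is $R_d$-interlaced by $f_2$ if and only if $\phi_1$ is $\RR$-interlaced by $\phi_2$, and similarly for the pair $(f_2,f_3)$; the verification that $\phi_i$ is real is precisely the point at which the symmetry hypothesis is used.

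Next I would push the recursion through this substitution. Replacing $x$ by $2d-x$ in the identity $f_1=(x+a)f_2+bf_3$ and invoking the three functional equations forces $a=-d$; after dividing by the suitable power of $\mathrm{i}$ the identity therefore becomes $\phi_1=y\,\phi_2-b\,\phi_3$, with leading coefficient $-b<0$ since $b>0$. Applying Lemma \ref{lem:simple rec interlace} to $f=\phi_1$, $g=\phi_2$, $h=\phi_3$ with this negative coefficient gives that $\phi_1$ is $\RR$-interlaced by $\phi_2$ iff $\phi_2$ is $\RR$-interlaced by $\phi_3$, which translates back into the equivalence of (i) and (ii).

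For the final assertion, the same substitution sends $(x-d)f_3$ exactly to $y\,\phi_3$, so it suffices to prove that $y\,\phi_3$ $\RR$-interlaces $\phi_1$ once (i) and (ii) hold, i.e. once $\phi_2$ interlaces $\phi_1$ and $\phi_3$ interlaces $\phi_2$. Writing $n=\deg\phi_1$, I would evaluate $\phi_1=y\phi_2-b\phi_3$ at the $n-1$ real zeros of $y\,\phi_3$: at $y=0$ this equals $-b\,\phi_3(0)$, while at a zero $w$ of $\phi_3$ it equals $w\,\phi_2(w)$. Reading off the sign of $\phi_2$ at each zero of $\phi_3$ from the interlacing $\phi_3\prec\phi_2$, and keeping track of where the extra zero $0$ falls among the zeros of $\phi_3$, I expect the value of $\phi_1$ at the $i$-th smallest zero of $y\,\phi_3$ to have sign $(-1)^{n-i}$ \emph{uniformly}, independently of the position of $0$. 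These strictly alternating signs, together with the sign of $\phi_1$ at $\pm\infty$ and the fact that $\phi_1$ has exactly $n$ real zeros, place one zero of $\phi_1$ strictly between consecutive zeros of $y\,\phi_3$ and one beyond each end, which is exactly the claimed interlacing.

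The normalization and the $a=-d$ computations are routine; the main obstacle will be this last sign count, where the extra factor $y$ and the location of its zero at $0$ relative to the zeros of $\phi_3$ must conspire so that the alternation survives. I would also guard against degeneracies (for instance $\phi_3(0)=0$, which makes $y\,\phi_3$ acquire a double zero) by a density argument, perturbing the $\phi_i$ to nearby polynomials with simple, strictly interlacing zeros and passing to the limit.
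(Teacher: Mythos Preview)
Your reduction to real polynomials via $\phi_i(y)=\mathrm{i}^{-\deg f_i}f_i(d+\mathrm{i}y)$, the verification that $a=-d$, and the application of Lemma~\ref{lem:simple rec interlace} to obtain the equivalence (i)$\Leftrightarrow$(ii) are exactly what the paper does for the first claim.

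For the second claim the two arguments diverge. The paper does not evaluate $\phi_1$ at the zeros of $y\phi_3$; instead it continues the Sturm sequence two more steps, writing $f_2=(x-d)f_3+b'f_4$ and $f_3=(x-d)f_4+b''f_5$ with $b',b''>0$, eliminates $f_2$ and $f_4$ to get
\[
f_1(x+d)=(x^2+b+b')\,f_3(x+d)-b'b''\,f_5(x+d),
\]
observes that these are even (or odd) polynomials in $x$, substitutes $x^2\mapsto x$, and then invokes \cite[Lem.~1.82]{fisk06} on the resulting real-rooted polynomials. Your route is a direct sign count: your ``expected'' alternation does in fact hold, since at $z_i=w_i<0$ one has $\phi_1(z_i)=w_i\phi_2(w_i)$ with $\mathrm{sign}\,\phi_2(w_i)=(-1)^{n-1-i}$, at $z_{k+1}=0$ one has $\phi_1(0)=-b\phi_3(0)$ with $\mathrm{sign}\,\phi_3(0)=(-1)^{n-2-k}$, and at $z_i=w_{i-1}>0$ one picks up no extra sign from the factor $w_{i-1}$, so in every case $\mathrm{sign}\,\phi_1(z_i)=(-1)^{n-i}$; together with the behaviour of $\phi_1$ at $\pm\infty$ this pins down the interlacing. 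The density perturbation you suggest handles the degenerate case $\phi_3(0)=0$ (or shared zeros) cleanly.

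Your argument is more elementary and self-contained, avoiding the external reference to Fisk and the detour through even polynomials; the paper's approach is more structural (it stays within the Sturm-sequence formalism) and would generalise more readily if one wanted to skip several steps at once.
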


\begin{proof}
 First note that we necessarily have $a=-d$. If we replace $x$ by $\textnormal{i}x+d$ and divide by $\textnormal{i}^{\deg f_1}$, we get the first claim from the preceding lemma. In order to prove the second statement, we compute the next elements of the Sturm sequence:
 \[
  f_2=(x-d)f_3+b' f_4,
 \]
\[
 f_3=(x-d)f_4+b'' f_5,
\]
for some monic $f_4,f_5\in\RR[x]$ of degree $\deg f_1-3$ and $\deg f_1-4$ respectively and $b',b''>0$. From this we get \[f_1(x+d)=(x^2+b+b') f_3(x+d)-b' b'' f_5(x+d).\]
After possibly dividing by $x$ we can assume that the polynomials $f_1(x+d)$, $f_3(x+d)$ and $f_5(x+d)$ are even. Replacing each occurence of $x^2$ by $x$ we get \[g_1=(x+b+b')g_3-b'b'' g_5\]
for some polynomials $g_i$ with $g_i(x^2)=f_i(x+d)$. Note that the $g_i$ have only real and nonpositive roots. Thus, by \cite[Lem. 1.82]{fisk06} $g_3$ interlaces $g_1$. This implies the claim.
\end{proof}
\subsection{Orthogonal Polynomial Sequences}\label{sec:ops}
The theory of orthogonal polynomials is a classical topic in mathematics. 
\begin{df}[positive-definite moment functional, orthogonal polynomial system]
A linear function $\mu:\CC[x]\rightarrow \CC$ is called a \emph{moment functional}. A moment functional $\mu$ is called \emph{positive-definite} if for any nonzero polynomial $f$ that takes nonnegative values on $\RR$ we have $\mu(f)\in\RR$ and $\mu(f)>0$.  
Let us fix a moment functional $\mu$.
A sequence of polynomials $\{f_d\}_{d \in \N}$, where $\deg f_d=d$ is called \emph{an orthogonal polynomial system} (OPS) if 
$\mu(f_df_e)=0$ if $d \neq e$ and $\mu(f_df_e)\neq 0$ if $d=e$ for all $d,e\in\N$.
\end{df}
%
%

Examples for OPS include the Hermite polynomials, the Laguerre polynomials and the Jacobi polynomials. 

\begin{thm}[Favard's Theorem, \cite{Chihara} Ch.~I, Thm.~4.1, Thm.~4.4]\label{thm:Favard}
Let $(c_j)_{j \in \N}$, $(\lambda_j)_{j \in \N}$ be arbitrary sequences of complex numbers. Let $(f_j(x))_{j=-1}^\infty$ be a sequence of polynomials defined by:
$$f_{-1}(x)=0, f_0(x)=1$$
$$f_j(x)=(x-c_j)f_{j-1}(x)-\lambda_jf_{j-2}(x), \text{ for }j=1,\dots.$$ 
Then there exists a positive-definite moment functional $\mu$ with respect to which $f_j$ is an OPS if and only if $c_j$ is real and $\lambda_j>0$ for each $j$. 
Further, for any positive-definite moment functional $\mu$ there exists a unique monic OPS and it satisfies the recurrence relation above.
\end{thm}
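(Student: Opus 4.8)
The plan is to treat the statement as the classical Favard theorem and to split it into the three assertions it bundles together: (necessity) that the recurrence coefficients of an OPS are forced to be real with $\lambda_j>0$; (sufficiency, the substantive direction) that real $c_j$ and positive $\lambda_j$ produce a positive-definite functional; and (existence/uniqueness) that every positive-definite $\mu$ admits a unique monic OPS obeying such a recurrence.

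First I would dispatch existence/uniqueness and necessity together. Given a positive-definite $\mu$, the bilinear form $\langle p,q\rangle:=\mu(pq)$ is a genuine inner product on $\RR[x]$, since positivity of $\mu$ on nonzero nonnegative polynomials gives $\langle p,p\rangle=\mu(p^2)>0$ for $p\neq 0$. Applying Gram--Schmidt to $1,x,x^2,\dots$ yields monic orthogonal polynomials $P_n$ with $\deg P_n=n$; uniqueness is immediate because a monic degree-$n$ polynomial orthogonal to all lower degrees is unique, as any difference would be a lower-degree polynomial orthogonal to itself and hence zero. For the recurrence, I would expand the monic polynomial $xP_n$ in the orthogonal basis $P_0,\dots,P_{n+1}$; since $\langle xP_n,P_k\rangle=\langle P_n,xP_k\rangle=0$ whenever $\deg(xP_k)<n$, only three terms survive, giving $P_{n+1}=(x-c_n)P_n-\lambda_n P_{n-1}$ with $c_n=\langle xP_n,P_n\rangle/\langle P_n,P_n\rangle\in\RR$ and $\lambda_n=\langle P_n,P_n\rangle/\langle P_{n-1},P_{n-1}\rangle>0$. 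This simultaneously proves the final assertion of the theorem and the necessity of reality and positivity; the coefficient $\lambda_1$, which multiplies $f_{-1}=0$, is free and may be identified with $\mu(1)>0$.

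The heart of the matter is the converse. Given real $c_j$ and $\lambda_j>0$, I would define a moment functional by prescribing its values on the basis $\{f_n\}$ (a basis because each $f_n$ is monic of degree $n$): set $\mu(f_0)=\lambda_1$ and $\mu(f_n)=0$ for $n\geq 1$, and extend linearly. I would then prove by induction, repeatedly substituting the recurrence into one factor of $\mu(f_mf_n)$, the two statements $\mu(f_mf_n)=0$ for $m\neq n$ and $\mu(f_n^2)=\lambda_1\lambda_2\cdots\lambda_{n+1}$; positivity of the $\lambda_j$ makes each diagonal value strictly positive. Finally, to upgrade orthogonality to positive-definiteness I would use that $\mu$ is real on $\RR[x]$ (the moments $\mu(x^n)$ are determined by a recursion with real data) together with the classical fact that a univariate real polynomial $f$ nonnegative on $\RR$ is a sum of two squares of real polynomials, $f=p^2+q^2$; expanding $p,q$ in the orthogonal basis gives $\mu(f)=\mu(p^2)+\mu(q^2)=\sum_k a_k^2\,\mu(f_k^2)+\sum_k b_k^2\,\mu(f_k^2)>0$ whenever $f\neq 0$.

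I expect the main obstacle to be the bookkeeping in the inductive orthogonality computation: substituting $f_n=(x-c_n)f_{n-1}-\lambda_n f_{n-2}$ and then re-expanding $xf_m=f_{m+1}+c_{m+1}f_m+\lambda_{m+1}f_{m-1}$ forces one to track several index cases so that the off-diagonal contributions cancel exactly and the diagonal recursion $\mu(f_n^2)=\lambda_{n+1}\mu(f_{n-1}^2)$ emerges; care is needed to order the induction so that no term of index exceeding the current stage is invoked. The only genuinely external ingredient is the sum-of-two-squares representation of nonnegative univariate polynomials, which cleanly reduces positive-definiteness to the positivity of $\mu$ on squares that the recurrence already guarantees.
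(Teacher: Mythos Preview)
The paper does not prove this statement at all; it is quoted as a classical result with a citation to Chihara (Ch.~I, Thms.~4.1 and 4.4) and is used as background for the discussion of orthogonal polynomial sequences. So there is no ``paper's own proof'' to compare against.

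That said, your proposal is essentially the standard textbook proof, and it is the one carried out in Chihara's book. The three pieces---Gram--Schmidt for existence/uniqueness, the three-term recurrence with the explicit formulas $c_n=\langle xP_n,P_n\rangle/\langle P_n,P_n\rangle$ and $\lambda_n=\langle P_n,P_n\rangle/\langle P_{n-1},P_{n-1}\rangle$ for necessity, and the construction of $\mu$ on the basis $\{f_n\}$ together with the inductive computation $\mu(f_n^2)=\lambda_1\cdots\lambda_{n+1}$ for sufficiency---are all correct and in the right order. Your use of the sum-of-two-squares decomposition to pass from positivity on squares to positive-definiteness is clean; Chihara instead argues via the positivity of all Hankel determinants $\Delta_n=\det(\mu(x^{i+j}))_{0\le i,j\le n}$, which he shows equals $\prod_{k=0}^n\mu(f_k^2)>0$, but the two routes are equivalent. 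One small point of care: since $\lambda_1$ multiplies $f_{-1}=0$ it does not appear in the polynomials themselves, so in the ``only if'' direction the condition $\lambda_1>0$ is really the normalisation $\mu(1)>0$ rather than a constraint extracted from the recurrence; you noted this, but it is worth stating explicitly so the biconditional reads correctly.
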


Note that in this case Lemma \ref{lem:simple rec interlace} implies that every $f_j$ has only real roots and is interlaced by $f_{j-1}$. This is also the content of the Separation Theorem, cf. \cite[Thm.~I.5.3]{Chihara}. This property was used in \cite{BCKV} to show that certain polynomials obtained from the Mellin transform of Hermite polynomials and polynomials $H_{1,n}(-x)$ obtained from the Mellin transform of Laguerre polynomials
have all their roots on the line $R_{\frac{1}{2}}$.
 
\section{Ehrhart Polynomials of Reflexive Polytopes as Orthogonal Polynomial Sequences}
 In this section we will be interested in sequences of reflexive polytopes $(P_j)_{j=0}^\infty$ whose Ehrhart polynomials $f_j(x)=H_{P_j}(x)$ satisfy a recurrence relation $$f_j=(a_j x+b_j)f_{j-1}+c_j f_{j-2}, \text{ for }j=2,3,\dots$$ for some $a_j,b_j,c_j\in\QQ$. Note that by Proposition \ref{equiv} and since the constant term of every Ehrhart polynomial is equal to one we can write the relation as $$f_j=M_j(2x+1)f_{j-1}+(1-M_j) f_{j-2}, \text{ for }j=2,3,\dots$$ for some $M_j\in\QQ$. If $0\leq M_j\leq1$, then it follows from Lemma \ref{lem:interl} that every $f_j$ has all its roots on $R$ and is $R$-interlaced by $f_{j-1}$ and $(x+\frac{1}{2})f_{j-2}$. Of course, the modified sequence $(\tilde{f}_j)_{j=-1}^\infty$ where $\tilde{f}_j(x)=(-\textnormal{i})^d f(\textnormal{i}x-\frac{1}{2})$ is in that case an OPS.
 
 Note that since it is well-known that $2x+1$ is the unique Ehrhart polynomial of a reflexive polytope of dimension one and that every two dimensional reflexive polytope has the Ehrhart polynomial $ax^2+ax+1$, where $a \in \{\frac{i}{2} : i =3,4,\ldots,9\}$, we can conclude that $M_2$ should be one of $\frac{3}{8},\frac{4}{8},\ldots,\frac{9}{8}$. In the following examples we will see that the values $\frac{4}{8},\frac{5}{8},\frac{6}{8},\frac{8}{8}$ actually appear. Furthermore, as we will see the polytopes giving rise to these OPS come in interesting families appearing in different branches of mathematics. 

\begin{con} There do not exist families of OPS coming from Ehrhart polynomials, such that $M_2=\frac{3}{8}$ or $M_2=\frac{7}{8}$.
\end{con}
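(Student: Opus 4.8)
The plan is to translate the conjecture into a purely combinatorial statement about the $\delta$-vectors forced by the recurrence and then to locate an obstruction to realizability. First I would record that, by Favard's Theorem \ref{thm:Favard} applied to the transformed sequence $\tilde f_j(x)=(-\textnormal{i})^{j}f_j(\textnormal{i}x-\tfrac12)$, being an OPS is \emph{equivalent} to $0<M_j<1$ for all $j\geq 2$: the symmetry forces the recurrence coefficient $c_j=0$, while a direct computation gives $\lambda_2=\tfrac{1-M_2}{4M_2}$ and $\lambda_j=\tfrac{1-M_j}{4M_jM_{j-1}}$ for $j\geq 3$, all positive exactly when every $M_j\in(0,1)$. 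This already explains why the list in the conjecture omits $M_2=\tfrac98$ (excluded by the OPS condition itself) but retains $\tfrac38$ and $\tfrac78$: both of these pass the OPS test, so any obstruction must come \emph{entirely} from the requirement that each $f_j$ be the Ehrhart polynomial of a genuine reflexive polytope $P_j$.

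Next I would pass to the level of $\delta$-vectors. Writing $HS_{P_j}(t)=h_j(t)/(1-t)^{j+1}$ and using the summation identity
\[ \sum_{s\geq0}(2s+1)g(s)t^s=\frac{2t(1-t)h'(t)+((2d+1)t+1)h(t)}{(1-t)^{d+2}} \]
for a degree-$d$ polynomial $g$ with $\delta$-polynomial $h$, the recurrence $f_j=M_j(2x+1)f_{j-1}+(1-M_j)f_{j-2}$ becomes
\[ h_j=M_j\big(2t(1-t)h_{j-1}'+((2j-1)t+1)h_{j-1}\big)+(1-M_j)(1-t)^2h_{j-2}, \]
with $h_0=1$ and $h_1=1+t$. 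By Proposition \ref{equiv} each $h_j$ is automatically palindromic, and by Stanley's nonnegativity theorem \cite{stanley1997enumerative} every coefficient $\delta^{(j)}_i$ must be a nonnegative integer; evaluating at $t=1$ yields the normalized-volume recursion $V_j=2jM_jV_{j-1}$. Starting from $M_2=\tfrac38$ (so $h_2=1+t+t^2$) or $M_2=\tfrac78$ (so $h_2=1+5t+t^2$), each $\delta^{(j)}_i$ is an explicit affine-linear function of $M_j$ whose integrality and nonnegativity, together with Hibi's lower-bound inequalities $\delta^{(j)}_1\leq\delta^{(j)}_i$ from \cite{Hibidual}, confine $M_j$ to a finite set of rationals with controlled denominators at each step. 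The aim is then to extract a quantity that is monotone along every admissible sequence and incompatible, from the initial datum onward, with $M_2\in\{\tfrac38,\tfrac78\}$—for instance by showing the forced $\delta$-vectors eventually leave the range realized by reflexive polytopes.

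The main obstacle is precisely this last, global, step, and it is the reason the statement is posed as a conjecture. A purely local analysis cannot close the argument: a direct computation shows that $h_3$ and $h_4$ admit valid nonnegative integral palindromic continuations for many choices of $M_3,M_4\in(0,1)$ (already for $M_2=\tfrac38$ one finds $\delta^{(3)}_1=9M_3-1$, a nonnegative integer for every $M_3\in\{\tfrac19,\tfrac29,\dots,\tfrac89\}$), so no finite truncation of the sequence is by itself contradictory. One must therefore rule out \emph{all} infinite continuations simultaneously. I expect the decisive input to be a realizability constraint strictly stronger than Stanley nonnegativity and the Hibi inequalities: either an effective description of which palindromic integer vectors actually arise as $\delta$-vectors of reflexive polytopes in every dimension, or a classification of the orthogonal polynomial systems obtainable from Ehrhart data showing that they must coincide with the classical and graph families already identified (whose initial values are $\tfrac48,\tfrac58,\tfrac68,\tfrac88$). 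Converting either of these into a monotone invariant that excludes $\tfrac38$ and $\tfrac78$ is the crux of the problem.
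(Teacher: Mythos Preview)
The statement you are attempting is a \emph{conjecture}; the paper does not prove it. The only progress the paper records is the one-line remark immediately following the statement: the conjecture can be confirmed under the extra hypothesis that $M_n$ is a decreasing rational function of $n$. No argument is supplied even for that partial case. So there is no ``paper's own proof'' to compare against, and your proposal should be read as an outline of a possible attack rather than as a candidate proof.

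That said, your write-up is internally coherent and the computations you display are correct: the Favard translation gives $c_j=0$ and $\lambda_j=\frac{1-M_j}{4M_jM_{j-1}}$ (with $\lambda_2=\frac{1-M_2}{4M_2}$), so the strict OPS condition is exactly $0<M_j<1$; the $\delta$-polynomial recursion $h_j=M_j\bigl(2t(1-t)h_{j-1}'+((2j-1)t+1)h_{j-1}\bigr)+(1-M_j)(1-t)^2h_{j-2}$ is right, and the specializations $h_2=1+t+t^2$ for $M_2=\tfrac38$ and $h_2=1+5t+t^2$ for $M_2=\tfrac78$, as well as $\delta^{(3)}_1=9M_3-1$, all check out. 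You are also candid that the local constraints (integrality, Stanley nonnegativity, Hibi's inequalities) do not exclude any finite initial segment, and you correctly identify the missing ingredient as a global realizability obstruction for $\delta$-vectors of reflexive polytopes --- precisely the kind of obstruction that is not currently available, which is why the authors left this as a conjecture.

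Where your approach and the paper's remark diverge is in the restriction imposed. The paper sidesteps the global problem by assuming $M_n$ is a \emph{decreasing rational function} of $n$; under that ansatz the sequence $(M_n)$ is determined by finitely many parameters, the asymptotics of $M_n$ (and hence of the normalized volumes $V_j=\prod_{k\le j}2kM_k$) are pinned down, and one can presumably check the small list of admissible rational functions against the known examples. Your $\delta$-vector strategy is more ambitious but, as you yourself say, hits the wall of not having a usable classification of realizable palindromic $\delta$-vectors in all dimensions. If you want to match the paper's stated partial result, it would be worth recasting your recursion under the decreasing-rational-function hypothesis and seeing whether the resulting finitely-parametrized family can be exhausted.
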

\begin{rem}
We can confirm the previous conjecture under the assumption that $M_n$ is a decreasing rational function of $n$.
\end{rem}

\begin{exm}[Cross Polytope]\label{cross}
Let $\mathrm{Cr}_d$ be the convex hull of $\{ \pm \eb_i : 1 \leq i \leq d\}$. 
Then $\mathrm{Cr}_d$ is a reflexive polytope of dimension $d$, called the {\em cross polytope}. 
Its Ehrhart polynomial can be computed as follows: 
$$H_{\mathrm{Cr}_d}(m)=\sum_{k=0}^d \binom{d}{k}\binom{m+d-k}{d}.$$ 
Moreover, we see that $\{H_{\mathrm{Cr}_d}(m)\}_{d=0}^\infty$ satisfies the recurrence relation
\begin{align}\label{crossrec}
H_{\mathrm{Cr}_d}(m)=\frac{1}{d}(2m+1)H_{\mathrm{Cr}_{d-1}}(m)+\frac{d-1}{d}H_{\mathrm{Cr}_{d-2}}(m)\;\text{ for all }d \geq 2. 
\end{align}In fact, the direct computations show the following: 
\begin{align*}
&\sum_{m=0}^\infty\left(\frac{2m+1}{d}\sum_{k=0}^{d-1} \binom{d-1}{k}\binom{m+d-1-k}{d-1}
+\frac{d-1}{d}\sum_{k=0}^{d-2} \binom{d-2}{k}\binom{m+d-2-k}{d-2}\right)t^m \\
&=\frac{2t}{d}\cdot\left(\frac{(1+t)^{d-1}}{(1-t)^d}\right)'+\frac{1}{d}\cdot\frac{(1+t)^{d-1}}{(1-t)^d}
+\frac{d-1}{d}\cdot\frac{(1+t)^{d-2}}{(1-t)^{d-1}} =\frac{(1+t)^d}{(1-t)^{d+1}}\\
&=\sum_{m=0}^\infty \left(\sum_{k=0}^d \binom{d}{k}\binom{m+d-k}{d}\right)t^m.
\end{align*}
Therefore, the Ehrhart polynomial of the cross polytope $\mathrm{Cr}_d$ has all roots on $R$ and $H_{\mathrm{Cr}_{d+1}}$ is $R$-interlaced by $H_{\mathrm{Cr}_d}$. 

Note that since the zeros of the polynomial $\sum_{k=0}^d \binom{d}{k}t^k=(1+t)^k$ in $t$ 
are all $-1$, we can also prove that $H_{\mathrm{Cr}_d}(m)$ has the roots on $R$ by applying \cite{rod02} as mentioned in the introduction. 
Moreover, the $H_{\mathrm{Cr}_d}(m)$ coincides with $H_G$ for any tree $G$ with $(d+1)$ vertices. 
\end{exm}

\begin{exm}[Dual of the Stasheff Polytope]\label{associahedra}
Let $\mathrm{St}_d$ be the convex hull of $\{ \pm \eb_i : 1 \leq i \leq d\} \cup \{\eb_i+\cdots+\eb_j : 1 \leq i < j \leq d\}$. 
Then $\mathrm{St}_d$ is a reflexive polytope of dimension $d$. 
This polytope is the dual polytope of the so-called {\em Stasheff polytope} ({\em associahedron}). 
For more detailed information, see, e.g., \cite{FZ}. In \cite{Ath}, its Ehrhart polynomial is calculated as follows: 
\begin{align*}
H_{\mathrm{St}_d}(m)=\sum_{k=0}^d \frac{1}{d+1}\binom{d+1}{k+1}\binom{d+1}{k}\binom{m+d-k}{d}.
\end{align*}
Similar to Example \ref{cross}, it follows from the direct computations that $\{H_{\mathrm{St}_d}(m)\}_{d=0}^\infty$ satisfies 
the recurrence relation
$$H_{\mathrm{St}_d}(m)=\frac{2d+1}{d(d+2)}(2m+1)H_{\mathrm{St}_{d-1}}(m)+\frac{(d-1)(d+1)}{d(d+2)}H_{\mathrm{St}_{d-2}}(m)\;\text{ for all }d \geq 2.$$
Therefore, $H_{\mathrm{St}_d}$ has all roots on $R$ 
and $H_{\mathrm{St}_{d+1}}$ is $R$-interlaced by $H_{\mathrm{St}_d}$. 
\end{exm}

\begin{exm}[Classical Root Polytope of Type A]\label{typeA}
Let ${\bf A}_d$ be the convex hull of the root system of type A, i.e., 
$\{\pm \eb_i : 1 \leq i \leq d\} \cup \{ \pm (\eb_i+\cdots+\eb_j) : 1 \leq i < j \leq d\}$. 
Then ${\bf A}_d$ is a reflexive polytope of dimension $d$, called the {\em classical root polytope of type A}. 
For more detailed information, see \cite{ABHPS}.
Note that the definitions of this paper and \cite{ABHPS} look different, but these are unimodularly equivalent. 
Its Ehrhart polynomial is calculated in \cite[Thm. 1]{BDV} and also in \cite[Thm. 2]{ABHPS} as follows: $$H_{{\bf A}_d}(m)=\sum_{k=0}^d \binom{d}{k}^2\binom{m+d-k}{d}.$$ 
It follows that $\{H_{{\bf A}_d}(m)\}_{d=0}^\infty$ satisfies the recurrence relation
$$H_{{\bf A}_d}(m)=\frac{2d-1}{d^2}(2m+1)H_{{\bf A}_{d-1}}(m)+\frac{(d-1)^2}{d^2}H_{{\bf A}_{d-2}}(m)\;\text{ for all }d \geq 2.$$
Therefore, $H_{{\bf A}_d}$ has all roots on $R$ and $H_{{\bf A}_{d+1}}$ is $R$-interlaced by $H_{{\bf A}_d}$. 
\end{exm}

\begin{exm}[Classical Root Polytope of Type C]\label{typeC}
Let ${\bf C}_d$ be the convex hull of the root system of type C, i.e., 
$\{\pm \eb_i : 1 \leq i \leq d \} \cup \{ \pm (\eb_i+\cdots+\eb_{j-1}) : 1 \leq i < j \leq d \} \cup \{\pm(2\eb_i+\cdots+2\eb_{d-1}+\eb_d) : 1 \leq i \leq d-1\}$. 
Then ${\bf C}_d$ is a reflexive polytope of dimension $d$, called the {\em classical root polytope of type C}. 
For more detailed information, see \cite{ABHPS}. Its Ehrhart polynomial is calculated in \cite[Thm. 1]{BDV} and also in \cite[Thm. 2]{ABHPS} as follows: 
$$H_{{\bf C}_d}(m)=\sum_{k=0}^d \binom{2d}{2k}\binom{m+d-k}{d}.$$ It follows that $\{H_{{\bf C}_d}(m)\}_{d=0}^\infty$ satisfies 
the recurrence relation $$H_{{\bf C}_d}(m)=\frac{2}{d}(2m+1)H_{{\bf C}_{d-1}}(m)+\frac{d-2}{d}H_{{\bf C}_{d-2}}(m)\;\text{ for all }d \geq 2.$$
We conclude that $H_{{\bf C}_d}$ has all roots on $R$ and $H_{{\bf C}_{d+1}}$ is $R$-interlaced by $H_{{\bf C}_{d}}$. 
\end{exm}

\begin{rem}
Let ${\bf B}_d$ (resp. ${\bf D}_d$) be the classical root polytope of type B (resp. D) of dimension $d$. 
In \cite{BDV}, ${\bf B}_d$ and ${\bf D}_d$ are also discussed. It is proved in \cite[Thm. 1]{BDV} that 
\begin{align*}
HS_{{\bf B}_d}(t)=\frac{\sum_{k=0}^d\binom{2d+1}{2k}t^k-2dt(1+t)^{d-1}}{(1-t)^{d+1}} \;\text{ and }\;
HS_{{\bf D}_d}(t)=\frac{\sum_{k=0}^d\binom{2d}{2k}t^k-2dt(1+t)^{d-2}}{(1-t)^{d+1}}. 
\end{align*}
We see that the $\delta$-vector of ${\bf B}_d$ is not palindromic, so the roots of $H_{{\bf B}_d}$ are not distributed symmetrically with respect to $R$. The $\delta$-vector of ${\bf D}_d$ is palindromic, so the roots of $H_{{\bf D}_d}$ are distributed symmetrically with respect to $R$. However, they do not have to lie on $R$.
Below we present how the roots of $H_{{\bf B}_6}$ (on the left) and $H_{{\bf D}_6}$ (on the right) are distributed.
\adjustbox{trim={.0\width} {.5\height} {0.0\width} {.0\height},clip}%
{\includegraphics[scale=0.45]{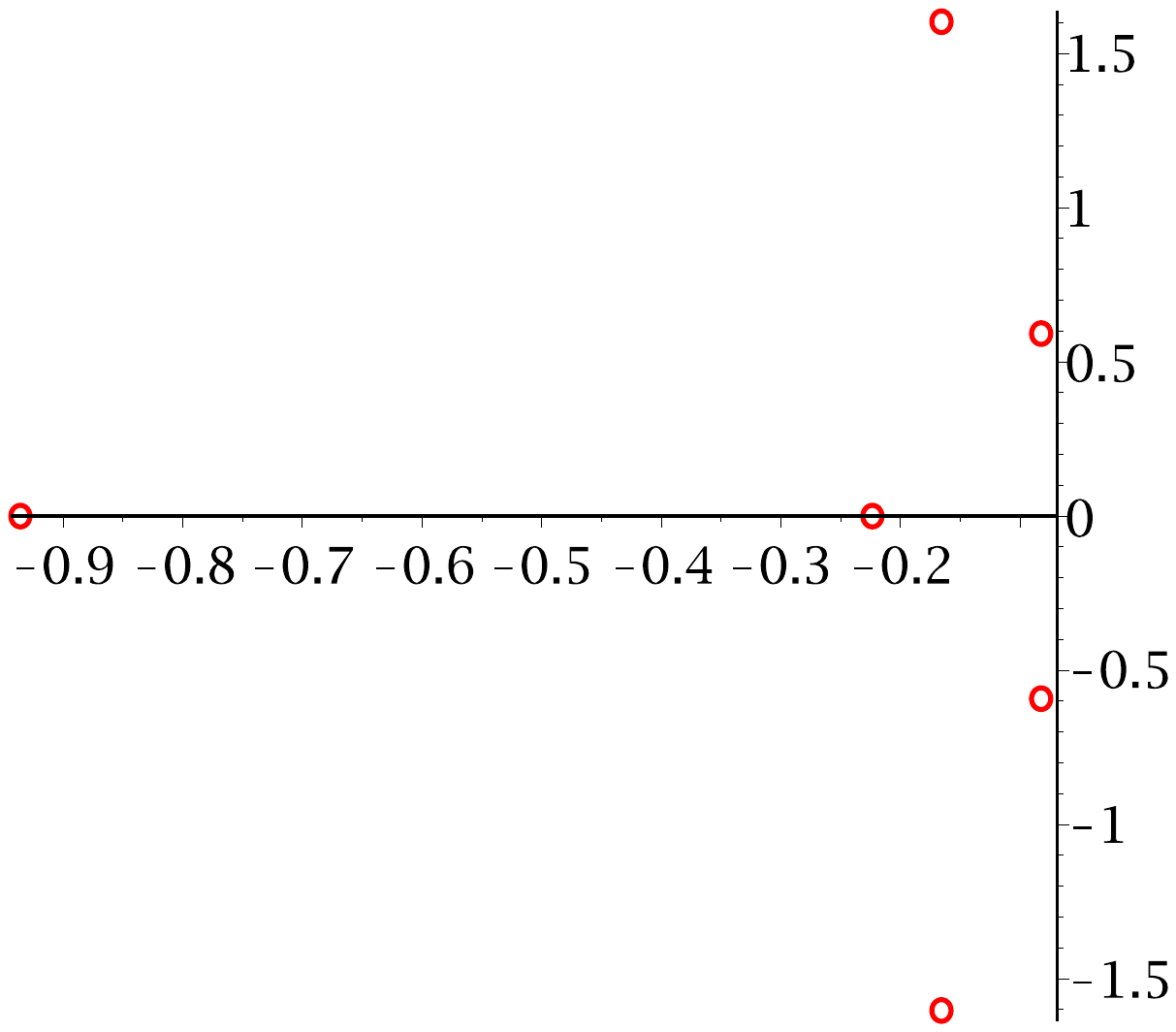}
\includegraphics[scale=0.45]{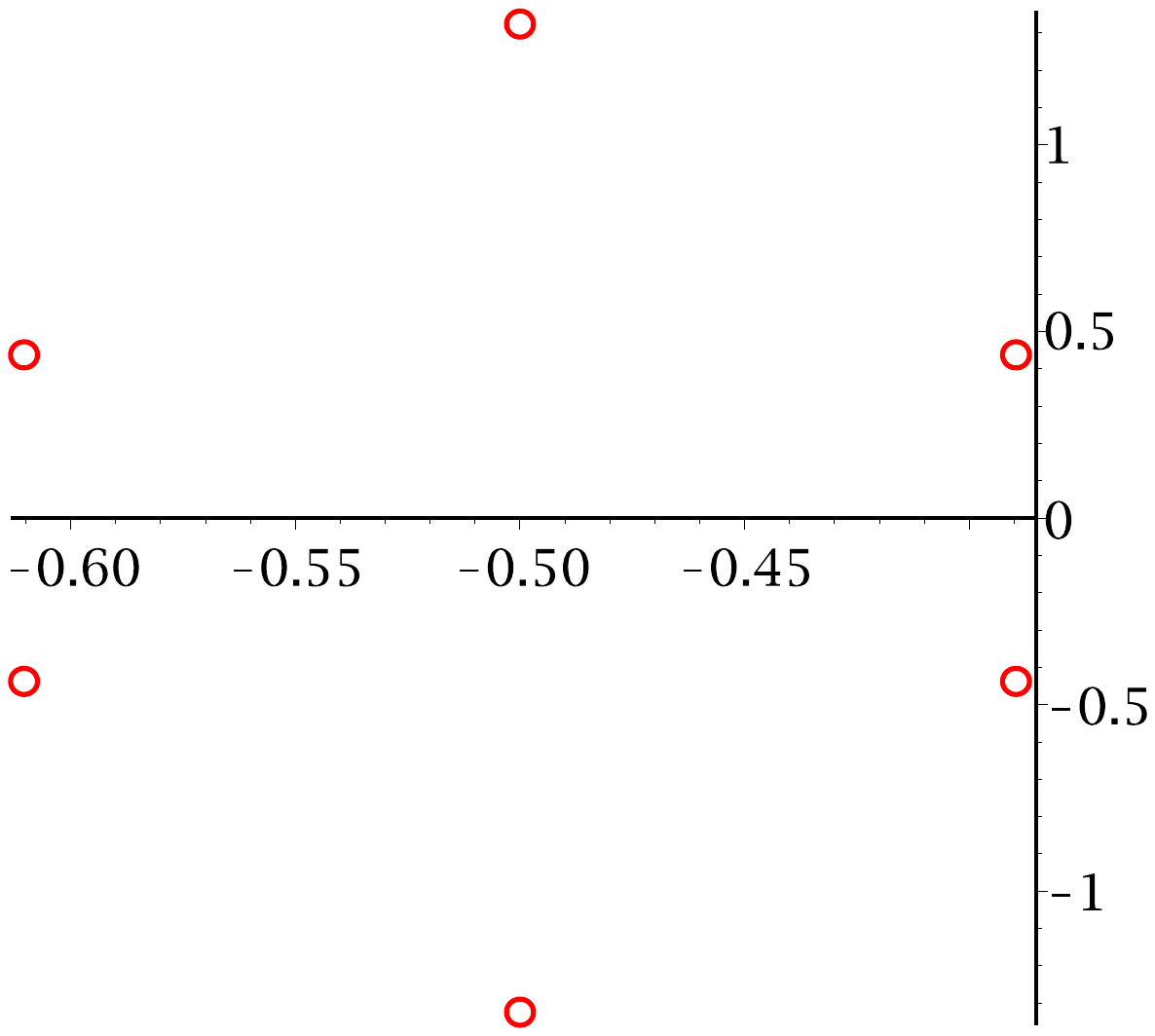}}
\end{rem}
As a consequence of Example \ref{typeA}, we also obtain the following: 
\begin{cor}[{cf. \cite[Conj. 4.8]{MHNOH}}]\label{cor:complete}
For any complete graph $K_n$, $H_{K_n}$ has all roots on $R$, i.e., \cite[Conj. 4.8]{MHNOH} is true. 
\end{cor}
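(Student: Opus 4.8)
The plan is to reduce the statement to Example \ref{typeA} by showing that the symmetric edge polytope $P_{K_n}$ of the complete graph is unimodularly equivalent to the classical root polytope ${\bf A}_{n-1}$ of type A, and then to quote the real-rootedness already established there. First I would unwind the definition from Section \ref{sec:PPG}: the edges of $K_n$ are exactly the pairs $\{v_i,v_j\}$ with $1\le i<j\le n$, so $P_{K_n}\subset\RR^n$ is the convex hull of the origin together with the vectors $\{e_{v_i}-e_{v_j}: i\neq j\}$. These are precisely the roots of the root system of type $A_{n-1}$, all lying in the linear hyperplane $\{x:\sum_i x_i=0\}$, whose affine span has dimension $n-1$.

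Next I would exhibit the explicit unimodular identification. Let $\Lambda=\ZZ^n\cap\{x:\sum_i x_i=0\}$ be the lattice induced on this hyperplane; since every lattice point of $s\,P_{K_n}$ already lies in the hyperplane, the Ehrhart polynomial of $P_{K_n}$ with respect to $\ZZ^n$ coincides with the Ehrhart polynomial of the full-dimensional polytope $P_{K_n}\subset\Lambda\otimes\RR$. The simple roots $\alpha_i:=e_{v_i}-e_{v_{i+1}}$ for $1\le i\le n-1$ form a lattice basis of $\Lambda$, so the map sending $\alpha_i\mapsto \eb_i$ is a lattice isomorphism $\Lambda\xrightarrow{\sim}\ZZ^{n-1}$, hence unimodular. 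Writing each root in this basis gives $e_{v_i}-e_{v_j}=\alpha_i+\alpha_{i+1}+\cdots+\alpha_{j-1}$ for $i<j$, so under this map the vertex set of $P_{K_n}$ becomes $\{\pm(\eb_i+\cdots+\eb_{j-1}):1\le i<j\le n\}$. Reindexing via $j'=j-1$ turns this into $\{\pm\eb_i:1\le i\le n-1\}\cup\{\pm(\eb_i+\cdots+\eb_{j'}):1\le i<j'\le n-1\}$, which is exactly the vertex set of ${\bf A}_{n-1}$ as written in Example \ref{typeA}. This is the unimodular equivalence already alluded to there.

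Since the Ehrhart polynomial is invariant under unimodular equivalence of lattice polytopes, I then conclude \[H_{K_n}=H_{{\bf A}_{n-1}}.\] Example \ref{typeA} establishes that $H_{{\bf A}_d}$ has all of its roots on $R$ for every $d\geq 0$ (via the displayed recurrence and Lemma \ref{lem:interl}), and specializing to $d=n-1$ yields the corollary.

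The only real content is the bookkeeping in the middle step: verifying that passing to simple-root coordinates on the sum-zero sublattice $\Lambda$ is genuinely unimodular and carries the $A_{n-1}$ roots onto the generators used to define ${\bf A}_{n-1}$. I expect this reindexing to be the sole (and minor) obstacle; once the identification $P_{K_n}\cong{\bf A}_{n-1}$ is in place, the conclusion is immediate from the previously proved statement for type A.
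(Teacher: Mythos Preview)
Your proposal is correct and follows exactly the paper's own argument: the paper's proof simply asserts that ${\bf A}_d$ is unimodularly equivalent to the symmetric edge polytope of $K_{d+1}$ and then invokes Example \ref{typeA}. You have supplied the explicit details of that equivalence (simple-root coordinates on the sum-zero sublattice), which the paper leaves to the reader; one tiny quibble is that the definition in Section \ref{sec:PPG} takes $P_G$ to be the convex hull of the edge vectors alone, not ``together with the origin,'' but since $0$ lies in that hull this does not affect anything.
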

\begin{proof}
The classical root polytope of type A, ${\bf A}_d$, is unimodularly equivalent to the symmetric edge polytope of complete graphs. 
Hence, Example \ref{typeA} directly proves the assertion. 
\end{proof}

\begin{rem}
Orthogonal polynomial systems are also studied in relation to hypergeometric functions \cite{koekoek2010hypergeometric}. We note that the Ehrhart polynomials described in Examples \ref{cross}, \ref{associahedra}, \ref{typeA} and \ref{typeC}  can be presented also in that language:
\begin{align*}
&H_{\mathrm{Cr}_d}(m)=\binom{m+d}{m} \;_2F_1(-d,-m; -d-m; 1),\\
&H_{\mathrm{St}_d}(m)=\binom{m+d}{m} \;_3F_2(-d-1,-d,-m; 2, -d-m; 1),\\
&H_{{\bf A}_d}(m)=\binom{m+d}{m} \;_3F_2(-d,-d,-m; 1,-d-m; 1),\\
&H_{{\bf C}_d}(m)=\binom{m+d}{m} \;_5F_4(1/2-d,1/2-d,-d,-d,-m; 1/2,1/2,1,-d-m; 1), 
\end{align*}
where ${}_rF_s$ denotes the hypergeometric function. 
\end{rem}

\section{Ehrhart polynomials and bipartite graphs}
As we have seen in the previous section OPS provide strong methods to prove that zeros of polynomials lie on $R$. However, as conjectured e.g.~in \cite[Conj.~4.7]{MHNOH} there exist families with roots on $R$, but not giving rise to OPS. Indeed, one can check that the polynomials $H_{2,n}$ do not satisfy the recurrence relations in Theorem \ref{thm:Favard}. As we shall see, for this setting, the correct generalization of orthogonal polynomial systems are interlacing polynomials - cf.~Lemmas \ref{lem1}, \ref{lem2}, \ref{lem4}, Theorem \ref{thm:H3n} and Conjecture \ref{con:main}.
\subsection{Complete bipartite graphs of type $(2,n)$ and $(3,n)$}
In this section we show that the Ehrhart polynomials $H_{2,n}$ and $H_{3,n}$ of the symmetric edge polytope of the complete bipartite graph of type $(2,n)$ and $(3,n)$ respectively has all of its roots on the line $R=R_{-\frac{1}{2}}$.

First, we determine the corresponding Hilbert series.
We start by degenerating the binomial ideal to the monomial one.
For any complete bipartite graph $K_{a,b}$ let us order the vertices in each part $v_1<\dots<v_a$, $w_1<\dots<w_b$ and further $v_i<w_j$. There is an induced order on edges: $(v_i,w_j)<(v_{i'},w_{j'})$ if $i<i'$ or $i=i'$ and $j<j'$, analogously for the other orientation and $(v_i,w_j)<(w_{i'},v_{j'})$ for any $i,j,i',j'$. Further we declare the unique interior point $0$ of the associated polytope $P_{a,b}$ to be smaller than any point corresponding to the edges.   
\begin{lema}\label{lem:QGB}
The ideal $I_{a,b}$ with respect to the induced degrevlex order has a quadratic Gr\"obner basis corresponding to:
\begin{enumerate}
\item $(v_i,w_j)(w_j,v_i)-2\cdot 0$,
\item $(v_i,w_j)(w_j,v_{i'})-(v_i,w_1)(w_1,v_{i'})$,
\item $(w_i,v_j)(v_j,w_{i'})-(w_i,v_1)(v_1,w_{i'})$,
\item $(v_i,w_j)(v_{i'},w_{j'})-(v_i,w_{j'})(v_{i'},w_{j})$ for $i>i'$ and $j<j'$,
\item $(w_i,v_j)(w_{i'},v_{j'})-(w_i,v_{j'})(w_{i'},v_{j})$ for $i>i'$ and $j<j'$.
\end{enumerate}
The leading terms are presented on the left. 
\end{lema}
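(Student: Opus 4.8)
The plan is to regard $I_{a,b}$ as the toric (lattice) ideal of the point configuration given by the lattice points of $P_{a,b}$, namely the origin $0$ and the directed edges $e_{v_i}-e_{w_j}$, $e_{w_j}-e_{v_i}$, each appended with a homogenizing coordinate $1$. With this reading, two monomials in the edge/origin variables are congruent modulo $I_{a,b}$ exactly when the associated lattice vectors and the total degrees agree. Every binomial in the list is manifestly such a relation: in (ii) both $(v_i,w_j)(w_j,v_{i'})$ and $(v_i,w_1)(w_1,v_{i'})$ represent $e_{v_i}-e_{v_{i'}}$ in degree two, in (i) both $(v_i,w_j)(w_j,v_i)$ and $0^2$ represent $0$ in degree two, and (iii)--(v) are the analogous equalities of sums of edge vectors. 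Hence all five families lie in $I_{a,b}$. The first, routine, task is to confirm the displayed leading terms from the variable order $0<(v_\bullet,w_\bullet)<(w_\bullet,v_\bullet)$ (lexicographic within each block) together with the degrevlex tie-break; one checks that the left monomials win, and that the nontrivial members of (ii), (iii) are precisely those with $j\ge 2$, whose reduction pushes the shared intermediate vertex down to $w_1$, resp.\ $v_1$.

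For the Gröbner property I would invoke the standard criterion for toric ideals rather than run Buchberger blindly. Writing $M=\langle \mathrm{LT}(g):g\in G\rangle$, and recalling that $G\subseteq I_{a,b}$, the set $G$ is a Gröbner basis of $I_{a,b}$ \emph{and} generates it as soon as no two distinct monomials outside $M$ are congruent modulo $I_{a,b}$. The reason is that the $I_{a,b}$-standard monomials form a transversal of the congruence classes and, since $M\subseteq\mathrm{in}_\prec(I_{a,b})$, they are contained among the $M$-standard ones; the displayed condition forces the two sets to coincide, whence $M=\mathrm{in}_\prec(I_{a,b})$. Thus everything reduces to a uniqueness statement for the monomials avoiding all five leading patterns.

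Concretely, an $M$-standard monomial is a product $0^{\,c}\cdot(\text{forward edges})\cdot(\text{backward edges})$ subject to: by (iv) and (v) the forward edges and the backward edges are each \emph{non-crossing}, i.e.\ sortable so that both indices increase weakly; by (i) no edge occurs in both orientations; and by (ii), (iii) no vertex $w_j$ (resp.\ $v_j$) with $j\ge2$ is at once the head of a forward edge and the tail of a backward edge (resp.\ the reverse). The rewriting rules $\mathrm{LT}(g)\mapsto(\text{tail of }g)$ each strictly decrease the monomial in $\prec$, so reduction terminates and every fibre contains at least one such canonical monomial; I would then prove that it contains \emph{only} one. This uniqueness is exactly confluence of the rewriting system, which one can verify either through Buchberger's criterion---using the coprimality criterion to discard pairs with disjoint leading monomials and reducing the finitely many genuine overlaps (two crossings sharing an edge, a crossing meeting a length-two path, two length-two paths sharing their midpoint, and the collision of (i) with (ii))---or, equivalently and more transparently, by directly reconstructing the non-crossing, head/tail-restricted edge collection from its net displacement vector and degree.

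The step I expect to be the main obstacle is precisely this confluence/uniqueness claim. The sorting relations (iv), (v) interact with the orientation-mixing relations (i)--(iii) through the vertices they share, so the overlap case analysis (or, dually, the argument that the net vector forces the canonical form) is where all the combinatorial content lives; membership and the identification of leading terms are mere bookkeeping by comparison. Once confluence is established, the toric criterion immediately yields that $G$ is a quadratic Gröbner basis generating $I_{a,b}$ with initial terms exactly as listed, which is what then feeds the Hilbert series count in the sequel.
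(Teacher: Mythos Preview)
The paper does not actually prove this lemma: its ``proof'' reads in full, ``We leave the easy proof as the exercise for the reader. A more general statement can be found in \cite{ohsugi2014centrally}.'' So there is no method on the paper's side to compare against; your outline is already considerably more detailed than what the authors supply.

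Your strategy is the standard and correct one for toric ideals: verify that the listed binomials lie in $I_{a,b}$ and have the advertised leading terms, then show $G$ is a Gr\"obner basis by proving that the monomials outside $M=\langle\mathrm{LT}(g)\rangle$ are in bijection with the lattice-point/degree fibres (equivalently, that the rewriting system is confluent). Your combinatorial description of the $M$-standard monomials---non-crossing forward and backward edge multisets, no edge in both orientations, and no $w_j$ (resp.\ $v_j$) with $j\ge2$ serving simultaneously as head of a forward edge and tail of a backward edge---is accurate, and the two routes you name (checking the finitely many genuine S-pair overlaps, or reconstructing the canonical form directly from the net displacement) are exactly how one finishes. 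You are also right that this confluence step is where the content lies; neither you nor the paper carries it out, and the paper defers it to the cited reference of Ohsugi. If you want a self-contained write-up, the direct reconstruction is the cleaner route: from the net vector one reads off which $v_i$'s and $w_j$'s are sources and which are sinks, the no-crossing condition then forces the matching within each orientation uniquely, and the head/tail constraints at $w_j,v_j$ for $j\ge2$ pin down how any residual cancellation must route through $w_1,v_1$, with the leftover degree absorbed by the variable $0$.
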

\begin{proof}
We leave the easy proof as the exercise for the reader. A more general statement can be found in \cite{ohsugi2014centrally}.
\end{proof}
The previous lemma motivates the following definition.
\begin{df}[correct graph, $f(a,b,k)$]
A directed bipartite graph without any subgraphs corresponding to leading terms in the Lemma \ref{lem:QGB} is called \emph{correct}. Let $f(a,b,k)$ be the number of correct $(a,b)$-bipartite graphs with exactly $k$ edges (counted with multiplicities).
\end{df}
By the standard degeneration argument to the initial ideal we obtain:
\begin{cor}\label{cor:graphstobecounted}
The Ehrhart polynomial $H_{a,b}$ on value $k$ counts the number of $(a,b)$-bipartite correct graphs with at most $k$ edges (possibly repeated).
\end{cor}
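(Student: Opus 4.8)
The plan is to identify the value $H_{a,b}(k)$ with the dimension of a graded piece of the toric algebra and then to extract a combinatorial basis from the Gr\"obner degeneration of Lemma~\ref{lem:QGB}. Write $S$ for the polynomial ring with one variable $x_0$ for the interior point $0$ and one variable $x_e$ for each directed edge $e$ (the remaining vertices of $P_{a,b}$), graded so that every generator has degree one, i.e.\ by the level in the cone over $P_{a,b}$. Let $R=S/I_{a,b}$ be the associated semigroup algebra. The first step is to record that $\dim_{\CC} R_k = H_{a,b}(k)$ for all $k\ge 0$. This is the standard identity between the Hilbert function of the toric algebra and the Ehrhart polynomial, and it is valid as soon as $P_{a,b}$ is normal. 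Normality comes for free here: all the leading terms listed in Lemma~\ref{lem:QGB} are squarefree, so $\mathrm{in}(I_{a,b})$ is a squarefree monomial ideal, which by the standard correspondence exhibits a regular unimodular triangulation of $P_{a,b}$, and in particular the integer decomposition property.

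The second step is the degeneration itself: passing to the initial ideal preserves the graded Hilbert function, so $\dim_{\CC} R_k = \dim_{\CC}(S/\mathrm{in}(I_{a,b}))_k$ equals the number of degree-$k$ monomials that are \emph{standard}, i.e.\ divisible by none of the leading terms of Lemma~\ref{lem:QGB}. The third step is to read these standard monomials as graphs. A degree-$k$ monomial factors uniquely as $x_0^{k-j}\cdot m$, where $m$ is a monomial of degree $j\le k$ in the edge variables; such an $m$ is exactly the datum of a directed $(a,b)$-bipartite multigraph with $j$ edges (counted with multiplicity). Since $x_0$ occurs in none of the five leading terms, the full monomial is standard if and only if $m$ is, and $m$ is standard precisely when the corresponding multigraph avoids every subgraph appearing as a leading term --- that is, when it is correct in the sense of the preceding definition. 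Summing over $j$, the number of standard monomials of degree $k$ is $\sum_{j=0}^{k} f(a,b,j)$, which is the number of correct $(a,b)$-bipartite graphs with at most $k$ edges, as claimed.

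Almost all of the real work is hidden in Lemma~\ref{lem:QGB}; granting it, the argument is a routine squarefree-degeneration count. The one place that deserves care is the bookkeeping role of $x_0$: it is precisely the freedom to pad the total degree with powers of the interior-point variable that converts ``standard monomials of degree exactly $k$'' into ``correct graphs with at most $k$ edges'', so one must verify that $x_0$ truly appears in no leading term (it does not). The only other point to be checked is the opening equality $\dim_{\CC}R_k=H_{a,b}(k)$, and as noted the squarefreeness of the initial terms already supplies the normality that this equality requires.
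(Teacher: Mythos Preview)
Your argument is correct and is precisely the ``standard degeneration argument'' the paper invokes without details: pass to the initial ideal to preserve the Hilbert function, note that the squarefree leading terms of Lemma~\ref{lem:QGB} give normality (so the Hilbert function computes $H_{a,b}$), and identify standard monomials with correct multigraphs padded by powers of $x_0$. The paper gives no further proof, so your write-up simply fills in what it leaves implicit.
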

Despite this very explicit combinatorial description, in general we do not know how to determine the Ehrhart polynomial. Still, in specific situations we may find combinatorial recursive relations and easily prove a given formula satisfies them.
\begin{prop}\label{prop:HS}
The Hilbert series for the complete graph bipartite $K_{1,n}$ equals
$$HS_{1,n}(t)=\frac{(1+t)^{n}}{(1-t)^{n+1}}.$$

The Hilbert series for the complete bipartite graph $K_{2,n}$ equals
$$HS_{2,n}(t)=\frac{(1+t)^{n-1}(1+2nt+t^2)}{(1-t)^{n+2}}.$$

The Hilbert series for the complete bipartite graph $K_{3,n}$ equals $$HS_{3,n}(t)=\frac{(1+t)^{n-2}(1+4nt+(3n^2-n+4)t^2+4nt^3+t^4)}{(1-t)^{n+3}}.$$
\end{prop}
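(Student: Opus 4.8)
The plan is to reduce everything to the combinatorial count provided by Corollary~\ref{cor:graphstobecounted} and then to evaluate the resulting generating functions. Since $H_{a,b}(k)$ counts the correct $(a,b)$-graphs with at most $k$ edges, we have $H_{a,b}(k)=\sum_{j=0}^{k}f(a,b,j)$ with $f(a,b,0)=1$, so summing the geometric series gives
\[
HS_{a,b}(t)=\sum_{k\ge 0}H_{a,b}(k)\,t^{k}=\frac{1}{1-t}\sum_{j\ge 0}f(a,b,j)\,t^{j}=\frac{F_{a,b}(t)}{1-t},
\]
where $F_{a,b}(t)=\sum_{j\ge 0}f(a,b,j)t^{j}$ is the generating function for correct graphs weighted by their number of edges. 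Thus it suffices to compute $F_{a,n}(t)$ for $a=1,2,3$ and to check that $F_{a,n}(t)/(1-t)$ equals the asserted rational function.

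For $K_{1,n}$ the count is immediate: among the patterns of Lemma~\ref{lem:QGB} only~(i) imposes a constraint when $a=1$ (patterns (iv),(v) require two distinct $v$-vertices, (iii) requires a $v$-vertex of index $\ge 2$, and (ii) collapses to~(i)). The only restriction is that for each $j$ we never use both orientations of the edge $\{v_1,w_j\}$. The $n$ edges are therefore independent, and each contributes $1+\tfrac{t}{1-t}+\tfrac{t}{1-t}=\tfrac{1+t}{1-t}$, namely either unused or used with positive multiplicity in exactly one of its two orientations. Hence $F_{1,n}(t)=\bigl(\tfrac{1+t}{1-t}\bigr)^{n}$ and $HS_{1,n}(t)=(1+t)^{n}/(1-t)^{n+1}$, as claimed.

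For $K_{2,n}$ and $K_{3,n}$ I would first read off the structure of correct graphs from Lemma~\ref{lem:QGB}. Patterns (ii) and (iii) force every $w_j$ with $j\ge 2$ and every $v_i$ with $i\ge 2$ to be \emph{pure}, i.e.\ to carry only incoming or only outgoing edges, while $w_1$ and $v_1$ may be mixed; pattern~(i) restricts each vertex pair to a single orientation; and patterns (iv),(v) impose a monotonicity (no-inversion) condition relating the $v$-indices and $w$-indices of the outgoing edges, and separately of the incoming edges. Using this description I would organize the enumeration either by conditioning on the type (source, sink, or mixed) of $w_1$ together with the location of the single admissible order-inversion on the $v$-side, or by setting up a recursion in $n$ obtained by adjoining the largest vertex $w_n$ and tracking how (iv),(v) interact with the edges already present. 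Summing the resulting geometric series in the multiplicities and simplifying should yield $HS_{2,n}(t)=(1+t)^{n-1}(1+2nt+t^{2})/(1-t)^{n+2}$ and the stated degree-four numerator for $K_{3,n}$. These formulas can be cross-checked against the coincidences $H_{2,1}=H_{1,2}$, $H_{3,1}=H_{1,3}$ and $H_{3,2}=H_{2,3}$ forced by $K_{a,b}\cong K_{b,a}$, under which the candidate numerators indeed agree after cancellation.

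The main obstacle is precisely the coupling introduced by the sorting constraints (iv) and (v): unlike the $K_{1,n}$ case, the edges at different $w$-vertices are no longer independent, so the count fails to factor over the edges. For $a=3$ the bookkeeping is heaviest, since there are more admissible inversion patterns among the three $v$-vertices, and it is exactly their interplay that produces the nontrivial quadratic coefficient $3n^{2}-n+4$ in the numerator. I expect the cleanest route is to isolate a recursion in $n$ whose validity is transparent from the combinatorial description and then to verify, by a routine but careful manipulation of the generating functions, that the proposed closed forms satisfy it together with the base cases established above.
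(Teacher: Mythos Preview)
Your treatment of $K_{1,n}$ is complete and essentially the same argument the paper relies on. Your structural analysis of correct graphs for $a=2,3$ is also accurate: the purity of $v_i$ ($i\ge2$) and $w_j$ ($j\ge2$) and the no-inversion constraints from (iv),(v) are exactly what governs the count.

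However, for $K_{2,n}$ and $K_{3,n}$ what you have written is an outline, not a proof. You propose two organizing schemes but carry out neither; phrases like ``should yield'' and ``I expect the cleanest route is'' mark precisely the step that needs to be done. The paper's proof follows your second suggestion (recursion in $n$ by adjoining the last vertex on the $n$-side), but the substance lies in making the recursion explicit. One removes the top vertex on the $n$-side; if it has degree zero the remaining graph is counted by $f(3,n-1,k)$, and otherwise it is pure (say outgoing). The point you have not exploited is that, by the no-inversion constraints, the admissible ways to attach its edges depend only on the \emph{types} (incoming/outgoing/degree zero) of the two non-special vertices on the $3$-side. Splitting $f(3,n-1,j)$ according to these types and recombining gives the closed recursion
\[
f(3,n,k)=f(3,n-1,k)+\sum_{j=0}^{k-1}\Bigl(2f(3,n-1,j)+3(k-j)f(2,n-1,j)+(k-j)^{2}f(1,n-1,j)\Bigr),
\]
which no longer carries any auxiliary type labels. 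Passing to generating functions turns the convolutions with $1$, $(k-j)$, $(k-j)^{2}$ into multiplication by $\frac{1}{1-t}$, $\frac{t}{(1-t)^{2}}$, $\frac{t(1+t)}{(1-t)^{3}}$, yielding a first-order recursion for $HS_{3,n}$ in terms of $HS_{3,n-1}$, $HS_{2,n-1}$, $HS_{1,n-1}$; the claimed closed form is then verified by a one-line polynomial identity. Your alternative idea of conditioning on the type of $w_1$ and the inversion pattern is plausible but is not what the paper does and is not obviously simpler, since $w_1$ being mixed couples the two orientation classes.

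In short: your plan coincides with the paper's, but the missing ingredient is the explicit case analysis over the $3$-side vertex types that collapses the bookkeeping into a single recursion amenable to the generating-function computation.
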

\begin{proof}
The first part is well-known, cf.~Example \ref{cross}. The second was stated in \cite{MHNOH}.

The proof of the third has two steps. In the first we determine a recursive relation the Ehrhart polynomial must satisfy. In the second we deduce the Hilbert series form it.

1) Our aim is to find a formula for $f(3,n,k)$. We note that in the graphs we count, any vertex apart from $v_1$ and $w_1$ is either an in or out vertex. To simplify the terminology (e.g.~degree of the vertex) we consider the graphs we count as \emph{simple} and directed edges as (positively) weighted. 

There are $f(3,n-1,k)$ graphs for which $v_n$ is of degree 0.

Consider a correct graph $G$. By removing $v_n$ and edges adjacent to it we obtain a $(3,n-1)$-bipartite correct graph $\tilde G$. Let us introduce the notation to count different types of correct graphs according to whether $w_3$ and $w_2$ are outgoing or incoming. We denote by $f_{xy}(3,n,j)$ where $x,y\in\{i,o,z\}$ the number of correct $(3,n)$ bipartite graphs with $j$ edges such that $w_3$ is of type $x$ (i.e.~\emph{i}ncoming, \emph{o}utgoing or of degree \emph{z}ero) and $w_2$ is of type $y$. From now on we count graphs for which $v_n$ is not of degree zero and as both cases are similar we assume it is outgoing. According to the type of $\tilde G$ we sum up:
\begin{enumerate}
\item $\sum_{j=0}^{k-1}f_i(3,n-1,j)$. Here we count graphs for which $w_3$ is incoming in $\tilde G$ - note that in this case the only possibility to obtain $G$ is to add the edge $(v_n,w_3)$ (with multiplicity $k-j$),
\item $\sum_{j=0}^{k-1}(k-j+1)f_{zi}(3,n-1,j)$. Here we multiply by $(k-j+1)$ as we can distribute the weight $(k-j)$ among the edges $(v_n,w_3)$ and $(v_n,w_2)$.
\item $\sum_{j=0}^{k-1}f_{oi}(3,n-1,j)$
\item $\sum_{j=0}^{k-1}{{k-j+2}\choose 2}f_{zz}(3,n-1,j)$. Here we distribute among all three edges.
\item $\sum_{j=0}^{k-1}(k-j+1)f_{zo}(3,n-1,j)$
\item $\sum_{j=0}^{k-1}f_{oo}(3,n-1,j)$
\item $\sum_{j=0}^{k-1}(k-j+1)f_{oz}(3,n-1,j)$
\end{enumerate}
We obtain the same when $v_n$ is incoming, apart from the fact that we have to repalce $i$ and $o$ in all formulas. Summing up all we get:
$$f(3,n,k)=f(3,n-1,k)+\sum_{j=0}^{k-1}(2f(3,n-1,j)+3(k-j)f(2,n-1,j)+(k-j)^2f(1,n-1,j)).$$



2) To pass from the recursive relation to the Hilbert series we proceed as follows. First we note that the Ehrhart polynomial $H_{3,n}(i)=\sum_{k=0}^i f(3,n,k)$ and equivalently $f(3,n,j)=H_{3,n}(j)-H_{3,n}(j-1)$. Thus summing up the recursive relation we obtain:
\begin{eqnarray}\label{recformforH3n}\nonumber H_{3,n}(i)&=&H_{3,n-1}(i)+\sum_{0\leq j<k\leq i}(2f(3,n-1,j)+3(k-j)f(2,n-1,j)+(k-j)^2f(1,n-1,j))\nonumber\\&=&
-H_{3,n-1}(i)+\sum_{k=0}^{i} 2H_{3,n-1}(k)+\\ \nonumber
&\sum_{j=0}^i&(\frac{3}{2}(i-j+1)(i-j)f(2,n-1,j)+\frac{(i-j)(i-j+1)(2i-2j+1)}{6}f(1,n-1,j))
\end{eqnarray}
At this point we could substitute all the values on the right hand side and conclude. This however involves a lot of nontrivial computation on binomial coefficients. A better way is to pass to the Hilbert series. Precisely we multiply both sides of the equality (\ref{recformforH3n}) by $t^i$ and sum up over all natural $i$ obtaining:
\begin{eqnarray}\label{recformforHS3n} HS_{3,n}(t)&=&-HS_{3,n-1}(t)+2\frac{HS_{3,n-1}(t)}{1-t}+\frac{3tHS_{2,n-1}(t)}{(1-t)^2}+\frac{(t^2+t)HS_{1,n-1}(t)}{6(1-t)^3}\\
\nonumber &=&\frac{(1+t)HS_{3,n-1}(t)}{1-t}+\frac{3tHS_{2,n-1}(t)}{(1-t)^2}+\frac{(t^2+t)HS_{1,n-1}(t)}{(1-t)^3}
\end{eqnarray}
Here the four Hilbert series in (\ref{recformforHS3n}) (in order) correspond exactly to four terms in the recursive relation (\ref{recformforH3n}). Now the claim of the proposition is reduced to verifying that:
\begin{eqnarray*}
1+4nt+(3n^3-n+4)t^2+4nt^3+t^4=\\1+4(n-1)t+(3(n-1)^2-(n-1)+4)t^2+4(n-1)t^3+t^4\\+
3t(1+2(n-1)t+t^2)+(t^2+t)(1+t)\end{eqnarray*}
\end{proof}
So far we have used recursive relations to determine Hilbert series. However, it is also useful to go the other way round and determine further recursive relations using the known formulas. 
\begin{prop}\label{prop:relations}
The following relations hold:
\begin{eqnarray}
   H_{2,n}(k)&=&\frac{1}{2} (2k+1) H_{1,n}(k) + \frac{1}{2} H_{1,n-1}(k),\label{relH2n1}\\
H_{2,n}(k)&=&\frac{1}{n}(2k+1) H_{2,n-1}(k)+\frac{1}{2n}(n H_{1,n-1}(k)+(n-2)(2k+1)H_{1,n-2}(k)),\label{relH2n2}\\
H_{3,n+1}(k) &=& \left( \frac{3 n^2+13 n+16}{4 \left(n^2+5 n+6\right)}k +  \frac{3 n^2+13 n+16}{8 \left(n^2+5 n+6\right)}\right)H_{2,n+1}(k)\\\nonumber &+&  \frac{n^3+13 n^2+18 n}{8 (n-1) \left(n^2+5 n+6\right)}H_{2,n}(k) + \frac{4 n^3+9n^2-13 n-32}{8 (n-1) \left(n^2+5 n+6\right)}H_{1,n+1}(k)\label{recH3n}\end{eqnarray}
\end{prop}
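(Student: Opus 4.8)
The plan is to derive all three identities from the explicit Hilbert series of Proposition~\ref{prop:HS}, as the remark immediately before the statement suggests. The key observation is that, since the Ehrhart series is the generating function of the Ehrhart polynomial, $HS_{a,n}(t)=\sum_{k\geq 0}H_{a,n}(k)\,t^k$, we have $H_{a,n}(k)=[t^k]\,HS_{a,n}(t)$; thus an identity among the polynomials $H_{a,n}$, understood as equality of their values for every integer $k\geq 0$, is equivalent to the corresponding identity among the generating functions, i.e.\ among rational functions of $t$. The only nonstandard feature is the recurring factor $(2k+1)$. Since $\sum_{k\geq 0}k\,H_{a,n}(k)\,t^k=t\,HS_{a,n}'(t)$, multiplication by $(2k+1)$ at the level of coefficients corresponds to applying the operator $2t\tfrac{d}{dt}+1$ to the series. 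Hence I would replace each summand $(2k+1)H_{a,n}(k)$ by $2t\,HS_{a,n}'(t)+HS_{a,n}(t)$ and each bare $H_{a,n}(k)$ by $HS_{a,n}(t)$, turning each relation into a single rational-function identity to be checked.

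To make the verification mechanical, I would first compute the derivatives by logarithmic differentiation. Each $HS_{a,n}$ has the form $(1+t)^{p}(1-t)^{-q}P_{a,n}(t)$ with $P_{a,n}$ of degree at most $4$, so $HS_{a,n}'/HS_{a,n}=\tfrac{p}{1+t}+\tfrac{q}{1-t}+\tfrac{P_{a,n}'}{P_{a,n}}$, and $2t\,HS_{a,n}'(t)+HS_{a,n}(t)$ is again a rational function whose denominator is a product of powers of $(1-t)$ and $(1+t)$. For~\eqref{relH2n1} this already closes the argument: after substituting $2t\,HS_{1,n}'+HS_{1,n}$ for $(2k+1)H_{1,n}(k)$, bringing the three terms over the common denominator $(1-t)^{n+2}$ and extracting the factor $(1+t)^{n-1}$, the numerator simplifies to $1+2nt+t^{2}$, which is exactly the numerator of $HS_{2,n}$.

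Relations~\eqref{relH2n2} and~\eqref{recH3n} are handled identically; only the amount of bookkeeping grows. For~\eqref{relH2n2} one aligns $HS_{2,n}$, $HS_{2,n-1}$, $HS_{1,n-1}$ and $HS_{1,n-2}$ over the common denominator $(1-t)^{n+2}$ and pulls out the common factor $(1+t)^{n-3}$ (note that applying $2t\tfrac{d}{dt}+1$ lowers the $(1+t)$-power of a term by one), reducing the claim to a polynomial identity in $t$ with coefficients polynomial in $n$. Relation~\eqref{recH3n} is the heaviest. Here the coefficient of $H_{2,n+1}(k)$ equals $\tfrac{3n^2+13n+16}{8(n^2+5n+6)}(2k+1)$, so it too is produced by the operator $2t\tfrac{d}{dt}+1$, while the coefficients of $H_{2,n}(k)$ and $H_{1,n+1}(k)$ are plain rational functions of $n$. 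The four series $HS_{3,n+1}$, $HS_{2,n+1}$, $HS_{2,n}$ and $HS_{1,n+1}$ carry denominators $(1-t)^{n+4},(1-t)^{n+3},(1-t)^{n+2},(1-t)^{n+2}$ and $(1+t)$-powers $n-1,n,n-1,n+1$ respectively; I would clear to the common denominator $(1-t)^{n+4}$, factor out $(1+t)^{n-1}$, and multiply through by $8(n-1)(n^2+5n+6)$ to remove the rational coefficients, leaving a polynomial identity in $t$ whose coefficients are polynomials in $n$ to be verified by expansion.

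The main obstacle is purely computational and concentrated in~\eqref{recH3n}: one must differentiate the degree-$4$ numerator of $HS_{3,n+1}$ correctly, keep track of the three distinct $(1+t)$-powers when forming the common denominator, and confirm that the $n$-dependent coefficients conspire so that the two sides agree. Because every step is a finite manipulation of rational functions, once the dictionary $(2k+1)\leftrightarrow 2t\tfrac{d}{dt}+1$ is in place the verification is entirely mechanical and could, if desired, be delegated to a computer algebra system.
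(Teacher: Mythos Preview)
Your proposal is correct and follows essentially the same route as the paper: both arguments pass from the polynomial identities to the corresponding identities of Ehrhart series using the dictionary $(2k+1)\leftrightarrow 2t\tfrac{d}{dt}+1$, and then verify the resulting rational-function identities (the paper delegates this to Mathematica, while you sketch the hand computation but note it is mechanical). Your slightly more detailed description of clearing denominators and tracking the $(1+t)$-powers is just an explicit unpacking of what the paper's Mathematica code does.
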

\begin{proof}
Looking at Ehrhart polynomials it may be surprising that any relations of this sort hold, as the system of equalities one gets seems overdetermined for large $n$. However, after passing to the Hilbert series we see that we only need to present one low degree polynomial as a linear combination of other polynomials. Instead of doing computation by hand one can use Mathematica \cite{Mathematica}.
First we check the relations \ref{relH2n1} and \ref{relH2n2} by subtracting the left hand side from the right hand side. Using simple algebra relations among polynomials and Hilbert series, such as the fact that $kH(k)$ has Hilbert series $t(\frac{d(HS(t))}{dt})$ we verify that:
\begin{verbatim}
In: H1[k_,t_]:=((1+t)^k)/((1-t)^(k+1))
In: H2[k_,t_]:=(1+t)^(k-1)*(1+2k*t+t^2)/(1-t)^(k+2)
In: H3[k_,t_]:=(1+t)^(k-2)*(1+4k*t+(3*k^2-k+4)*t^2+4k*t^3+t^4)/(1-t)^(k+3)
In: Simplify[2*H2[k,t]-(2*t*D[H1[k,t],t]+H1[k,t]+H1[k- 1,t])]
Out: 0
In: Simplify[k*H2[k,t]-(2*t*D[H2[k-1,t],t]+H2[k-1,t]+(k/2)*H1[k-1,t]+
((k-2)/2)*(2*t*D[H1[k-2,t],t]+H1[k-2,t]))]
Out: 0
In: Simplify[H3[k+1,t]-((16+13*k+3*k^2)/(4*(6+5*k+k^2))*t*D[H2[k+1,t],t]+
(16+13*k+3*k^2)/(8*(6+5*k+k^2))*H2[k+1,t]+(18*k+13*k^2+k^3)/(8*(k-1)*(6+5*k+k^2))*
H2[k,t]+(-32-13*k+9*k^2+4*k^3)/(8*(k-1)*(6+5*k+k^2))*H1[k+1,t])]
Out: 0
\end{verbatim}

In Appendix \ref{Appendix} we show how to not only check the relations, but determine the coefficients without knowing them.
\end{proof}

\begin{lema}\label{lem1}
 $H_{1,n}$ has all its roots on $R$ and $R$-interlaces $H_{1,n+1}$.
\end{lema}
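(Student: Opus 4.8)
The plan is to reduce the statement to the cross-polytope computation already carried out in Example \ref{cross}. The complete bipartite graph $K_{1,n}$ is a tree on $n+1$ vertices, and Example \ref{cross} records that $H_{\mathrm{Cr}_n}$ coincides with $H_G$ for any tree $G$ on $n+1$ vertices; hence $H_{1,n}=H_{\mathrm{Cr}_n}$. This identification is also transparent on the level of Hilbert series: Proposition \ref{prop:HS} gives $HS_{1,n}(t)=(1+t)^{n}/(1-t)^{n+1}$, which is precisely the cross-polytope series computed in Example \ref{cross}. Granting this, the conclusion of Example \ref{cross}---that $H_{\mathrm{Cr}_d}$ has all its roots on $R$ and that $H_{\mathrm{Cr}_{d+1}}$ is $R$-interlaced by $H_{\mathrm{Cr}_d}$---translates verbatim into the assertion of the lemma, since ``$H_{1,n}$ $R$-interlaces $H_{1,n+1}$'' is by definition the statement that the degree-$(n+1)$ polynomial $H_{1,n+1}$ is $R$-interlaced by the degree-$n$ polynomial $H_{1,n}$.

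If one prefers a self-contained argument, I would instead extract a three-term recurrence directly. From $HS_{1,n}(t)=(1+t)^{n}/(1-t)^{n+1}$ one reads off recurrence (\ref{crossrec}), namely $H_{1,n}=\frac{1}{n}(2m+1)H_{1,n-1}+\frac{n-1}{n}H_{1,n-2}$, which is already in the normalized shape $f_j=M_j(2x+1)f_{j-1}+(1-M_j)f_{j-2}$ with $M_n=\frac{1}{n}$. Since reflexivity of $P_{1,n}$ yields the symmetry $(-1)^{n}H_{1,n}(x)=H_{1,n}(-x-1)$ via Proposition \ref{equiv}(c), each $H_{1,n}$ satisfies the hypothesis $(-1)^{\deg f_i}f_i(x)=f_i(2d-x)$ of Lemma \ref{lem:interl} with $d=-\frac{1}{2}$. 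One then runs an induction on $n$: the base cases $H_{1,0}=1$ and $H_{1,1}=2m+1$ (whose only root $-\frac{1}{2}$ lies on $R$) start the chain, and the inductive step feeds the monic normalizations of $H_{1,n},H_{1,n-1},H_{1,n-2}$ into Lemma \ref{lem:interl} to propagate both ``all roots on $R$'' and the interlacing upward.

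The only points requiring care are the sign and range conditions demanded by Lemma \ref{lem:interl}: after rescaling to monic polynomials the coefficient multiplying $f_{j-2}$ must be positive, which amounts to $0\le M_n\le 1$, and here $M_n=\frac{1}{n}\in[0,1]$ for all $n\ge 1$, so the hypothesis holds throughout. This is exactly the criterion highlighted in the paragraph following Theorem \ref{thm:Favard}. I expect the main (though still light) obstacle to be bookkeeping the interlacing direction through the change of variables $x\mapsto \mathrm{i}x+d$ that underlies Lemma \ref{lem:interl}, and confirming that the base case is compatible with the degree shift. Neither presents a genuine difficulty, so in the end the lemma follows essentially by quoting Example \ref{cross}.
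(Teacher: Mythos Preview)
Your proposal is correct and takes essentially the same approach as the paper: the paper's proof is the single sentence ``This was shown in Example~\ref{cross},'' and your first paragraph unpacks precisely this identification $H_{1,n}=H_{\mathrm{Cr}_n}$ and quotes the interlacing conclusion of that example. Your optional self-contained argument via recurrence~(\ref{crossrec}) and Lemma~\ref{lem:interl} is also fine---it simply spells out what Example~\ref{cross} already does.
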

\begin{proof}
 This was shown in Example \ref{cross}.
\end{proof}
\begin{lema}\label{lem2}
 $H_{2,n}$ has all its roots on $R$ and is $R$-interlaced by $H_{1,n}$ and $(2k+1)H_{1,n-1}$.
\end{lema}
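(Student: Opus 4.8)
The plan is to read the relation (\ref{relH2n1}) as an instance of the three-term recurrence in Lemma \ref{lem:interl} and to use Lemma \ref{lem1} as the input. Rewriting (\ref{relH2n1}) as
\[
H_{2,n}=\left(x+\tfrac12\right)H_{1,n}+\tfrac12\,H_{1,n-1},
\]
this already has the shape $f_1=(x+a)f_2+b\cdot f_3$ required there, with $a=\tfrac12=-d$ for $d=-\tfrac12$, so that $R_d=R$. First I would record the degrees from Proposition \ref{prop:HS}: the dimensions of $P_{1,n-1}$, $P_{1,n}$ and $P_{2,n}$ are $n-1$, $n$ and $n+1$, so $\deg H_{2,n}=\deg H_{1,n}+1=\deg H_{1,n-1}+2$, matching the degree pattern of the lemma.

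Next I would pass to monic polynomials; this changes nothing about interlacing, which depends only on the location of the roots. Since each $H_{a,b}$ is the Ehrhart polynomial of a full-dimensional polytope, its leading coefficient is the (positive) normalized volume. Dividing $H_{2,n}$, $H_{1,n}$ and $H_{1,n-1}$ by their leading coefficients turns the displayed identity into an identity $\hat f_1=(x+\tfrac12)\hat f_2+b\,\hat f_3$ among monic polynomials; comparing top coefficients shows that the leading coefficients of $H_{2,n}$ and $H_{1,n}$ agree, and that $b$ is a positive ratio of volumes, so $b>0$ as the lemma demands. The symmetry hypothesis is immediate: $P_{1,n-1}$, $P_{1,n}$ and $P_{2,n}$ are symmetric edge polytopes of bipartite graphs, hence reflexive, so by Proposition \ref{equiv}(c) each polynomial satisfies $(-1)^{\deg f_i}f_i(x)=f_i(-1-x)$, which is exactly the required condition with $d=-\tfrac12$.

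With all hypotheses of Lemma \ref{lem:interl} verified, the statement reduces to checking its condition (ii), namely that $H_{1,n}$ is $R$-interlaced by $H_{1,n-1}$. This is precisely Lemma \ref{lem1} applied with $n$ replaced by $n-1$, which asserts that $H_{1,n-1}$ has all its roots on $R$ and $R$-interlaces $H_{1,n}$. Hence condition (i) holds, giving that $H_{2,n}$ has all its roots on $R$ and is $R$-interlaced by $H_{1,n}$. Finally, the concluding clause of Lemma \ref{lem:interl} yields that $(x+\tfrac12)H_{1,n-1}$ $R$-interlaces $H_{2,n}$; since interlacing is insensitive to multiplication by the positive scalar $2$, this is the assertion that $H_{2,n}$ is $R$-interlaced by $(2k+1)H_{1,n-1}$.

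I do not expect a genuine obstacle here: the entire argument is the matching of (\ref{relH2n1}) to the template of Lemma \ref{lem:interl} followed by an appeal to Lemma \ref{lem1}. The only points that need care are the bookkeeping of leading coefficients, to confirm $b>0$ and the coincidence of the top coefficients of $H_{2,n}$ and $H_{1,n}$, and the harmless positive rescaling relating $(x+\tfrac12)H_{1,n-1}$ to $(2k+1)H_{1,n-1}$.
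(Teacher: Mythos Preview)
Your proposal is correct and follows exactly the approach of the paper, which simply cites relation (\ref{relH2n1}) and Lemma \ref{lem:interl}; you have merely spelled out the verifications (degrees, positivity of $b$ after normalizing to monic polynomials, the reflexivity symmetry giving $d=-\tfrac12$, and the input from Lemma \ref{lem1}) that the paper leaves implicit. The final step, reading the concluding clause of Lemma \ref{lem:interl} as the interlacing by $(2k+1)H_{1,n-1}$ up to a positive scalar, is exactly right.
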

\begin{proof}
 This follows from the relation (\ref{relH2n1}) and Lemma \ref{lem:interl}.
 \end{proof}

\begin{lema}\label{lem4}
  $H_{2,n}$ interlaces $H_{2,n+1}$.
\end{lema}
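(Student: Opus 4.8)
The plan is to realize $H_{2,n}$ as a nonnegative combination of two polynomials that are \emph{already} known to $R$-interlace $H_{2,n+1}$, and then to invoke the convexity of interlacing from Lemma \ref{lem:interlcone}. The degrees match up correctly for this: $\deg H_{2,n}=n+1=\deg H_{2,n+1}-1$, and the natural interlacers below will also have degree $n+1$.

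First I would record the interlacers. Applying Lemma \ref{lem2} with $n$ replaced by $n+1$, both $H_{1,n+1}$ and $(2k+1)H_{1,n}$ have all their roots on $R$ and $R$-interlace $H_{2,n+1}$; moreover $H_{2,n+1}$ itself has all its roots on $R$, so the interlacing statement we are after is well-posed. Next I would produce the key identity
\[
 H_{2,n}=\frac{n-1}{2n}\,(2k+1)H_{1,n}+\frac{n+1}{2n}\,H_{1,n+1}.
\]
This can be obtained by combining relation (\ref{relH2n1}), namely $H_{2,n}=\tfrac12(2k+1)H_{1,n}+\tfrac12 H_{1,n-1}$, with the cross-polytope recurrence (\ref{crossrec}) applied at $d=n+1$, which reads $H_{1,n+1}=\tfrac1{n+1}(2k+1)H_{1,n}+\tfrac{n}{n+1}H_{1,n-1}$ after using the identification $H_{1,n}=H_{\mathrm{Cr}_n}$ (the star $K_{1,n}$ is a tree on $n+1$ vertices, cf.\ Example \ref{cross}). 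Solving the recurrence for $H_{1,n-1}$ and substituting into relation (\ref{relH2n1}) yields the displayed identity; alternatively it can be checked directly on the level of Hilbert series, exactly as in the computations of Proposition \ref{prop:relations}.

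Finally, I would observe that for every $n\geq 1$ both coefficients $\tfrac{n-1}{2n}$ and $\tfrac{n+1}{2n}$ are nonnegative. Since interlacing is invariant under multiplication by a positive scalar, I may rescale all polynomials involved to be monic; the identity then exhibits a positive multiple of $H_{2,n}$ as a nonnegative combination of two monic polynomials, each of which $R$-interlaces $H_{2,n+1}$. Lemma \ref{lem:interlcone} then gives that $H_{2,n}$ $R$-interlaces $H_{2,n+1}$, which is the claim. I expect the only real obstacle to be locating this identity, i.e.\ recognizing that $H_{2,n}$ lies in the convex cone generated by the two standard interlacers $H_{1,n+1}$ and $(2k+1)H_{1,n}$ of $H_{2,n+1}$; the remaining work (nonnegativity of the coefficients and the monic normalization so that Lemma \ref{lem:interlcone} applies verbatim) is routine bookkeeping.
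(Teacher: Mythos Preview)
Your argument is correct, and the displayed identity
\[
 H_{2,n}=\frac{n-1}{2n}\,(2k+1)H_{1,n}+\frac{n+1}{2n}\,H_{1,n+1}
\]
indeed follows exactly as you say from (\ref{relH2n1}) combined with the cross-polytope recurrence (\ref{crossrec}) at $d=n+1$; once you have it, Lemma~\ref{lem2} (at $n+1$) and Lemma~\ref{lem:interlcone} finish the job, with the monic normalization being harmless since the leading coefficients of $H_{1,n+1}$ and $(2k+1)H_{1,n}$ are positive.

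This is, however, not the route taken in the paper. There the proof goes through relation~(\ref{relH2n2}),
\[
 H_{2,n}=\tfrac{1}{n}(2k+1)H_{2,n-1}+\tfrac{1}{2n}\bigl(n\,H_{1,n-1}+(n-2)(2k+1)H_{1,n-2}\bigr),
\]
arguing first via Lemma~\ref{lem:interlcone} that the bracketed term (a nonnegative combination of $H_{1,n-1}$ and $(2k+1)H_{1,n-2}$, both known interlacers of $H_{2,n-1}$ by Lemma~\ref{lem2}) $R$-interlaces $H_{2,n-1}$, and then invoking Lemma~\ref{lem:interl} on this three-term recursion to conclude that $H_{2,n-1}$ $R$-interlaces $H_{2,n}$. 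Your approach bypasses both the more elaborate relation~(\ref{relH2n2}) and Lemma~\ref{lem:interl}: by writing $H_{2,n}$ directly as a nonnegative combination of two known interlacers of $H_{2,n+1}$, you need only Lemma~\ref{lem:interlcone}. The price is that you must derive one extra identity (by substituting (\ref{crossrec}) into (\ref{relH2n1})), but this is cheaper than establishing~(\ref{relH2n2}), so your proof is in fact shorter and more self-contained.
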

\begin{proof}
 Since both $H_{1,n-1}$ and $(2k+1)H_{1,n-2}$ interlace $H_{2,n-1}$, the claim follows from Lemma \ref{lem:interlcone}, Lemma \ref{lem:interl} and relation (\ref{relH2n2}).
\end{proof}

\begin{thm}\label{thm:H3n}
 $H_{3,n}$ has all its roots on $R$ and is interlaced by $H_{2,n}$.
\end{thm}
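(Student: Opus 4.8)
The plan is to feed the third relation of Proposition \ref{prop:relations}, namely \eqref{recH3n}, into Lemma \ref{lem:interl}. The obstacle to doing so directly is that \eqref{recH3n} is not a three-term identity: besides the factor $H_{2,n+1}$ multiplied by a linear form, it carries \emph{two} distinct lower-degree polynomials, $H_{2,n}$ and $H_{1,n+1}$. The central idea is to lump these two into a single polynomial by means of Lemma \ref{lem:interlcone}, thereby reshaping \eqref{recH3n} into the exact form required by Lemma \ref{lem:interl}. First I would record the degrees read off from the denominators in Proposition \ref{prop:HS}: $\deg H_{1,m}=m$, $\deg H_{2,m}=m+1$, $\deg H_{3,m}=m+2$. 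Thus in \eqref{recH3n} the polynomials $H_{2,n+1},H_{2,n},H_{1,n+1}$ have degrees $n+2,n+1,n+1$ while $H_{3,n+1}$ has degree $n+3$, and the linear factor $\tfrac{A}{2}(2k+1)$ multiplying $H_{2,n+1}$ is, up to the positive scalar $A=\frac{3n^2+13n+16}{4(n^2+5n+6)}$, equal to $k-d$ with $d=-\tfrac12$, so that $R_d=R$.

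Next I would establish the two interlacings needed to combine the lower-degree terms. Lemma \ref{lem4} gives that $H_{2,n+1}$ is $R$-interlaced by $H_{2,n}$, and Lemma \ref{lem2} applied with index $n+1$ gives that $H_{2,n+1}$ is $R$-interlaced by $H_{1,n+1}$. After rescaling $H_{2,n}$ and $H_{1,n+1}$ to be monic and checking that the coefficients $B=\frac{n^3+13n^2+18n}{8(n-1)(n^2+5n+6)}$ and $C=\frac{4n^3+9n^2-13n-32}{8(n-1)(n^2+5n+6)}$ are positive for $n\ge 2$ (a routine sign analysis of these explicit rational functions), Lemma \ref{lem:interlcone} yields that $H_{2,n+1}$ is $R$-interlaced by the combination $g:=B\,H_{2,n}+C\,H_{1,n+1}$. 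In particular $g$ then has all its zeros on $R$, and $\deg g=n+1$ because the leading coefficients of $H_{2,n}$ and $H_{1,n+1}$ are positive and hence do not cancel.

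Then I would set up Lemma \ref{lem:interl} with $d=-\tfrac12$ and with $f_1,f_2,f_3$ the monic rescalings of $H_{3,n+1}$, $H_{2,n+1}$, and $g$. Dividing \eqref{recH3n} by the (positive) leading coefficient of $H_{3,n+1}$, which equals $A$ times that of $H_{2,n+1}$, turns the identity into $f_1=(x+\tfrac12)f_2+b\,f_3$ with $b>0$, the positivity of $b$ being exactly $B,C>0$. The symmetry hypotheses $(-1)^{\deg f_i}f_i(x)=f_i(-1-x)$ follow from Proposition \ref{equiv}(c), since each of $H_{3,n+1},H_{2,n+1},H_{2,n},H_{1,n+1}$ is the Ehrhart polynomial of a reflexive polytope; this symmetry is inherited by $g$ because $H_{2,n}$ and $H_{1,n+1}$ share the parity $(-1)^{n+1}=(-1)^{\deg g}$. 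The previous paragraph gives condition (ii) of Lemma \ref{lem:interl}, so condition (i) follows: $H_{3,n+1}$ has all its roots on $R$ and is $R$-interlaced by $H_{2,n+1}$. This proves the theorem for all $n\ge 3$, while the cases $n=1,2$ follow directly from the graph isomorphisms $K_{3,1}\cong K_{1,3}$ and $K_{3,2}\cong K_{2,3}$ together with Lemmas \ref{lem1}, \ref{lem2} and \ref{lem4}.

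The main obstacle is precisely the merging step. Lemma \ref{lem:interl} only accepts a genuine three-term recurrence, whereas \eqref{recH3n} carries two independent lower-degree terms; the whole argument hinges on the fact that both of them interlace $H_{2,n+1}$ \emph{from below} and enter with \emph{nonnegative} coefficients, so that Lemma \ref{lem:interlcone} collapses them into one interlacing polynomial $f_3$ of the exact degree $n+1$. Consequently the positivity of $B$ and $C$ is the single quantitative input that must be verified by hand: it simultaneously guarantees $b>0$ in Lemma \ref{lem:interl} and the non-cancellation of leading terms that keeps $\deg g=n+1$.
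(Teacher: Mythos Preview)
Your proposal is correct and follows exactly the paper's approach: the paper's one-line proof invokes precisely the two interlacings $H_{2,n}\,|\,H_{2,n+1}$ (Lemma~\ref{lem4}) and $H_{1,n+1}\,|\,H_{2,n+1}$ (Lemma~\ref{lem2}) and then appeals to relation~\eqref{recH3n}, leaving implicit the merging via Lemma~\ref{lem:interlcone} and the application of Lemma~\ref{lem:interl} that you have spelled out in full, including the positivity of the coefficients and the base cases.
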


\begin{proof}
 $H_{2,n+1}$ is interlaced by $H_{2,n}$ and $H_{1,n+1}$, so the claim follows from relation (\ref{recH3n}).
\end{proof}

We end this section with the following conjecture which encapsulates many of our results.

\begin{con}\label{con:main}\
\begin{enumerate}
\item For any complete $k$-partite graph $G$ of type $(a_1,\dots,a_k)$ the Ehrhart polynomial $H_{a_1,\dots,a_k}$ has roots on $R$.
\item Suppose $a_1\geq\dots\geq a_k$. Any two Ehrhart polynomials $H_{a_1,\dots,a_k}$, $H_{a_1-1,a_2,\dots,a_k}$ $R$-interlace. 
\end{enumerate}
\end{con}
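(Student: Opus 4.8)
The plan is to prove both parts simultaneously by strong induction on the total number of vertices $N=a_1+\dots+a_k$, with everything flowing out of part (2). As induction hypothesis I take a single uniform statement for all configurations of size strictly below $N$: the Ehrhart polynomial of any complete multipartite graph has all its roots on $R$, and it $R$-interlaces the Ehrhart polynomial of every configuration obtained by lowering one part by one. Since $H_{a_1,\dots,a_k}$ depends only on the multiset $\{a_1,\dots,a_k\}$, after each reduction we may re-sort and keep $a_1\geq\dots\geq a_k$, so the induction is genuinely on $N$. Granting the hypothesis, the inductive step will manufacture an identity that feeds Lemma \ref{lem:interl}, whose first equivalent statement simultaneously records that $H_{a_1,\dots,a_k}$ has all roots on $R$ and that it is $R$-interlaced by $H_{a_1-1,a_2,\dots,a_k}$ --- that is, both (1) and (2) at size $N$. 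The symmetry hypothesis of Lemma \ref{lem:interl} holds with $d=-\tfrac12$ for every reflexive Ehrhart polynomial by Proposition \ref{equiv}(c). The base cases are the stars (Lemma \ref{lem1}), type $(2,n)$ (Lemmas \ref{lem2}, \ref{lem4}), type $(3,n)$ (Theorem \ref{thm:H3n}), and the classification of reflexive polytopes in dimensions one and two.

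The engine is a vertex-deletion recursion generalizing Step 1 of the proof of Proposition \ref{prop:HS}. First I would extend the combinatorial description behind Lemma \ref{lem:QGB} and Corollary \ref{cor:graphstobecounted} from the bipartite to the complete $k$-partite setting, so that $f(a_1,\dots,a_k,\cdot)$ again counts \emph{correct} directed graphs; the requisite quadratic Gr\"obner basis is available in \cite{ohsugi2014centrally}. Deleting a single vertex $v$ from the largest part and summing over the admissible types (incoming, outgoing, or absent) of the edges joining $v$ to the other $k-1$ parts expresses the count on $(a_1,\dots,a_k)$ as the count on $(a_1-1,a_2,\dots,a_k)$ plus weighted sums of counts on configurations with strictly fewer vertices. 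Passing to Hilbert series exactly as in equations (\ref{recformforH3n})--(\ref{recformforHS3n}) and re-expressing the result as a relation among Ehrhart polynomials, in the spirit of Proposition \ref{prop:relations}, I expect a three-term-plus-remainder identity
\begin{equation*}
H_{a_1,\dots,a_k}=\alpha\,(2x+1)\,H_{a_1-1,a_2,\dots,a_k}+\sum_i \lambda_i\,H^{(i)},
\end{equation*}
where each $H^{(i)}$ is the Ehrhart polynomial of a strictly smaller configuration of degree $\deg H_{a_1-1,\dots,a_k}-1$, and $\alpha,\lambda_i$ are explicit rational functions of the part sizes. After normalizing to monic representatives the leading factor is automatically the monic $x+\tfrac12$, matching the forced value $a=-d$ in Lemma \ref{lem:interl} (itself a consequence of the symmetry $d a_d=2a_{d-1}$ of Proposition \ref{equiv}(d)).

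The interlacing conclusion then follows the template of Theorem \ref{thm:H3n}. Assuming $\alpha>0$ and $\lambda_i\geq 0$, and assuming (by the induction hypothesis at sizes $<N$) that $H_{a_1-1,a_2,\dots,a_k}$ $R$-interlaces each $H^{(i)}$, Lemma \ref{lem:interlcone} shows that $H_{a_1-1,a_2,\dots,a_k}$ $R$-interlaces the single polynomial $\sum_i\lambda_i H^{(i)}$, which in particular has all its roots on $R$. Feeding the displayed identity into Lemma \ref{lem:interl} with $(f_1,f_2,f_3)=\bigl(H_{a_1,\dots,a_k},\,H_{a_1-1,\dots,a_k},\,\sum_i\lambda_i H^{(i)}\bigr)$ (monic, with $b>0$ after normalization since all leading coefficients are positive normalized volumes) then yields that $H_{a_1-1,a_2,\dots,a_k}$ $R$-interlaces $H_{a_1,\dots,a_k}$ and that the latter has all roots on $R$, closing the induction. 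To make ``$H_{a_1-1,\dots}$ interlaces each $H^{(i)}$'' available I strengthen the induction hypothesis as above to a \emph{cross-interlacing} family: interlacing against lowering \emph{any} single part, not only the maximal one. This strengthening is essential, since --- already for type $(3,n)$ --- the remainder involves configurations obtained by lowering the smaller part (the polynomials $H_{2,n}$, $H_{1,n+1}$ in relation (\ref{recH3n})), and it is the uniform form of the bundle of interlacings gathered in Lemmas \ref{lem1}, \ref{lem2}, \ref{lem4}.

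The main obstacle is the second step in full generality: producing the identity with provably correct signs. For $k\le 3$ the closed forms in Proposition \ref{prop:HS} reduce the verification of $\alpha>0$ and $\lambda_i\ge 0$ to a finite symbolic computation, but no closed form for $H_{a_1,\dots,a_k}$ is known for general $k$, so the coefficients must be controlled intrinsically from the vertex-deletion combinatorics rather than read off a formula. I expect the nonnegativity of the remainder coefficients $\lambda_i$ --- equivalently, keeping the recursion inside the regime $0\le M_j\le 1$ flagged after Theorem \ref{thm:Favard} --- to be the delicate point, as the known examples already show these to be intricate rational functions whose positivity is not automatic. A second, more structural risk is the strengthened cross-interlacing hypothesis itself: the literal conjecture only asserts interlacing when the maximal part is lowered, whereas the argument requires interlacing against lowering an arbitrary part, and the truth of that stronger statement must be carried through the induction. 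A final, routine layer of bookkeeping is to check that every configuration on the right-hand side is genuinely one step smaller and that re-sorting never disturbs the degree comparisons demanded by Lemmas \ref{lem:interlcone} and \ref{lem:interl}.
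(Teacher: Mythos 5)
You are addressing Conjecture \ref{con:main}, which the paper does \emph{not} prove: it is left open, with confirmations only in the special cases $a_1=\dots=a_k=1$ (Corollary \ref{cor:complete}), $k=2$ with $a_2\in\{1,2\}$, and $k=2$ with $a_1\in\{1,2,3\}$ (Lemmas \ref{lem1}, \ref{lem2}, \ref{lem4}, Theorem \ref{thm:H3n}). Your proposal is therefore to be judged as an attempted proof of an open statement, and it does not succeed; it is a programme whose two load-bearing steps are left unestablished, and one of them is refuted by the paper itself. The first gap is the existence of the vertex-deletion identity $H_{a_1,\dots,a_k}=\alpha\,(2x+1)\,H_{a_1-1,a_2,\dots,a_k}+\sum_i\lambda_i H^{(i)}$ with $\alpha>0$ and $\lambda_i\geq 0$: this is precisely the hard content of the problem. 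The paper only obtains such relations case by case (Proposition \ref{prop:relations}) by symbolic manipulation of closed-form Hilbert series (Proposition \ref{prop:HS}), which are unavailable for general $k$-partite types, and you offer no mechanism for producing the identity or controlling the signs of its coefficients beyond hoping the vertex-deletion combinatorics will do so. The paper's own observation at the start of Section 4 --- that already $H_{2,n}$ fails the clean three-term recurrence of Theorem \ref{thm:Favard} --- shows the shape and coefficients of the needed identity are not stable across the family, which is exactly why the authors stop at a conjecture.

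The second gap is decisive: your strengthened induction hypothesis (\emph{cross-interlacing}, i.e.\ interlacing against lowering \emph{any} single part, not only the maximal one) is false, and the paper exhibits the counterexample. The polynomials $H_{3,3}$ and $H_{3,3,1}$ are computed explicitly after Conjecture \ref{con:main}, and their roots do not $R$-interlace; here $(3,3)$ is obtained from $(3,3,1)$ by lowering the third part (from $1$ to $0$), which is an instance of your strengthened statement. You flag this as a ``structural risk,'' but it is not a risk to be managed --- it is a counterexample to the very statement you propose to carry through the induction, so the induction cannot close as formulated. (The conjecture itself survives because part (2) only lowers the maximal part: $(3,3,1)$ is there compared with $(2,3,1)$, not with $(3,3)$.) Any repair must identify precisely which lowerings are admissible in the remainder terms, and this is genuinely subtle: the relations the paper actually uses, (\ref{relH2n1})--(\ref{recH3n}), do involve lowering non-maximal parts (e.g.\ $H_{1,n}$ interlacing $H_{2,n}$ in Lemma \ref{lem2}), so some cross-interlacings hold and others fail, and your argument provides no criterion separating them.
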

Our results confirm the conjecture for $a_1=\dots=a_k=1$ and also $k=2$, $a_2=1,2$.
Furthermore, numerical experiments suggest that $H_{a_1,\dots,a_k}$, $H_{a_1-1,a_2,\dots,a_k}$ $R$-interlace whenever $a_1\geq 2$. 
We have shown this to be true in the cases $k=2$ and $a_1=1,2,3$ and checked for all graphs with at most $10$ vertices.
\begin{exm}
$$H_{3,3}(x)=(9/10) x^{5}+(9/4) x^{4}+(16/3) x^{3}+(23/4) x^{2}+(113/30) x+1$$
$$H_{3,3,1}(x)=(49/60) x^{6}+(49/20) x^{5}+(37/6) x^{4}+(33/4) x^{3}+(481/60) x^{2}+(43/10) x+1$$
Their roots are approximately respectively:
$$\{-.5-1.7292i\}, \{-.5-.6602i\}, \{-.5\}, \{-.5+.6602i\},\{-.5+1.7292i\}$$
$$ \{-.5-1.6154i\},\{-.5-1.0638i\}, \{-.5-.2448i\}, \{-.5+.2448i\}, \{-.5+1.0638i\}, \{-.5+1.6154i\}$$ 
 and do not $R$-interlace.
\end{exm}
\subsection{Bipartite graphs of type $(2,n)$}
By a more carfully study of the involved polynomials we will show in this section that in fact the Ehrhart polynomial of every (not just complete) bipartite graph of type $(2,n)$ has all roots on $R$.
It will be derived as a special case for a more general statement for a larger family of polynomials. 
\begin{thm}\label{thm:any(2,n)}
Let $G$ be a bipartite graph of type $(2,n)$. Then all roots of $H_G$ belong to $R$.
\end{thm}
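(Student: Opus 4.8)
The plan is to reduce an arbitrary bipartite graph $G$ of type $(2,n)$ to a two-parameter family of Ehrhart polynomials that the interlacing machinery of Lemma \ref{lem:interl} can handle, mirroring what was done for the complete case in Lemma \ref{lem2}. Write the two left vertices as $v_1,v_2$ and classify each right vertex by whether it is adjacent to both of $v_1,v_2$, to exactly one of them, or to none. Isolated right vertices may be deleted without changing $H_G$, and a right vertex of degree one is a pendant edge. If $v_1$ and $v_2$ lie in different components (no right vertex is adjacent to both), then $G$ is a forest and $H_G$ coincides with a cross-polytope polynomial, so the claim is already covered by Lemma \ref{lem1}.

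The geometric heart of the reduction is that attaching a pendant edge amounts, up to unimodular equivalence, to taking the free sum with the segment $[-1,1]$: an integral shear sends the new pair of vertices $\pm(\eb_v-\eb_w)$ to $(0,\pm 1)$ while fixing the old polytope in the hyperplane $\{x_w=0\}$. Since the $h^*$-polynomial is multiplicative under free sums of reflexive polytopes and $[-1,1]$ contributes a factor $1+t$, each pendant edge multiplies the Ehrhart series by $\tfrac{1+t}{1-t}$. Letting $m$ be the number of non-isolated right vertices and $a\ge 1$ the number adjacent to both $v_1,v_2$, one peels off the $b+c=m-a$ leaves to reach $K_{2,a}$ and obtains, with Proposition \ref{prop:HS},
\[
 HS_G(t)=\left(\frac{1+t}{1-t}\right)^{m-a}\frac{(1+t)^{a-1}(1+2at+t^2)}{(1-t)^{a+2}}=\frac{(1+t)^{m-1}(1+2at+t^2)}{(1-t)^{m+2}}.
\]
Thus $H_G$ depends only on the pair $(m,a)$ with $1\le a\le m$; call it $G_{m,a}$. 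The extreme cases are $G_{m,1}=H_{\mathrm{Cr}_{m+1}}$ and $G_{m,m}=H_{2,m}$, recovering Example \ref{cross} and Lemma \ref{lem2}.

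Next I would establish the three-term relation
\[
 G_{m,a}=\frac{a+1}{2(m+1)}\,(2x+1)\,H_{\mathrm{Cr}_m}+\frac{2m+1-a}{2(m+1)}\,H_{\mathrm{Cr}_{m-1}},
\]
verified at the level of Hilbert series by the same routine computation as relation (\ref{relH2n1}), using that $(2x+1)H(x)$ corresponds to $\bigl(2t\tfrac{d}{dt}+1\bigr)HS(t)$. For $1\le a\le m$ both coefficients are strictly positive, so after normalizing to monic polynomials this relation has precisely the shape $f_1=(x+\tfrac12)f_2+b\cdot f_3$ with $b>0$ demanded by Lemma \ref{lem:interl}, taking $f_2=H_{\mathrm{Cr}_m}$ and $f_3=H_{\mathrm{Cr}_{m-1}}$. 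All three polynomials satisfy the functional equation about $R$ by Proposition \ref{equiv}. Since the cross polytopes form an $R$-interlacing sequence by Example \ref{cross}, i.e.\ $H_{\mathrm{Cr}_m}$ is $R$-interlaced by $H_{\mathrm{Cr}_{m-1}}$, the implication (ii)$\Rightarrow$(i) of Lemma \ref{lem:interl} gives that $G_{m,a}$ has all its roots on $R$ (and is $R$-interlaced by $H_{\mathrm{Cr}_m}$), proving the theorem.

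The main obstacle is the first step: justifying the pendant-edge factorization and, with it, the collapse of an arbitrary non-complete bipartite $(2,n)$ graph to the single expression for $HS_G$. I expect this to come either from the unimodular free-sum description sketched above, or, if one prefers a purely combinatorial route, from the correct-graph count of Corollary \ref{cor:graphstobecounted} by adapting the recursion of Proposition \ref{prop:HS} to allow leaves attached to $v_1$ and $v_2$. By contrast, the recurrence and the application of Lemma \ref{lem:interl} are exactly the computations already carried out in Lemma \ref{lem2}, now performed with the parameter $a$ left free, and pose no new difficulty.
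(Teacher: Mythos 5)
Your proof is correct, but it takes a genuinely different route from the paper's. The first step coincides: the paper's own proof also reduces an arbitrary $(2,n)$ bipartite graph to the complete case by noting that adding degree-one vertices multiplies the Ehrhart series by $\frac{1+t}{1-t}$, arriving at $HS_G(t)=\frac{(1+t)^{m-1}(1+2at+t^2)}{(1-t)^{m+2}}$ in your notation (the paper asserts this factorization without proof, so your shear/free-sum justification and your separate handling of isolated vertices and of the forest case $a=0$ are, if anything, more careful). The divergence is in what comes next. The paper identifies $H_G$ with $G^{n+2}_{2m}$ and invokes Theorem \ref{thm:whenGroots}, whose proof is a deformation argument in the continuous parameter $c$: zeros can leave $R$ only by colliding, and a chain of auxiliary results (Lemma \ref{lem:recpol}, Lemma \ref{lem:interlace}, the two lemmas locating double zeros at $-\frac{1}{2}$, and the degree lemma) rules out collisions for $c$ in the stated range. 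You bypass all of this machinery with the single identity
\[
G_{m,a}=\frac{a+1}{2(m+1)}\,(2x+1)\,H_{\mathrm{Cr}_m}+\frac{2m+1-a}{2(m+1)}\,H_{\mathrm{Cr}_{m-1}},
\]
which does hold: on Hilbert series, $(2t\frac{d}{dt}+1)HS_{\mathrm{Cr}_m}(t)=\frac{(1+t)^{m-1}(1+(4m+2)t+t^2)}{(1-t)^{m+2}}$ and $HS_{\mathrm{Cr}_{m-1}}(t)=\frac{(1+t)^{m-1}(1-t)^2}{(1-t)^{m+2}}$, and the convex combination of the two numerators is exactly $(1+t)^{m-1}(1+2at+t^2)$. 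This generalizes relation (\ref{relH2n1}) (the case $a=m$), has positive coefficients precisely in the needed range $1\le a\le m$ (after normalizing, the $H_{\mathrm{Cr}_{m-1}}$-coefficient is $\frac{m(2m+1-a)}{4(a+1)}>0$), and hands the problem to Lemma \ref{lem:interl} together with Lemma \ref{lem1}, exactly the mechanism of Lemma \ref{lem2} with the parameter $a$ left free. Comparing the two: your argument is far more economical and yields the stronger conclusion that $H_G$ is $R$-interlaced by $H_{\mathrm{Cr}_m}$, in the spirit of Conjecture \ref{con:main}; moreover, since the identity makes sense for any real $a\in(-1,2m+1)$, it essentially recovers Theorem \ref{thm:whenGroots} on the open interval $-2<c<4d-6$ with $d=m+2$. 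What the paper's heavier approach retains is the closed interval (parameter values where roots are permitted to collide at $-\frac{1}{2}$) and, for even $d$, the larger range $c\le 4d+2$, which the positive-coefficient recurrence cannot reach.
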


For all natural numbers $j<d$ there exists a polynomial $H^d_j\in\RR[x]$ of degree $d-1$ such that if $\frac{(1+t)^j}{(1-t)^d}=\sum_{k=0}^\infty h_k t^k$, then $h_k=H^d_j(k)$ for all $k\geq 0$. It was shown in \cite{rod02} that one has \[H^d_j=(x+1)\cdots(x+d-1-j)\cdot \tilde{H}^d_j\] for some polynomial $\tilde{H}^d_j\in\RR[x]$ of degree $j$ all of whose roots $\alpha\in\CC$ satisfy $\Rel(\alpha)=-\frac{d-j}{2}$.

\begin{rem}
 We have: \begin{align}\label{eqn:hypgeo} H^d_j(x)=\sum_{i=0}^j \binom{j}{i} \binom{x+d-1-i}{d-1}=\binom{x+d-1}{d-1}\cdot {}_2F_1(-j,-x;1-d-x;-1)\end{align}
\end{rem}

\begin{rem}
 The polynomials $H^d_j$ satisfy $(-1)^{d-1}H^d_j(x)=H^d_j(-d+j-x)$.
\end{rem}

\begin{lema}\label{lem:recpol}
 The polynomials $F_j^d(x)=\tilde{H}^d_j(x-\frac{d-j}{2})$ satisfy the recursion
 \[
  F_{j+2}^d=(4x^2+2 d j+d-2 j^2-3 j-2)\cdot F_j^d+j(j-1)(4x^2-(d-j)^2)\cdot F_{j-2}^d
 \]
 for $j\geq2$. Furthermore, one has $F^d_0=\frac{1}{(d-1)!}$ and $F^d_1=\frac{2x}{(d-1)!}$.
\end{lema}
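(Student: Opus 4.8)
The plan is to reduce the stated recursion to two more elementary recursions for the unnormalised polynomials $\tilde H^d_j$ and then combine them. First I would dispose of the base cases. From the binomial expansion $H^d_j(x)=\sum_{i=0}^j\binom{j}{i}\binom{x+d-1-i}{d-1}$ together with the factorisation $H^d_j=(x+1)\cdots(x+d-1-j)\cdot\tilde H^d_j$ of \cite{rod02}, a short computation gives $\tilde H^d_0=\frac{1}{(d-1)!}$ and $\tilde H^d_1=\frac{2x+d-1}{(d-1)!}$, whence $F^d_0(x)=\tilde H^d_0(x-\tfrac d2)=\frac{1}{(d-1)!}$ and $F^d_1(x)=\tilde H^d_1(x-\tfrac{d-1}{2})=\frac{2x}{(d-1)!}$. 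I would also record that the symmetry $(-1)^{d-1}H^d_j(x)=H^d_j(-d+j-x)$ noted above, together with the reflection $(x+1)\cdots(x+d-1-j)\mapsto(-1)^{d-1-j}(x+1)\cdots(x+d-1-j)$ of the prefactor, yields $(-1)^j\tilde H^d_j(x)=\tilde H^d_j(-(d-j)-x)$; this is what makes each $F^d_j$ an even or odd polynomial and will be used below.

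Next I would establish two recursions for the $\tilde H^d_j$. Multiplying the Ehrhart generating function by $(1+t)$ gives $H^d_{j+1}(x)=H^d_j(x)+H^d_j(x-1)$; dividing by the linear prefactor and using $(x+1)\cdots(x+d-1-j)=(x+d-1-j)\cdot(x+1)\cdots(x+d-2-j)$ together with its backward shift produces the contiguous relation
\[\tilde H^d_{j+1}(x)=(x+d-1-j)\,\tilde H^d_j(x)+x\,\tilde H^d_j(x-1),\]
which I call (A). On the other hand, applying Gauss' contiguous relations in the top parameter to the representation \eqref{eqn:hypgeo}, $H^d_j(x)=\binom{x+d-1}{d-1}\,{}_2F_1(-j,-x;1-d-x;-1)$, and clearing the Pochhammer factor $(x+d-j)\cdots(x+d-1)$, produces a three-term recursion at a fixed argument
\[\tilde H^d_{j+1}(x)=(2x+d-3j-1)\,\tilde H^d_j(x)+2j(x+d-j)\,\tilde H^d_{j-1}(x),\]
which I call (R). Both are polynomial identities in $x$ with coefficients rational in $j,d$, so each can be confirmed by matching coefficients or directly in Mathematica \cite{Mathematica}, exactly as in Proposition \ref{prop:relations}.

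The heart of the argument is to combine (A) and (R). Subtracting (A) from (R) eliminates $\tilde H^d_{j+1}$ and gives the mixed relation
\[x\,\tilde H^d_j(x-1)=(x-2j)\,\tilde H^d_j(x)+2j(x+d-j)\,\tilde H^d_{j-1}(x),\]
which trades a unit backward shift of the argument for a unit drop of the degree; reflecting it through the symmetry above yields a companion relation for a forward shift. Writing $y=x-\tfrac{d-j}{2}$ and using $\tilde H^d_{\ell}=F^d_{\ell}(\,\cdot+\tfrac{d-\ell}{2})$, the asserted recursion for $F^d_j$ is equivalent to the diagonal identity $\tilde H^d_{j+2}(y+1)=(4x^2+2dj+d-2j^2-3j-2)\,\tilde H^d_j(y)+j(j-1)(4x^2-(d-j)^2)\,\tilde H^d_{j-2}(y-1)$ with $x=y+\tfrac{d-j}{2}$. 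To prove it I would iterate (A) once to write $\tilde H^d_{j+2}(y+1)$ through $\tilde H^d_j$ at $y-1,y,y+1$, then use the two mixed relations to replace the shifted values by $\tilde H^d_{j-1}$-terms at the base arguments, and finally apply (R) and the mixed relation at degree $j-1$ to reach $\tilde H^d_{j-2}(y-1)$.

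The step I expect to be the main obstacle is precisely this elimination. The naive substitution introduces denominators (factors such as $y$ and $y+1-2j$) and leaves residual $\tilde H^d_{j\pm1}$ terms, and the content of the lemma is that all of these cancel, leaving exactly the coefficients $4x^2+2dj+d-2j^2-3j-2$ and $j(j-1)(4x^2-(d-j)^2)$; in particular it is the \emph{non-constant} factor $4x^2-(d-j)^2$ multiplying $F^d_{j-2}$ that makes this more than a routine three-term recursion, and that must be produced by careful bookkeeping. Since after clearing denominators this is a single rational-function identity in $x,j,d$ of bounded complexity, I would verify the cancellation by a direct symbolic computation in the style of Proposition \ref{prop:relations}; the cases $j=0,1$, where the factor $j(j-1)$ removes the $F^d_{j-2}$ term, then anchor the statement for $F^d_2$ and $F^d_3$.
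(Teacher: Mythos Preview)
Your proposal is correct, and in particular the base cases, the relation (A), the three-term relation (R), and their difference (your ``mixed'' relation) are all right. However, you take a considerably longer route than the paper. The paper bypasses the auxiliary recursions entirely: it observes that unwinding $F^d_j(x)=\tilde H^d_j(x-\tfrac{d-j}{2})$ and reinstating the linear prefactors $(x+1)\cdots(x+d-1-j)$ converts the desired recursion for the $F^d_j$ into an equivalent identity for the \emph{unnormalised} polynomials,
\[
a_j\,H^d_{j+2}(x+1)=b_j\,H^d_j(x)+c_j\,H^d_{j-2}(x-1),
\]
with explicit polynomial coefficients $a_j,b_j,c_j$ in $x$. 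Since $H^d_\ell$ has the closed generating function $(1+t)^\ell/(1-t)^d$, this identity is then verified in one stroke by comparing Hilbert series, exactly in the style of Proposition~\ref{prop:relations}. So the paper's proof is a single translation step followed by a generating-function check---no contiguous relations, no elimination of shifted values.

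What your approach buys is a more ``explanatory'' derivation: (A), (R) and the mixed relation are natural in their own right, and in principle one could finish entirely by hand. The cost is precisely the elimination step you flag as the main obstacle: it does go through, but it introduces the denominators and residual $\tilde H^d_{j\pm1}$-terms you mention, and once you decide to clear those by a symbolic computation anyway, you have gained little over the direct route. If you want to streamline, pass from your diagonal identity for $\tilde H^d_j$ to the equivalent one for $H^d_j$ by multiplying through by the products of linear factors, and verify via Hilbert series; this renders (A), (R) and the mixed relation unnecessary for the proof of the lemma.
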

\begin{proof}
 The recursion above is equivalent to the following recursion on the $H^d_j$:
\begin{equation}\label{eq:rec}
  a_j H^d_{j+2}(x+1)=b_j H^d_j(x)+c_j H^d_{j-2}(x-1)
\end{equation}
where 
\begin{align*}
a_j&=x(x+1)(x+d-j-1)(x+d-j),\\
b_j&=x(x+d-j)(4(x+\frac{d-j}{2})^2+2 d j+d-2 j^2-3 j-2),\\
c_j&=(4(x+\frac{d-j}{2})^2-(d-j)^2).
\end{align*}
Applying the formula for the Hilbert series of $H^d_j$ one can show that both sides of Equation (\ref{eq:rec}) have the same Hilbert series.
\end{proof}


It follows from Lemma \ref{lem:recpol} that $F_j^d$ is an even or odd polynomial, depending on the parity of $j$. Thus, there are polynomials $A^d_k,B^d_k \in\RR[x]$ of degree $k$ such that $F^d_{2k}(x)=A^d_k(x^2)$ and $F^d_{2k+1}(x)=B^d_k(x^2)\cdot x$.

\begin{lema}\label{lem:interlace}$\,$
\begin{enumerate}[(i)]
             \item The polynomial $A^d_k$ has only simple, real and nonpositive roots for all $0\leq2k\leq d-1$. Moreover, for $0\leq2k\leq d-3$ the polynomials $A^d_k$ and $A^d_{k+1}$ are coprime and interlace.
             \item The polynomial $B^d_k$ has only simple, real and nonpositive roots for all $0\leq2k+1\leq d-1$. Moreover, for $0\leq2k+1\leq d-3$ the polynomials $B^d_k$ and $B^d_{k+1}$ are coprime and interlace.
            \end{enumerate}
\end{lema}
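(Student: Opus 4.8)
The plan is to turn the recursion of Lemma \ref{lem:recpol} into two genuine three-term recursions for the sequences $(A^d_k)_k$ and $(B^d_k)_k$ and then to run the classical sign-change argument for interlacing. First I would substitute $j=2k$ (resp. $j=2k+1$) into the recursion for $F^d_j$ and set $y=x^2$. Since $F^d_{2k}(x)=A^d_k(x^2)$ and $F^d_{2k+1}(x)=B^d_k(x^2)\cdot x$, the even part yields
\[
A^d_{k+1}(y)=(4y+\beta_k)\,A^d_k(y)+2k(2k-1)\bigl(4y-(d-2k)^2\bigr)A^d_{k-1}(y)
\]
with $\beta_k=4dk+d-8k^2-6k-2$, and dividing the odd part by $x$ gives the completely analogous
\[
B^d_{k+1}(y)=(4y+\beta'_k)\,B^d_k(y)+2k(2k+1)\bigl(4y-(d-2k-1)^2\bigr)B^d_{k-1}(y).
\]
From the initial values $F^d_0=\tfrac{1}{(d-1)!}$, $F^d_1=\tfrac{2x}{(d-1)!}$ one reads off $A^d_0,B^d_0>0$, and the recursions show that the leading coefficient is multiplied by $4$ at each step, so every $A^d_k$ and $B^d_k$ has positive leading coefficient. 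The realness and nonpositivity of the roots I would simply take from \cite{rod02}: since all roots of $\tilde H^d_j$ have real part $-\tfrac{d-j}{2}$, the shifted polynomials $F^d_j$ have purely imaginary roots, so every $A^d_k$ and $B^d_k$ has all roots real and $\le 0$.

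The heart of the proof is an induction on $k$ establishing simplicity, coprimality and interlacing simultaneously; I will describe it for the $A$'s, the $B$'s being identical. After checking the base cases $A^d_0=\tfrac{1}{(d-1)!}$ (no roots) and $A^d_1=\tfrac{1}{(d-1)!}(4y+d-2)$ (one simple negative root), assume $A^d_{k-1}$ and $A^d_k$ have simple real nonpositive roots, are coprime, and interlace, with roots $y_1>\dots>y_k$ of $A^d_k$. Evaluating the recursion at a root $y_i$ kills the first summand, leaving
\[
A^d_{k+1}(y_i)=2k(2k-1)\bigl(4y_i-(d-2k)^2\bigr)A^d_{k-1}(y_i).
\]
Here the decisive observation is that the factor $4y_i-(d-2k)^2$ is strictly negative: nonpositivity gives $y_i\le 0$, and the index bound $2k\le d-3$ forces $d-2k\ge 3$, so $(d-2k)^2>0$. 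Thus $\operatorname{sign}A^d_{k+1}(y_i)=-\operatorname{sign}A^d_{k-1}(y_i)$, and since interlacing of $A^d_{k-1},A^d_k$ makes $\operatorname{sign}A^d_{k-1}(y_i)=(-1)^{i-1}$, the values $A^d_{k+1}(y_i)$ alternate in sign.

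From sign alternation I would conclude interlacing in the usual way: one root of $A^d_{k+1}$ lies in each of the $k-1$ open gaps $(y_{i+1},y_i)$; the positive leading coefficient together with $A^d_{k+1}(y_1)<0$ produces a root in $(y_1,\infty)$, and comparing the sign $(-1)^k$ of $A^d_{k+1}(y_k)$ with the sign $(-1)^{k+1}$ of $A^d_{k+1}$ near $-\infty$ produces a root in $(-\infty,y_k)$. This accounts for all $k+1$ roots, forcing them to be real and simple, to lie strictly outside the roots of $A^d_k$ (hence coprimality), and to interlace those of $A^d_k$.

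I expect the main obstacle to be precisely that this recursion is \emph{not} a standard orthogonal-polynomial three-term recurrence: the coefficient $2k(2k-1)\bigl(4y-(d-2k)^2\bigr)$ of the lower term depends on $y$, so neither Favard's Theorem \ref{thm:Favard} nor Lemma \ref{lem:simple rec interlace} applies directly. The way around it is the remark above that on the region $y\le 0$, where all roots actually live, this coefficient has constant (negative) sign, which is exactly what the sign-change argument needs; keeping track of the index range $2k\le d-3$ to guarantee $(d-2k)^2>0$ is the only delicate bookkeeping.
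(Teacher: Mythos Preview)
Your proposal is correct and follows essentially the same route as the paper. Both arguments derive the three-term recursion $A^d_{k+1}=(4y+\beta_k)A^d_k+2k(2k-1)(4y-(d-2k)^2)A^d_{k-1}$ from Lemma~\ref{lem:recpol}, import the fact that the roots are real and nonpositive from \cite{rod02}, and then induct on $k$; the only difference is that where you carry out the sign-alternation and intermediate-value argument by hand (using that $4y_i-(d-2k)^2<0$ on the nonpositive axis), the paper simply cites \cite[Lem.~1.82]{fisk06}, which packages exactly that argument. Your explicit tracking of the index constraint $2k\le d-3$ to ensure $(d-2k)^2>0$, and of the positive leading coefficients, fills in details the paper leaves implicit.
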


\begin{proof}
 We will prove $(i)$. The proof of $(ii)$ is verbatim the same.
 Since the real part of all the roots of $\tilde{H}^d_{2k}$ is $-\frac{d-j}{2}$, the zeros of $F^d_{2k}(x)$ are located on the imaginary axis. This implies that the roots of $A^d_k$ must be real and nonpositive. We show the rest of the claim by induction on $k$. For $k=0$ the statement is obviously true. Assume that $A^d_{k-1}$ and $A^d_{k}$ are coprime and interlace. Then it is immediate from the identity \[
  A_{k+1}^d=(4x+4 d k+d-8 k^2-6 k-2)\cdot A_k^d+2k(2k-1)(4x-(d-2k)^2)\cdot A_{k-1}^d
 \]that also $A^d_{k}$ and $A^d_{k+1}$ are coprime. But the identity also implies that $A^d_{k}$ and $A^d_{k+1}$ interlace by \cite[Lem. 1.82]{fisk06}.
\end{proof}

Let $d$ be a positive integer and $c\in\RR$. In the following we consider the polynomial
 \[
  G^d_c=H_{d-3}^d(x)+c\cdot H_{d-3}^d(x-1)+H_{d-3}^d(x-2)=H_{d-1}^d(x)+(c-2)\cdot H_{d-3}^d(x-1).
 \]

\begin{lema}
 Let $d$ be a positive odd integer. Then $G^d_c(-\frac{1}{2})=0$ if and only if $c=4d-6$.
\end{lema}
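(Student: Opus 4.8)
The plan is to reduce the evaluation of $G^d_c$ at $-\tfrac12$ to two ``central'' values of the polynomials $H^d_{d-1}$ and $H^d_{d-3}$, and then to read those values off from the recursion of Lemma \ref{lem:recpol}. First I would use the second expression for $G^d_c$, namely $G^d_c = H^d_{d-1}(x)+(c-2)\cdot H^d_{d-3}(x-1)$, and substitute $x=-\tfrac12$. Since $H^d_{d-3}(-\tfrac12-1)=H^d_{d-3}(-\tfrac32)$, this gives $G^d_c(-\tfrac12)=H^d_{d-1}(-\tfrac12)+(c-2)\,H^d_{d-3}(-\tfrac32)$. The point is that both arguments are exactly the centers of symmetry: for $j=d-1$ one has $-\tfrac{d-j}{2}=-\tfrac12$, and for $j=d-3$ one has $-\tfrac{d-j}{2}=-\tfrac32$. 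Thus the whole problem becomes computing $H^d_j$ at its symmetry center for $j=d-1$ and $j=d-3$, and checking that the value for $j=d-3$ is nonzero, so that $G^d_c(-\tfrac12)$, being affine in $c$, has a unique root.

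Next I would pass to the normalized polynomials $F^d_j$. Writing $\phi_j:=F^d_j(0)$ and recalling $F^d_j(x)=\tilde H^d_j\bigl(x-\tfrac{d-j}{2}\bigr)$, we get $\phi_j=\tilde H^d_j(-\tfrac{d-j}{2})$, the central value of $\tilde H^d_j$. Using the factorization $H^d_j=(x+1)\cdots(x+d-1-j)\cdot\tilde H^d_j$ and evaluating the linear factors at the center, the product is empty for $j=d-1$ and equals $(-\tfrac12)(\tfrac12)=-\tfrac14$ for $j=d-3$. Hence $H^d_{d-1}(-\tfrac12)=\phi_{d-1}$ and $H^d_{d-3}(-\tfrac32)=-\tfrac14\,\phi_{d-3}$, and it remains to understand the $\phi_j$.

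The core computation is to solve the recursion of Lemma \ref{lem:recpol} at $x=0$. Setting $x=0$ annihilates the $4x^2$ terms and yields $\phi_{j+2}=(2dj+d-2j^2-3j-2)\,\phi_j-j(j-1)(d-j)^2\,\phi_{j-2}$, with $\phi_0=\tfrac{1}{(d-1)!}$ and $\phi_1=0$; in particular all odd-indexed $\phi_j$ vanish and the even ones form a closed subsystem. I would then prove by induction on $k$ the closed form $\phi_{2k}=(2k-1)!!\cdot\bigl(\prod_{l=1}^{k}(d-2l)\bigr)\cdot\tfrac{1}{(d-1)!}$; the inductive step reduces to checking that the bracket $(2dj+d-2j^2-3j-2)-2k(d-2k)$ with $j=2k$ factors as $(2k+1)(d-2k-2)$, which is the one genuinely computational point. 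For odd $d\ge 3$, writing $d-1=2K$, this gives $\phi_{d-1}/\phi_{d-3}=(2K-1)(d-2K)=(d-2)\cdot 1=d-2$; moreover $\phi_{d-3}$ is, up to the positive factor $\tfrac{1}{(d-1)!}$, a product of positive integers, hence strictly positive and in particular nonzero.

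Putting it together, $G^d_c(-\tfrac12)=\phi_{d-1}-\tfrac{c-2}{4}\,\phi_{d-3}$, and since $\phi_{d-3}\neq 0$ this vanishes precisely when $c-2=4\,\phi_{d-1}/\phi_{d-3}=4(d-2)$, i.e.\ $c=4d-6$, which settles both implications simultaneously. I expect the main obstacle to be this third step: guessing and then verifying (through the factorization of the bracket) the closed form for $\phi_{2k}$, while carefully tracking the normalizations that relate $\phi_j$, $\tilde H^d_j$ and $H^d_j$ at the two different centers. Everything else is a direct substitution, and the oddness of $d$ enters exactly in guaranteeing that $d-1$ and $d-3$ are even (so both values are genuine central values of the even subsystem) and that the denominator $\phi_{d-3}$ does not vanish.
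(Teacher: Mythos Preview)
Your argument is correct and complete. The key identity $\phi_{2k}/\phi_{2k-2}=(2k-1)(d-2k)$ (equivalently, your closed form for $\phi_{2k}$) checks out, and once you know it the conclusion $c=4d-6$ falls out exactly as you say.

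The paper, however, argues differently. It uses the \emph{first} expression
\[
G^d_c(x)=H^d_{d-3}(x)+c\,H^d_{d-3}(x-1)+H^d_{d-3}(x-2)
\]
and evaluates the three terms $H^d_{d-3}(-\tfrac12)$, $H^d_{d-3}(-\tfrac32)$, $H^d_{d-3}(-\tfrac52)$ directly from the explicit binomial/hypergeometric formula~(\ref{eqn:hypgeo}), obtaining a closed form
\[
G^d_c(-\tfrac12)=\frac{\Gamma(\tfrac{d}{2}-1)}{8\sqrt{\pi}\,(\tfrac{d-1}{2})!}\bigl((4d-6)-c\bigr).
\]
So where the paper appeals to a Gamma-function evaluation of the hypergeometric sum, you stay entirely inside the recursion of Lemma~\ref{lem:recpol} and never touch~(\ref{eqn:hypgeo}). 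Your route is more self-contained (it uses only ingredients already proved in the paper) and, as a bonus, yields the explicit product formula for $F^d_{2k}(0)$; the paper's route is shorter once one is willing to quote the value of ${}_2F_1$ at half-integer arguments. Either way the answer is the same, and your proof also anticipates the structure of the next lemma (on even $d$), which likewise rests on the recursion for the $F^d_j$.
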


\begin{proof}
 Using (\ref{eqn:hypgeo}) one checks that \[G^d_c(-\frac{1}{2})=H_{d-3}^d(-\frac{1}{2})+c\cdot H_{d-3}^d(-\frac{3}{2})+H_{d-3}^d(-\frac{5}{2})=\frac{\Gamma(\frac{d}{2}-1)}{8 \sqrt{\pi} (\frac{d-1}{2})!}\cdot((4d-6)-c).\qedhere\]
\end{proof}

\begin{lema}
Let $d$ be a positive even integer and let $a_n$ be the coefficient of the linear term in $F^d_n$. For
$1\leq n\leq d-3$ odd we have:
$$\frac{a_{n+2}}{a_{n}}>-(n+1)^2+(d-2)(n+1)+d.$$
In particular, $a_{d-1}/a_{d-3}>d$.
\end{lema}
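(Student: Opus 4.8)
The plan is to read a three-term recursion for the numbers $a_n$ straight off the recursion for the polynomials $F^d_n$ in Lemma~\ref{lem:recpol}, and then to run an induction on odd $n$ that bounds the ratio $a_{n+2}/a_n$ from below. First I would extract the recursion for $a_n$. For odd $n$ the polynomial $F^d_n$ is odd, so its lowest-degree term is the linear one, with coefficient $a_n$; comparing the coefficient of $x$ on both sides of
\[
 F^d_{n+2}=(4x^2+2dn+d-2n^2-3n-2)\,F^d_n+n(n-1)(4x^2-(d-n)^2)\,F^d_{n-2},
\]
the $4x^2$-parts contribute only in degrees $\ge 3$, so only the constant parts survive and I obtain
\[
 a_{n+2}=(2dn+d-2n^2-3n-2)\,a_n-n(n-1)(d-n)^2\,a_{n-2}.
\]
Writing $c_n:=2dn+d-2n^2-3n-2$, this is the whole engine of the proof. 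From $F^d_1=2x/(d-1)!$ I read off $a_1=2/(d-1)!>0$, and the $n=1$ instance (where the factor $n(n-1)$ kills the $a_{-1}$-term) gives $a_3=(3d-7)a_1$.

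Then I would prove by induction on odd $n\in\{1,3,\dots,d-3\}$ the statement that $a_n>0$ and $a_{n+2}/a_n>\ell_n$, where $\ell_n:=-(n+1)^2+(d-2)(n+1)+d$ is exactly the claimed bound. The base case $n=1$ is immediate: $a_3/a_1=3d-7>3d-8=\ell_1$, and $a_1,a_3>0$. For the inductive step I assume $a_{n-2}>0$ and $a_n/a_{n-2}>\ell_{n-2}$; then $a_n>0$, and dividing the recursion by $a_n$ gives
\[
 \frac{a_{n+2}}{a_n}=c_n-n(n-1)(d-n)^2\,\frac{a_{n-2}}{a_n}>c_n-\frac{n(n-1)(d-n)^2}{\ell_{n-2}},
\]
where I used $a_{n-2}/a_n=1/(a_n/a_{n-2})<1/\ell_{n-2}$ together with $\ell_{n-2}=n(d-n)+1\ge 1$ on the relevant range.

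Thus the entire step reduces to the polynomial inequality $(c_n-\ell_n)\,\ell_{n-2}-n(n-1)(d-n)^2>0$. I would compute this expression explicitly; the key point is that after expansion the degree-$4$ and degree-$3$ terms in $n$ cancel, leaving
\[
 (c_n-\ell_n)\,\ell_{n-2}-n(n-1)(d-n)^2=-2n^2+(2d+1)n-(d-1)=:g(n).
\]
Since $g$ is a downward parabola in $n$, its minimum over $[1,d-3]$ is attained at an endpoint, and $g(1)=d>0$ while $g(d-3)=6d-20>0$ for $d\ge 6$ (the only situation in which the inductive step actually occurs, since $n\ge 3$ forces $d\ge 6$); hence $g(n)>0$ throughout, which via the displayed estimate yields $a_{n+2}/a_n>\ell_n+g(n)/\ell_{n-2}>\ell_n$ and closes the induction. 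The ``in particular'' claim is just the terminal case $n=d-3$: there $\ell_{d-3}=-(d-2)^2+(d-2)(d-2)+d=d$, so $a_{d-1}/a_{d-3}>d$.

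The main obstacle I anticipate is the exact cancellation that produces $g(n)$: one must expand $(c_n-\ell_n)\ell_{n-2}$ and $n(n-1)(d-n)^2$ carefully and verify that the quartic and cubic parts in $n$ coincide, so that only the benign quadratic remainder $g$ survives. Everything else — positivity of $\ell_{n-2}$, the concavity argument on $[1,d-3]$, and the base case — is routine once this identity is in hand. A minor secondary point is justifying the value $a_3=(3d-7)a_1$ used in the base case, which follows either by extending the recursion of Lemma~\ref{lem:recpol} to $j=1$ under the convention $F^d_{-1}=0$, or directly from the explicit form $F^d_3=(4x^2+3d-7)\,F^d_1$.
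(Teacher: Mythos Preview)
Your proof is correct and follows essentially the same approach as the paper: both extract the scalar recursion $a_{n+2}=c_n a_n-n(n-1)(d-n)^2 a_{n-2}$ from Lemma~\ref{lem:recpol} and run the same induction on odd $n$ with the same inductive hypothesis $a_{n+2}/a_n>\ell_n$. The only difference is cosmetic: in the inductive step the paper uses the trivial bound $\tfrac{n(d-n)}{n(d-n)+1}<1$ to get $f_n\geq c_n-(n-1)(d-n)=\ell_n+1$ in one line, whereas you compute the exact remainder $g(n)=(c_n-\ell_n)\ell_{n-2}-n(n-1)(d-n)^2=-2n^2+(2d+1)n-(d-1)$ and then bound it via concavity. (Incidentally, your base value $a_3/a_1=3d-7$ is correct; the paper's ``$3d-6$'' is a harmless slip.)
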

\begin{proof}
The proof is by induction on $n$, with $n=1$ following from $a_3/a_1=3d-6>3d-8$.
Let $f_n:=a_{n+2}/a_{n}$.
By Lemma \ref{lem:recpol} we have:
$$f_{n}=-2+d-3n+2dn-2n^2-n(n-1)(d-n)^2\cdot f_{n-2}^{-1}.$$
By induction we know that:
$$-f_{n-2}^{-1}\geq ((n-1)^2-(d-2)(n-1)-d)^{-1}=-(n(d-n)+1)^{-1}.$$
Hence,
\begin{align*}
f_{n}&\geq -2+d-3n+2dn-2n^2-(n-1)(d-n)\frac{n(d-n)}{n(d-n)+1}\\
&\geq -2+d-3n+2dn-2n^2-(n-1)(d-n)\\
&=-2-4n+2d+dn-n^2
=(-(n+1)^2+(d-2)(n+1)+d)+1 .\qedhere
\end{align*}
\end{proof}

\begin{lema}
 Let $d$ be a positive even integer. If $G^d_c$ has a double zero at $-\frac{1}{2}$, then $c>4d+2$.
\end{lema}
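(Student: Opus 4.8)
The plan is to exploit the symmetry of $G^d_c$ to collapse the double-zero condition into a single scalar equation, and then to recognize that equation as pinning down $c$ in terms of the ratio $a_{d-1}/a_{d-3}$ already controlled by the preceding lemma. First I would record the relevant symmetry. Since $d$ is even we have $(-1)^{d-1}=-1$, so the remark $(-1)^{d-1}H^d_j(x)=H^d_j(-d+j-x)$ specializes, for $j=d-3$, to $H^d_{d-3}(-3-x)=-H^d_{d-3}(x)$. Feeding this into $G^d_c(x)=H^d_{d-3}(x)+c\,H^d_{d-3}(x-1)+H^d_{d-3}(x-2)$ and computing $G^d_c(-1-x)$ gives $G^d_c(-1-x)=-G^d_c(x)$; that is, $G^d_c$ is skew-symmetric about $-\tfrac12$, so $-\tfrac12$ is automatically a zero. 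Writing $\phi(u)=G^d_c(u-\tfrac12)$, the polynomial $\phi$ is odd and $\phi(0)=0$ holds for free, whence $G^d_c$ has a double zero at $-\tfrac12$ if and only if $\phi'(0)=0$, i.e.\ $(G^d_c)'(-\tfrac12)=0$. This is the condition I would analyse.

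Next I would evaluate $(G^d_c)'(-\tfrac12)$ via the factored form $G^d_c=H^d_{d-1}(x)+(c-2)\,H^d_{d-3}(x-1)$. For the first summand, there is no linear prefactor (as $d-1-(d-1)=0$), so $H^d_{d-1}(x)=F^d_{d-1}(x+\tfrac12)$; since $d-1$ is odd, $F^d_{d-1}(x)=B^d_{d/2-1}(x^2)\cdot x$, and therefore $(H^d_{d-1})'(-\tfrac12)=(F^d_{d-1})'(0)=B^d_{d/2-1}(0)=a_{d-1}$, the linear coefficient of $F^d_{d-1}$. For the second summand, $d-1-(d-3)=2$ gives $H^d_{d-3}(x)=(x+1)(x+2)\,F^d_{d-3}(x+\tfrac32)$; differentiating and evaluating at $x=-\tfrac32$, the prefactor derivative $(2x+3)$ and the value $F^d_{d-3}(0)$ both vanish, leaving $(H^d_{d-3})'(-\tfrac32)=-\tfrac14\,(F^d_{d-3})'(0)=-\tfrac14\,a_{d-3}$. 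Hence $(G^d_c)'(-\tfrac12)=a_{d-1}-\tfrac14(c-2)\,a_{d-3}$, and the double-zero equation $(G^d_c)'(-\tfrac12)=0$ is equivalent to $c=2+4\,a_{d-1}/a_{d-3}$.

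Finally I would invoke the preceding lemma, which supplies $a_{d-1}/a_{d-3}>d$ (and in particular ensures the ratio is well defined). Substituting, $c=2+4\,a_{d-1}/a_{d-3}>2+4d=4d+2$, which is exactly the claim. Note that only the forward implication is needed, so I would simply assume the double zero, deduce $(G^d_c)'(-\tfrac12)=0$, solve the resulting linear-in-$c$ equation, and compare with the lemma.

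The conceptual crux, which makes the whole argument short, is the observation that for even $d$ the value $-\tfrac12$ is \emph{forced} to be a zero by skew-symmetry, so the genuine content of ``double zero'' is the single equation $(G^d_c)'(-\tfrac12)=0$, and that this equation is linear in $c$ with coefficients already computed. The main obstacle is therefore not conceptual but bookkeeping: correctly tracking the even/odd parities of the $F^d_j$ and the three different shifts ($x,\,x-1,\,x-2$) against the centres $-\tfrac12$ and $-\tfrac32$, and keeping straight that $a_{d-1}$ and $a_{d-3}$ are the linear coefficients of $F^d_{d-1}$ and $F^d_{d-3}$, equivalently the constant terms of $B^d_{d/2-1}$ and $B^d_{d/2-2}$. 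Once these identifications are made carefully, the derivative computation and the appeal to the previous lemma close the proof.
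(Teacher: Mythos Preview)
Your proof is correct and follows essentially the same approach as the paper. The paper writes $G^d_c(x)=F^d_{d-1}(x+\tfrac12)+(c-2)\,x(x+1)\,F^d_{d-3}(x+\tfrac12)$, shifts $x\mapsto x-\tfrac12$, and reads off that the double zero at $-\tfrac12$ forces the linear coefficient $a_{d-1}-\tfrac14(c-2)\,a_{d-3}$ to vanish, whence $\tfrac{c-2}{4}=a_{d-1}/a_{d-3}>d$; your skew-symmetry-plus-derivative computation arrives at exactly the same scalar equation by an equivalent route.
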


\begin{proof}
 First note that
 $$G_c^d=H_{d-1}^d(x)+(c-2)\cdot H_{d-3}^d(x-1)=F_{d-1}^d(x+\frac{1}{2})+(c-2)\cdot x (x+1)\cdot F_{d-3}^d(x+\frac{1}{2}).$$
 This has a double zero at $-\frac{1}{2}$ if and only if the linear term of
 $$F_{d-1}^d(x)+(c-2)\cdot (x-\frac{1}{2}) (x+\frac{1}{2})\cdot F_{d-3}^d(x)$$ vanishes. In the notation of the preceding lemma this implies that \[\frac{c-2}{4}=\frac{a_{d-1}}{a_{d-3}}>d.\qedhere\]
\end{proof}

\begin{lema}
 The polynomial $G^d_c$ has degree $d-1$ if and only if $c\neq -2$.
\end{lema}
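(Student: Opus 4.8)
The plan is to read off the degree of $G^d_c$ directly from the coefficient of $x^{d-1}$, treating $G^d_c$ as a polynomial in $x$. Using the second expression $G^d_c=H_{d-1}^d(x)+(c-2)\cdot H_{d-3}^d(x-1)$, we see that $G^d_c$ is a linear combination of two polynomials, each of degree $d-1$ by the result of Rodriguez-Villegas recalled before the statement (here $d-1<d$ and $d-3<d$, so both $H^d_{d-1}$ and $H^d_{d-3}$ are defined and of degree $d-1$). Hence $\deg G^d_c\leq d-1$, with equality exactly when the coefficient of $x^{d-1}$ is nonzero. The entire argument therefore reduces to computing this one coefficient as a function of $c$.

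First I would determine the leading coefficient of $H^d_j$ for general $j<d$. From the explicit formula $H^d_j(x)=\sum_{i=0}^j \binom{j}{i}\binom{x+d-1-i}{d-1}$, each summand $\binom{x+d-1-i}{d-1}$ is a polynomial in $x$ of degree $d-1$ whose leading coefficient is $\frac{1}{(d-1)!}$, independent of the shift $i$. Summing and using $\sum_{i=0}^j \binom{j}{i}=2^j$, the coefficient of $x^{d-1}$ in $H^d_j$ equals $\frac{2^j}{(d-1)!}$. The same value follows from the Hilbert-series description: if $\frac{(1+t)^j}{(1-t)^d}=\sum_k h_k t^k$, then $h_k=H^d_j(k)$ is eventually polynomial of degree $d-1$ with leading coefficient $\frac{(1+t)^j\big|_{t=1}}{(d-1)!}=\frac{2^j}{(d-1)!}$.

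Next I would substitute into $G^d_c$, noting that a shift of the argument leaves the leading coefficient unchanged, so the coefficient of $x^{d-1}$ in $H_{d-3}^d(x-1)$ is again $\frac{2^{d-3}}{(d-1)!}$, while that of $H_{d-1}^d(x)$ is $\frac{2^{d-1}}{(d-1)!}$. Therefore the coefficient of $x^{d-1}$ in $G^d_c$ is
\[
\frac{2^{d-1}}{(d-1)!}+(c-2)\cdot\frac{2^{d-3}}{(d-1)!}=\frac{2^{d-3}}{(d-1)!}\bigl(4+(c-2)\bigr)=\frac{2^{d-3}}{(d-1)!}\,(c+2).
\]
Since $\frac{2^{d-3}}{(d-1)!}\neq 0$, this coefficient vanishes precisely when $c=-2$. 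Thus $G^d_c$ has degree $d-1$ when $c\neq-2$ and degree strictly less than $d-1$ when $c=-2$, which is exactly the claimed equivalence.

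There is no real obstacle here; the only point requiring care is the bookkeeping of the two distinct values $2^{d-1}$ and $2^{d-3}$ arising from the terms $H^d_{d-1}$ and $H^d_{d-3}$, and verifying that the clean factor $(c+2)$—rather than some other affine function of $c$—emerges. As a consistency check one may instead use the symmetric form $G^d_c=H_{d-3}^d(x)+c\cdot H_{d-3}^d(x-1)+H_{d-3}^d(x-2)$, whose coefficient of $x^{d-1}$ is $\frac{2^{d-3}}{(d-1)!}(1+c+1)=\frac{2^{d-3}}{(d-1)!}(c+2)$, giving the same answer.
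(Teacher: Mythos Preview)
Your proof is correct and follows essentially the same approach as the paper: compute the leading coefficient of $H^d_j$ to be $\frac{2^j}{(d-1)!}$, plug into $G^d_c=H^d_{d-1}(x)+(c-2)H^d_{d-3}(x-1)$, and observe that the coefficient of $x^{d-1}$ vanishes exactly when $2^{d-1}+(c-2)2^{d-3}=0$, i.e.\ $c=-2$. You supply more detail in deriving the leading coefficient and add a consistency check via the symmetric form, but the argument is the same.
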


\begin{proof}
 The leading coefficient of $H_{j}^d$ is $\frac{2^{j}}{(d-1)!}$. Thus, the degree drops if and only if \[2^{d-1}+(c-2)\cdot 2^{d-3}=0.\qedhere\]
\end{proof}

\begin{thm}\label{thm:whenGroots} Let $d\geq 3$.
 For every $-2\leq c\leq4d-6$ if $d$ is odd and for every $-2\leq c\leq4d+2$ if $d$ is even, the polynomial $G_c^d$ has only roots with real part equal to $-\frac{1}{2}$.
\end{thm}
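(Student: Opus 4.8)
The plan is to reduce the statement to a question about real, nonpositive roots of a one–parameter family of real polynomials, and then to control that family by a continuity argument anchored at $c=2$. The starting point is the identity already recorded in the proof of the double–zero lemma, which after the substitution $x\mapsto x-\tfrac12$ reads
\[
 G^d_c\!\left(x-\tfrac12\right)=F^d_{d-1}(x)+(c-2)\left(x^2-\tfrac14\right)F^d_{d-3}(x).
\]
Thus all roots of $G^d_c$ have real part $-\tfrac12$ if and only if all roots of the right–hand side, as a polynomial in $x$, are purely imaginary. Since $F^d_j$ is even or odd according to the parity of $j$, I would split into two cases. For $d$ odd, writing $d=2m+1$ and substituting $z=x^2$ turns the right–hand side into $p_c(z):=A^d_m(z)+(c-2)(z-\tfrac14)A^d_{m-1}(z)$, and purely imaginary roots in $x$ correspond to real roots $z\le 0$. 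For $d$ even, $d=2m$, the right–hand side factors as $x\cdot\big(B^d_{m-1}(z)+(c-2)(z-\tfrac14)B^d_{m-2}(z)\big)$ with $z=x^2$; the factor $x$ contributes the admissible root $x=0$ (i.e.\ $-\tfrac12$ on $R$), and everything else reduces to showing that $\tilde p_c(z):=B^d_{m-1}(z)+(c-2)(z-\tfrac14)B^d_{m-2}(z)$ has only real nonpositive roots.

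Next I would show that for \emph{every} real $c$ the polynomial $p_c$ (resp.\ $\tilde p_c$) is real–rooted. By Lemma \ref{lem:interlace} the polynomials $A^d_{m-1}$ and $A^d_m$ have simple, real, nonpositive roots and interlace; since the factor $z-\tfrac14$ contributes a single root at $\tfrac14$, lying strictly to the right of every (nonpositive) root of $A^d_m$, the two degree–$m$ polynomials $A^d_m$ and $(z-\tfrac14)A^d_{m-1}$ have strictly alternating zeros, i.e.\ they form an interlacing pair. By the standard fact that every real linear combination of an interlacing pair is real–rooted (Obreschkoff's theorem; see \cite{fisk06}), $p_c$ is real–rooted for all $c\in\RR$. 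Using Lemma \ref{lem:interlace}(ii), the same argument applies verbatim to $\tilde p_c$ and the pair $B^d_{m-1}$, $(z-\tfrac14)B^d_{m-2}$.

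With real–rootedness for all $c$ in hand, it remains to rule out positive roots as $c$ ranges over the stated interval. The number of roots in $(0,\infty)$ is a locally constant function of $c$ so long as the degree does not drop and no root sits at $z=0$: the roots are all real and vary continuously, so a positive root can appear only by crossing $z=0$ or by entering from $+\infty$. By the degree lemma the degree of $G^d_c$ (hence of $p_c$, $\tilde p_c$) drops only at $c=-2$, so no root escapes to or enters from infinity for $c>-2$. A root at $z=0$ means $p_c(0)=G^d_c(-\tfrac12)=0$, which by the lemma computing $G^d_c(-\tfrac12)$ happens, for $d$ odd, exactly at $c=4d-6$; for $d$ even a root of $\tilde p_c$ at $z=0$ forces a double zero of $G^d_c$ at $-\tfrac12$, which by the double–zero lemma requires $c>4d+2$. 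Hence on the open interval $(-2,4d-6)$ for $d$ odd (resp.\ $(-2,4d+2)$ for $d$ even) the count of positive roots is constant, and evaluating at $c=2$, where $p_2=A^d_m$ (resp.\ $\tilde p_2=B^d_{m-1}$) has no positive roots by Lemma \ref{lem:interlace}, shows this count is $0$ throughout; the closed endpoints follow by continuity, since a root reaching $0$ or $-\infty$ keeps (or removes) the corresponding root of $G^d_c$ on $R$. Note $2$ lies in the interval because $4d-6\ge 6$ for $d\ge3$.

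The main obstacle I anticipate is the interlacing step: one must verify not merely that $A^d_m$ and $A^d_{m-1}$ interlace (Lemma \ref{lem:interlace}) but that inserting the factor $z-\tfrac14$ \emph{preserves strict} alternation of zeros — that $\tfrac14$ is genuinely to the right of the largest nonpositive root of $A^d_m$ and coincides with none of them — so that $A^d_m$ and $(z-\tfrac14)A^d_{m-1}$ form a genuine Obreschkoff pair rather than a merely weakly interlacing one, and likewise for the $B$–polynomials. A secondary point requiring care is the bookkeeping of boundary values: matching the degree drop at $c=-2$, the zero–crossing at $c=4d-6$ for $d$ odd, and the double–zero threshold $4d+2$ for $d$ even precisely to the endpoints of the two claimed intervals, together with treating separately the small cases $d=3,4$ in which $A^d_{m-1}$ or $B^d_{m-2}$ degenerates to a constant.
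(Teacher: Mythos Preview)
Your argument is correct and uses the same ingredients as the paper's proof---Lemma~\ref{lem:interlace}, the lemma computing $G^d_c(-\tfrac12)$ for $d$ odd, the double-zero lemma for $d$ even, and the degree lemma---together with a continuity argument in $c$ anchored at $c=2$. The difference is in packaging. The paper works directly on the line $R$: it takes the maximal interval $[a,b]\ni 2$ on which all roots of $G^d_c$ stay on $R$, observes that at the endpoints either the degree drops or a multiple root appears, and then uses the interlacing of the imaginary parts of the zeros of $H^d_{d-1}$ and $\tilde H^d_{d-3}(x-1)$ (which is exactly Lemma~\ref{lem:interlace} transported back to $R$) to pin any multiple root to $-\tfrac12$. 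You instead pass to the real variable $z=x^2$, obtain two same-degree polynomials with strictly alternating real roots, and invoke the Hermite--Kakeya--Obreschkoff theorem once to get real-rootedness of $p_c$ (resp.\ $\tilde p_c$) for \emph{every} real $c$; the question then becomes purely one of sign (no positive roots), which your $z=0$/degree-drop bookkeeping handles. Your route is a bit more streamlined: Obreschkoff replaces the paper's hands-on tracking of zeros $\gamma_i$ between consecutive $\beta_i$'s, and the reduction to nonpositivity of real roots makes the continuity argument transparent.

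Your ``main obstacle'' is in fact not one: since all roots of $A^d_m$ and $A^d_{m-1}$ (resp.\ of $B^d_{m-1}$ and $B^d_{m-2}$) are nonpositive by Lemma~\ref{lem:interlace} and $A^d_m(0)=G^d_2(-\tfrac12)\neq 0$ for $d\ge3$ odd (resp.\ $B^d_{m-1}(0)\neq 0$ for $d$ even, by the double-zero lemma at $c=2$), the extra root $\tfrac14>0$ automatically lies strictly to the right of every root of $A^d_m$ (resp.\ $B^d_{m-1}$) and coincides with none of them, so the Obreschkoff pair is genuine. The small cases $d=3,4$ are harmless: $A^d_0$ and $B^d_0$ are nonzero constants, so $(z-\tfrac14)A^d_0$ and $(z-\tfrac14)B^d_0$ are linear with a single positive root, which trivially interlaces the single nonpositive root of $A^d_1$ resp.\ $B^d_1$.
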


\begin{proof}
 The claim is true for $G_2^d=H^d_{d-1}$ by \cite{rod02}. Since $(-1)^{d-1}G^d_c(x)=G^d_c(1-x)$ the zeros of $G^d_c$ are located symmetrically with respect to $R=R_{\frac{1}{2}}$. Since by Lemma \ref{lem:interlace} the zeros of $H^d_{d-1}$ are simple there are real numbers $a<b$ such that $2\in [a,b]$, $G_c^d$ has all its zeros on $R$ whenever $c\in[a,b]$ and $G_{a}^d$ and $G_{b}^d$ have either a multiple zero or degree less than $d-2$. We will show that such a multiple zero must be at $-\frac{1}{2}$. This will imply the claim by the preceding lemmas.
 
 If $d$ is even, then both $H^d_{d-1}(x)$ and $H^d_{d-3}(x-1)$ have a zero at $-\frac{1}{2}$. In that case let \[f(x)=\frac{H^d_{d-1}(x)}{2x+1}\textrm{ and } g(x)=\frac{H^d_{d-3}(x-1)}{2x+1}.\] If $d$ is odd, let $f(x)=H^d_{d-1}(x)$ and $g(x)=H^d_{d-3}(x-1)$. Let $\alpha_1,\ldots,\alpha_{2k}\in R$ with $\Imi(\alpha_{i})<\Imi(\alpha_{i+1})$ for all $1\leq i\leq 2k-1$ be the zeros of $f$. Two of the zeros of $g$ are $0$ and $-1$. Let $\beta_1,\ldots,\beta_{2k-2}\in R$ with $\Imi(\beta_{i})<\Imi(\beta_{i+1})$ for all $1\leq i\leq 2k-3$ be the remaining zeros of $g$. Since $f$ and $g$ are real polynomials we have that $\Imi(\alpha_{2k-i+1})=-\Imi(\alpha_i)$ for all $1\leq i\leq k$ and $\Imi(\beta_{2k-i-1})=-\Imi(\beta_i)$ for all $1\leq i\leq k-1$. By Lemma \ref{lem:interlace} we have furthermore \[\Imi(\alpha_1)<\Imi(\beta_1)<\Imi(\alpha_2)<\cdots<\Imi(\beta_{k-1})<\Imi(\alpha_k)<0,\]\[0<\Imi(\alpha_{k+1})<\Imi(\beta_{k})<\Imi(\alpha_{k+2})<\cdots<\Imi(\beta_{2k-2})<\Imi(\alpha_{2k}).\] Since $f$ and $g$ are coprime, $f+(c-2)g$ and $g$ are coprime for all $c\in\RR$. Thus, for all $c\in\RR$ and $1\leq i\leq k-2$ there are zeros $\gamma_i, \gamma_{k+i-1} \in L$ of $f+(c-2)g$ with $\Imi(\beta_i)<\Imi(\gamma_i)<\Imi(\beta_{i+1})$ and $\Imi(\beta_{k+i-1})<\Imi(\gamma_{k+i-1})<\Imi(\beta_{k+i})$. Furthermore, for $c>-2$ there is one zero of $f+(c-2)g$ with imaginary part larger than $\Imi(\beta_{2k-2})$ and with imaginary part smaller than $\Imi(\beta_1)$. Thus, the only possibility for $G_{a}^d$ or $G_{b}^d$ to have a multilple root is at $-\frac{1}{2}$.
\end{proof}


\begin{proof}[Proof of Theorem \ref{thm:any(2,n)}]
First note that $H_{2,n}=G^{n+2}_{2n}$ and $HS_{2,n}(t)=\frac{(1+t)^{n-1}(1+2nt+t^2)}{(1-t)^{n+2}}$ -cf.~Proposition \ref{prop:HS}. Any bipartite graph $G$ of type $(2,n)$ can be obtained from a complete $(2,m)$-bipartite graph for $m\leq n$ by adding vertices of degree one. Such an extension of graphs corresponds to multiplying the Hilbert series by $\frac{1+t}{1-t}$. Thus, $HS_G(t)=\frac{(1+t)^{n-1}(1+2mt+t^2)}{(1-t)^{n+2}}$ and the conclusion follows by Theorem \ref{thm:whenGroots}.
\end{proof}

\begin{exm}
 In general not every Ehrhart polynomial coming from a bipartite graph has its roots on $R$, e.g. let $G$ be the eight-cycle. The corresponding Ehrhart polynomial is
 \[
  H_G(x)=1+\frac{7}{2}x+\frac{175}{36}x^2+\frac{161}{36}x^3+\frac{35}{18}x^4+\frac{35}{36}x^5+\frac{7}{36}x^6+\frac{1}{18}x^7.
 \]
One checks that there is a root of $H_G$ having real part smaller than $-1$.
\end{exm}

\section{Dual polytopes - Examples}
In this section we present various results showing what happens for dual polytopes. First let us notice that it may happen that a polytope $P$ is reflexive, $H_P$ has roots on $R$ and $H_{P^*}$ does not have this property.
\begin{exm}
Consider $P_d$ to be the convex hull of $e_1,\dots,e_d,-e_1-\dots-e_d$. We have: $$HS_{P_d}(t)=\frac{\sum_{i=1}^d t^i}{(1-t)^{d+1}}.$$
In particular, all roots of the numerator belong to the unit circle and hence by \cite{rod02} all roots of $H_{P_d}$ belong to $R$. On the other hand:
$$H_{P_d^*}(x)={{(d+1)x+d}\choose d},$$
as $P_d^*$, up to a lattice shift, is the $(d+1)$st dilation of the standard $d$-dimensional simplex. 
\end{exm}
\begin{exm}
The dual $Cr_d^*$ of the Cross Polytope in Example \ref{cross} is simply the cube $[-1,1]^d$. In particular, $H_{Cr_d^*}(x)=(2x+1)^d$ with all roots equal to $-\frac{1}{2}$.
\end{exm}
We finish with an example dual to \ref{typeA}. In this case, the Ehrhart polynomials do not form an OPS. Yet, as we will see the interlacing property holds.
\begin{lema}\label{lem:dualA}
Let ${\bf A}_d^*$ be the $d$ dimensional dual polytope to the convex hull of the root system of type $A$. Then:
$$H_{{\bf A}_d^*}(m)=\sum_{i=0}^d {{d+1}\choose i}m^i
.$$
\end{lema}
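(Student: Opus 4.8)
The plan is to reduce the lattice-point count defining $H_{{\bf A}_d^*}$ to an elementary counting problem by choosing good coordinates. The first move is to rewrite the claimed answer in closed form: by the binomial theorem
\[
\sum_{i=0}^d \binom{d+1}{i} m^i = (m+1)^{d+1} - m^{d+1},
\]
and the shape of the right-hand side already suggests a model in which one counts tuples in a $(d+1)$-dimensional cube and subtracts a forbidden sub-cube.

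Next I would write down the polar dual explicitly. Recall from Example \ref{typeA} that ${\bf A}_d$ is the convex hull of the vectors $\pm(\eb_i + \eb_{i+1} + \cdots + \eb_j)$ for $1 \leq i \leq j \leq d$. Since an inequality $\langle x, v\rangle \leq 1$ need only be imposed for $v$ ranging over a generating set of the primal polytope, the dual is the ``interval polytope''
\[
{\bf A}_d^* = \{x \in \RR^d : |x_i + x_{i+1} + \cdots + x_j| \leq 1 \text{ for all } 1 \leq i \leq j \leq d\}.
\]
Introducing the partial sums $s_0 = 0$ and $s_k = x_1 + \cdots + x_k$ for $1 \leq k \leq d$, every consecutive sum is a difference $x_i + \cdots + x_j = s_j - s_{i-1}$, so the defining inequalities become precisely $|s_a - s_b| \leq 1$ for all $0 \leq a < b \leq d$. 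The map $(x_1, \ldots, x_d) \mapsto (s_1, \ldots, s_d)$ is lower-triangular with $1$'s on the diagonal, hence unimodular, so it preserves lattice-point counts; thus $H_{{\bf A}_d^*}(m)$ equals the number of integer tuples $(s_1, \ldots, s_d) \in \ZZ^d$ for which, setting $s_0 = 0$, all the values $s_0, s_1, \ldots, s_d$ lie in a common window of length $m$, i.e.\ $\max_{0 \leq k \leq d} s_k - \min_{0 \leq k \leq d} s_k \leq m$.

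The final step is an explicit bijection. I would send such a tuple to $(t_0, t_1, \ldots, t_d)$ with $t_k = s_k - \min_{0 \leq j \leq d} s_j$; this lands in $\{0, 1, \ldots, m\}^{d+1}$ and satisfies $\min_k t_k = 0$, while the assignment is invertible because $s_0 = 0$ pins down the shift (from $t$ one recovers $s_k = t_k - t_0$). Hence $H_{{\bf A}_d^*}(m)$ is the number of tuples in $\{0, \ldots, m\}^{d+1}$ whose minimum entry is $0$, which is the total number $(m+1)^{d+1}$ minus those with all entries in $\{1, \ldots, m\}$, namely $m^{d+1}$. This gives $H_{{\bf A}_d^*}(m) = (m+1)^{d+1} - m^{d+1}$, as desired.

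The routine calculations are genuinely routine here; the only points requiring care are verifying that the polar-dual inequalities are exactly the interval constraints (which follows from working with a generating set rather than the vertices) and checking that the shift-by-minimum map is an honest bijection onto the ``minimum zero'' tuples. The main conceptual step — and the only place where one could go wrong — is the passage to partial-sum coordinates, which simultaneously turns the constraint into a clean window condition and keeps the lattice intact; once that substitution is in place, the count is immediate.
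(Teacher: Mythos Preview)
Your proof is correct and follows essentially the same route as the paper: both pass to partial sums $s_k=x_1+\cdots+x_k$ (with $s_0=0$) to turn the dual-polytope constraints into the window condition $\max s_k-\min s_k\le m$. The only difference is in the final count: you biject directly to tuples in $\{0,\dots,m\}^{d+1}$ with minimum entry $0$, giving $(m+1)^{d+1}-m^{d+1}$ immediately, whereas the paper instead computes the cumulative sum $\sum_{k=0}^{m}H_{{\bf A}_d^*}(k)=(m+1)^{d+1}$ by a double-counting argument over intervals $[a,a+m]$ containing $0$ and then differences; your finish is slightly more direct, but the substance is the same.
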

\begin{proof}
It is enough to prove that $f(m):=\sum_{k=1}^m H_{{\bf A}_d^*}(k)=(m+1)^{d+1}$. 
Consider a transformation:
$$t:\Z^{d}\ni(a_1,\dots,a_d)\rightarrow (0,a_1,a_1+a_2,\dots,a_1+a_2+\dots+a_d)\in \{0\}\times \Z^{d}.$$
An integral point $a$ belongs to $k{\bf A}_d^*$ if and only if any two coordinates of $t(a)$ differ (in absolute value) by at most $k$, or put differently, the coordinates of $t(a)$ belong to an interval $[a,a+k]$ for some $a\in \Z$. Notice that coordinates of $t(a)$ belongs to an interval of length $b$ if and only if they belong to $k-b+1$ intervals of type $[a,a+k]$ for different $a\in \Z$. Thus, summing as multisets all integral points in $t(k{\bf A}_d^*)$ for $k=0,\dots,m$ we enumerate integral sequences $(0,c_1,c_2,\dots,c_d)$, each one counted that many times as many intervals $[a,a+m]$ contain all $c_i$'s. Hence, by double counting and looking at all possible intervals we obtain:
$$f(m)=\sum_{a=-m}^0 (m+1)^d=(m+1)^{d+1}.$$ 
\end{proof}
\begin{cor}
The Ehrhart polynomial of the dual polytope ${\bf A}_d^*$ to the convex hull of the root system of type $A$ has all roots on $R$. The roots of $H_{{\bf A}_d^*}$ and $H_{{\bf A}_{d+1}^*}$ interlace on $R$.
\end{cor}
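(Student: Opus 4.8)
The plan is to exploit the explicit formula from Lemma \ref{lem:dualA} by first recognizing a closed form. Since $\sum_{i=0}^{d+1}\binom{d+1}{i}m^i=(m+1)^{d+1}$ and the top term is exactly $m^{d+1}$, one has
$$H_{{\bf A}_d^*}(m)=(m+1)^{d+1}-m^{d+1}.$$
This reduces the problem of finding the roots to solving $(m+1)^{d+1}=m^{d+1}$, a purely elementary task.

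First I would locate the roots. Dividing by $m^{d+1}$ (note $m=0$ is not a root), a number $m$ is a root if and only if $\frac{m+1}{m}=\zeta$ for some $(d+1)$st root of unity $\zeta\neq 1$, i.e. $m=\frac{1}{\zeta-1}$. Writing $\zeta=e^{2\pi \textnormal{i} k/(d+1)}$ for $k=1,\dots,d$ and simplifying $\frac{1}{\zeta-1}$ by factoring $e^{\textnormal{i}\pi k/(d+1)}$ out of $\zeta-1$, one obtains
$$m_k=-\tfrac{1}{2}-\tfrac{\textnormal{i}}{2}\cot\!\left(\tfrac{\pi k}{d+1}\right),\qquad k=1,\dots,d.$$
In particular every root has real part $-\frac12$, so all roots lie on $R$; and since $\cot$ is strictly decreasing and injective on $(0,\pi)$, the $d$ roots are distinct and their imaginary parts $t_k=-\frac12\cot(\pi k/(d+1))$ are strictly increasing in $k$. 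This settles the first assertion.

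For the interlacing statement I would work entirely with the imaginary parts. The larger-degree polynomial is $H_{{\bf A}_{d+1}^*}$ (degree $d+1$), with imaginary parts $-\frac12\cot(\pi k/(d+2))$ for $k=1,\dots,d+1$, and it should be $R$-interlaced by $H_{{\bf A}_d^*}$. Because $-\cot$ is increasing on $(0,\pi)$, the required ordering of imaginary parts is equivalent to the ordering of the angles $\frac{\pi k}{d+2}$ and $\frac{\pi k}{d+1}$. Thus the whole claim reduces to the elementary chain of inequalities
$$\frac{k}{d+2}\le\frac{k}{d+1}\le\frac{k+1}{d+2}\qquad(1\le k\le d),$$
the left inequality being trivial and the right one equivalent to $k\le d+1$.

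The computation is elementary throughout, so there is no serious obstacle; the only point demanding care is the bookkeeping. One must match the labelling convention of the interlacing definition (the degree-$(d+1)$ polynomial plays the role of $f$, so its roots sit on the outside of the interleaving), check that the $\cot$ arguments all lie in $(0,\pi)$ so that the imaginary parts are genuinely real and pairwise distinct, and confirm that the number of roots matches each degree. Once these are aligned, the inequalities above give exactly $t_1\le s_1\le t_2\le\cdots\le s_d\le t_{d+1}$, which is precisely the definition of $R$-interlacing.
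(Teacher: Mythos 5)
Your proof is correct and follows essentially the same route as the paper: both start from the closed form $H_{{\bf A}_d^*}(m)=(m+1)^{d+1}-m^{d+1}$, identify the roots as $1/(\zeta-1)$ for the non-trivial $(d+1)$st roots of unity $\zeta$, and deduce both claims from how roots of unity of consecutive orders sit on the unit circle. If anything, your explicit parametrization $m_k=-\frac{1}{2}-\frac{\mathrm{i}}{2}\cot\left(\frac{\pi k}{d+1}\right)$ and the reduction of interlacing to $\frac{k}{d+2}\le\frac{k}{d+1}\le\frac{k+1}{d+2}$ make fully precise the step the paper treats informally, namely that inversion carries the interlacing of roots of unity on the circle to interlacing on the line $R$.
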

\begin{proof}
By Lemma \ref{lem:dualA} we have $H_{{\bf A}_d^*}(m)=(m+1)^{d+1}-m^{d+1}$. Thus the roots of $H_{{\bf A}_d^*}$ are the inverses of the (nonzero) $(d+1)$st roots of unity shifted by $-1$. The line $R$ is the inverse of the circle of radius one centered at $-1$. The corollary follows, as $d$th and $(d+1)$st roots of unity 'interlace' on the unit circle. 
\end{proof}
We believe that further nice results will be obtained in future for the associahedra and duals of graph polytopes. 
\section{Appendix}\label{Appendix}
We show how to efficiently determine recursive relations on the (most complicated) example (\ref{recH3n}).
\begin{verbatim}
In: Num:=Numerator[Simplify[(a*t*D[H2[k+1,t],t]
+b*H2[k +1,t]+c*H2[k,t]+d*H1[k+1,t])/(H3[k+1,t])]]
In: Num/.t->0
Out: b+c+d
In: (Simplify[(Num-(b+c+d))/t])/.t->0
Out: -2c-b(-2-2k)+2ck+a(5+4k)
In: (Simplify[(((Num-(b+c+d))/t)-(-2c-b(-2-2k)+2ck+a(5+4k)))/t])/.t->0
Out: -2d+c(2-4k)+a(11+10k+4k^2)
In: (Simplify[(((((Num-(b+c+d))/t)-(-2c-b(-2-2k)+2ck+a(5+4k)))/t)-
(-2d+c(2-4k)+a(11+10k+4k^2)))/t])/.t->0
Out: -2c+2ck-b(2+2k)+a(7+6k)
In: Denominator[Simplify[(a*t*D[H2[k+1,t],t]+b*H2[k+1,t]
+c*H2[k,t]+d*H1[k+1,t])/(H3[k+1,t])]]
Out: 1+4kt+4kt^3+t^4+3t^2-kt^2+3k^2t^2
In: Solve[{b+c+d==1,-2c-b(-2-2k)+2ck+a(5+4k)==4k+4,-2d+c(2-4k)+a(11+10k+4k^2)
==6+5k+3*k^2,-2c+2ck-b(2+2k)+a(7+6k)==4k+4,a+d+c-b==1},{a,b,c,d}]
Out:{{a->-((-16-13k-3k^2)/(4(6+5k+k^2))),b->-((-16-13k-3k^2)/(8(6+5k+k^2))),
c->-((-18k-13k^2-k^3)/(8(-1+k)(6+5k+k^2))),
d->-((32+13k-9k^2-4k^3)/(8(-1+k)(6+5k+k^2)))}}
\end{verbatim}
\bibliographystyle{amsalpha}
\bibliography{Xbib}
\end{document}